\documentclass[11pt, reqno]{amsart}
\usepackage{latexsym}		      
\usepackage{amssymb}                  
\usepackage{bm}             
\usepackage{version}
\usepackage{psfrag,graphicx}
\textheight=215mm
\textwidth=150mm
\pagestyle{myheadings}
\oddsidemargin=0 in
\evensidemargin=0 in

\newtheorem{Theorem}{Theorem}[section] 
\newtheorem{Lemma}[Theorem]{Lemma} 
\newtheorem{Corollary}[Theorem]{Corollary}

\newtheorem{Definition}[Theorem]{Definition}
\newtheorem{Remark}[Theorem]{Remark}
\newtheorem{Conjecture}[Theorem]{Conjecture}

\newcommand{\T}{\mathbb{T}}
\newcommand{\D}{\mathbb{D}}
\newcommand{\E}{\mathbb{E}}
\newcommand{\la}{\lambda}
\newcommand{\al}{\alpha}
\newcommand{\ph}{\varphi}
\newcommand{\ep}{\varepsilon}
\newcommand{\up}{\upsilon}
\newcommand{\C}{\mathbb{C}}
\newcommand{\R}{\mathbb{R}}
\newcommand{\nn}{\nonumber}
\newcommand{\ds}{\displaystyle}
\newcommand{\dia}{\diamond}
\newcommand{\dE}{b\E}
\renewcommand{\S}{\mathcal{S}}
\providecommand{\Aut}{\mathop{\rm Aut}\nolimits}
\providecommand{\diag}{\mathop{\rm diag}\nolimits}
\newcommand\nin{\noindent}
\newcommand{\ip}[2]{\left\langle #1, #2 \right\rangle}
\numberwithin{equation}{section}

\begin{document}

\author{A. A. Abouhajar, M. C. White and N. J. Young }

\title[A Schwarz lemma]
{A Schwarz lemma for a domain related to mu-synthesis}

\subjclass{30C80, 32F45, 93D21}

\keywords{Interpolation, invariant distance, automorphism, distinguished boundary, structured Nevanlinna-Pick problem, robust stabilization}

\begin{abstract}
We prove a Schwarz lemma for a domain  $\E$ in $\C^3$ that arises in connection with a problem in $H^\infty$ control theory.  We describe a class of automorphisms of $\E$ and determine the distinguished boundary of $\E$.  We apply our Schwarz lemma to a special case of the $\mu$-synthesis problem.
\end{abstract}
\maketitle
\section{Introduction} \label{intro}
In this paper we study the complex geometry of a domain $\E \subset \C^3$ which is relevant to some problems of analytic interpolation that arise in control engineering.
  Our main result is a Schwarz lemma for $\E$, but we also identify a natural class of automorphisms of $\E$ and determine the distinguished boundary of $\E$.
\begin{Definition} \label{defE}
The {\em tetrablock} is the domain
\[
\E = \{ x \in \C^3: \quad 1-x_1 z - x_2 w + x_3 zw \neq 0 \mbox{  whenever }
|z| \le 1, |w| \le 1 \}.
\]
The closure of $\E$ is denoted by $\bar\E$.
\end{Definition}
$\E$ is a polynomially convex, non-convex domain, is starlike about $0$ and intersects $\R^3$ in a regular tetrahedron (Theorems \ref{starlike}, \ref {EcapR3} and \ref{polyconvex}).  To a first approximation one can think of $\E$ as the set of linear fractional maps $(x_3 z - x_1)/(x_2 z-1)$ that map the closed unit disc $\Delta$ into the open unit disc $\D$, but this viewpoint, though useful, must be interpreted with care since it does not capture the case that $x_1x_2=x_3$: see Theorem \ref{characE} for a precise statement.

Here is our main result.   To cut down on subscripts we write the typical point of $\E$ as $(a,b,p)$.
\begin{Theorem} \label{schwarzL}
Let $\la_0 \in \D\setminus \{0\}$ and let $x=(a,b,p) \in \E$.  The following conditions are equivalent:
\begin{enumerate}
\item[\rm(1)] there exists an analytic function $\ph: \D \to \bar\E$ such that
$\ph(0)=(0,0,0)$ and $\ph(\la_0) = x$;
\item[\rm(1$^\prime$)] there exists an analytic function $\ph: \D \to \E$ such that
$\ph(0)=(0,0,0)$ and $\ph(\la_0) = x$;
\item[\rm(2)]
\[
\max \left\{ \frac{|a-\bar b p|+ |ab-p|}{1-|b|^2}, 
\frac{|b-\bar a p|+|ab-p|}{1-|a|^2} \right\} \leq |\la_0|;
\]
\item[\rm(3)] either $|b| \leq |a|$ and 
\[
\frac{|a-\bar b p|+ |ab-p|}{1-|b|^2} \leq |\la_0|
\]
or $|a| \leq |b|$ and
\[
\frac{|b-\bar a p|+|ab-p|}{1-|a|^2} \leq |\la_0|;
\]
\item[\rm(4)] there exists a $2\times 2$ function $F$ in the Schur class such that
\[
F(0) = \left[ \begin{array}{cc} 0 & * \\ 0 & 0 \end{array} \right] \mbox{ and }
F(\la_0) = A = [a_{ij}]
\]
where $x = (a_{11},a_{22}, \det A).$
\end{enumerate}
\end{Theorem}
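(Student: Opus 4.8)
The plan is to establish the equivalences as a single cycle,
\[
(4)\Rightarrow(1')\Rightarrow(1)\Rightarrow(2)\Leftrightarrow(3)\Rightarrow(4),
\]
driven by two explicit devices. The first is a pair of Möbius functions: for $z$ in the closed disc and $x=(a,b,p)$ set
\[
\Upsilon_1(z,x)=\frac{a-pz}{1-bz},\qquad \Upsilon_2(z,x)=\frac{b-pz}{1-az}.
\]
By Theorem~\ref{characE}, membership $x\in\bar\E$ is governed by $\sup_{|z|\le1}|\Upsilon_i(z,x)|\le1$ for $i=1,2$ (and $x\in\E$ by the strict inequalities, which force $|a|,|b|<1$). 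The point is the maximum-modulus identity, valid for $|b|<1$,
\[
\sup_{|z|\le1}\left|\frac{a-pz}{1-bz}\right|=\frac{|a-\bar b p|+|ab-p|}{1-|b|^2},
\]
together with its $a\leftrightarrow b$ partner; these turn the two quotients in (2) into Carathéodory-type extremal quantities. The second device is the determinant identity $1-az-bw+p\,zw=\det\bigl(I-A\,\diag(z,w)\bigr)$ whenever $a=a_{11}$, $b=a_{22}$, $p=\det A$, which is the bridge between $\bar\E$ and the ($\mu$-synthesis) class of $2\times2$ contractions in (4).

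The easy arcs come first. For $(4)\Rightarrow(1')\Rightarrow(1)$: given the Schur function $F$ of (4), put $\ph=(F_{11},F_{22},\det F)$. Since each $F(\la)$ is a contraction, the determinant identity gives $\ph(\la)\in\bar\E$; the strict upper-triangularity of $F(0)$ forces $\ph(0)=(0,0,0)$, and $F(\la_0)=A$ gives $\ph(\la_0)=x$, which already yields (1). To land in the open set $\E$ one uses that $\E$ is starlike about $0$ (Theorem~\ref{starlike}) and that, away from the extremal case, $F$ may be taken strictly contractive on $\D$; the extremal case is recovered by a normal-families limit. The inclusion $\E\subset\bar\E$ gives $(1')\Rightarrow(1)$. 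For $(1)\Rightarrow(2)$: for fixed $z$ with $|z|<1$ the map $\la\mapsto\Upsilon_i(z,\ph(\la))$ is analytic from $\D$ into $\bar\D$ and vanishes at $0$ because $\ph(0)=0$, so the classical Schwarz lemma gives $|\Upsilon_i(z,x)|\le|\la_0|$; letting $z$ range and invoking the maximum-modulus identity produces exactly the two quotient bounds, hence (2).

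The equivalence $(2)\Leftrightarrow(3)$ is elementary real analysis. One direction is immediate, since (2) retains, in each case $|b|\le|a|$ or $|a|\le|b|$, the relevant quotient. For the converse one shows that when $|b|\le|a|$ the first quotient dominates the second; the main lever is the identity $|a-\bar b p|^2-|b-\bar a p|^2=(|a|^2-|b|^2)(1-|p|^2)$, which together with the membership bounds for $\bar\E$ from Theorem~\ref{characE} reduces matters to a one-variable inequality. The heart of the theorem is $(3)\Rightarrow(4)$. Writing $A=[a_{ij}]$ with $a_{11}=a$, $a_{22}=b$ and $a_{12}a_{21}=ab-p$, condition (4) asks for a Schur interpolant of the two data $F(0)$ (strictly upper triangular) and $F(\la_0)=A$. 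By the two-point matrix Nevanlinna--Pick / commutant-lifting theorem this is solvable iff the associated Pick matrix is positive semidefinite, and the free parameters — the splitting of $ab-p$ into $a_{12},a_{21}$ (up to the scaling $A\mapsto\diag(\rho,\rho^{-1})A\diag(\rho^{-1},\rho)$) and the off-diagonal entry of $F(0)$ — must be chosen so that, after a Schur-complement reduction, positivity becomes precisely the scalar inequality of (3). Carrying out this optimization and matching the outcome to $(|a-\bar b p|+|ab-p|)/(1-|b|^2)\le|\la_0|$ (respectively its partner) is the main obstacle; the singular (extremal) case of the Pick matrix is exactly the situation calling for the limiting argument used above in passing from $\bar\E$ to $\E$, and the degenerate locus $ab=p$ must be checked separately since there the Möbius description of $\E$ is not faithful.
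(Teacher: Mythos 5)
Your skeleton is the same as the paper's: a cycle through the five conditions, the two M\"obius functions (your $\Upsilon_1,\Upsilon_2$ are precisely the paper's $\Psi$ and $\Upsilon$ after multiplying numerator and denominator by $-1$), the classical Schwarz lemma for $(1)\Rightarrow(2)$, and the determinant identity linking $\bar\E$ to $2\times 2$ contractions for $(4)\Rightarrow(1)$; those arcs are sound. The genuine gap is at the point you yourself label ``the main obstacle'': the implication $(3)\Rightarrow(4)$ is never proved, only reformulated. Saying that the free parameters (the splitting $a_{12}a_{21}=ab-p$ and the unspecified entry of $F(0)$) ``must be chosen so that, after a Schur-complement reduction, positivity becomes precisely the scalar inequality of (3)'' states the goal, not an argument, and it is exactly here that all of the paper's technical work lives: the substitution $F(\la)=G(\la)\diag(\la,1)$, which converts (4) into finding $G\in\S_{2\times 2}$ with $G(\la_0)=Z=\left[\begin{smallmatrix} a/\la_0 & w\\ w & b\end{smallmatrix}\right]$, $w^2=(ab-p)/\la_0$, and only the single entry $[G(0)]_{22}=0$ prescribed; the criterion (Lemmas \ref{anX} and \ref{22zero}) that such $G$ exists if and only if $\det M(|\la_0|)\le 0$; the verification that $\|Z\|<1$ (Lemma \ref{normZlt1}); and the explicit factorization $\det\bigl(M(|\la_0|)\det(1-Z^*Z)\bigr)=-(y-y_1)(y-y_2)$ of Lemma \ref{gotdetM}, combined with the membership inequalities $Y_1,Y_2\ge 2$. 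Note moreover a structural issue your plan never confronts: negativity of the relevant determinant forces \emph{both} quotients of condition (2) to be at most $|\la_0|$, whereas (3) hands you only one of them together with $|b|\le|a|$; passing from one quotient bound to both (essentially Corollary \ref{whichD}) requires its own argument, and without it no choice of your free parameters can be shown to work. Until these computations (or some substitute) are supplied, the theorem's core is simply not established.

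A second, more local, defect: your route $(4)\Rightarrow(1')$ does not work as described. A normal-families limit of Schur functions yields a map into $\bar\E$, never into the open set $\E$, and starlikeness does not help because replacing $\ph$ by $r\ph$ destroys the condition $\ph(\la_0)=x$. What is needed---and what the paper uses---is Lemma \ref{ifsomethenall}: any analytic $\ph:\D\to\bar\E$ with $\ph(\la_0)\in\E$ automatically maps all of $\D$ into $\E$, by a Schwarz--Pick argument applied to $\ph_2$ and to $\la\mapsto\Psi(z,\ph(\la))$. On the positive side, your proposed direct proof of $(3)\Rightarrow(2)$, which the paper obtains only by travelling around the cycle, is genuinely workable: your identity $|a-\bar bp|^2-|b-\bar ap|^2=(|a|^2-|b|^2)(1-|p|^2)$ reduces the claim to $(|a-\bar bp|+|b-\bar ap|)(|b-\bar ap|+|ab-p|)\le(1-|p|^2)(1-|a|^2)$, which is the product of conditions (6) and (3$'$) of Theorem \ref{characE}. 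But since the cycle needs only the trivial direction $(2)\Rightarrow(3)$, this nice observation does not compensate for the missing heart of the proof.
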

Recall that the {\em Schur class} (of type $m \times n$) is the set of analytic functions $F$ on $\D$ with values in the space $\C^{m\times n}$ of complex $m\times n $ matrices such that $||F(\la)|| \le 1$ for all $\la \in \D$; here and elsewhere $||.||$ denotes the usual operator norm (the largest singular value) of a matrix.

The starting point of our research was a certain special case of the {\em $\mu$-synthesis problem}, which arises in the $H^\infty$ approach to the problem of robust control 
\cite{doyle, DP}.   Perhaps the most appealing still unsolved instance of $\mu$-synthesis is the spectral Nevanlinna-Pick problem (to construct an analytic square-matrix valued function on the disc subject to interpolation conditions and a bound on the spectral radius, \cite{BFT1, AY1}). 
In earlier work \cite{AY1,AY3} some progress was made on the  $2\times 2$ spectral Nevanlinna-Pick problem via the analysis of a domain in $\C^2$ known as the symmetrised bidisc.  It transpired that this domain and its higher dimensional analogues have a rich geometry and function theory, of interest independently of their connections with engineering (for example \cite{AY4, cos,EZ,PZ,bharali} among others).  In an analogous way the study of the ``next'' special case of $\mu$-synthesis for $2\times 2$ matrix functions led us to  analyse the tetrablock.    In Section \ref{mu} we explain the connection between $\E$ and $\mu$-synthesis and give an application of our Schwarz lemma.  Note the interesting fact that $\mu$-synthesis problems can be ill conditioned (Remark \ref{muschw}(iv)).

In Section \ref{charact} we give a variety of characterizations of the open and closed tetrablocks and present some basic geometric properties of $\E$.  In Section \ref{schwarz} we prove Theorem \ref{schwarzL} and deduce a formula for the Carath\'eodory and Kobayashi distances of a general point of $\E$ from the origin.  In Section \ref{uniq}   we  show that there is no uniqueness statement for the extremal case and
in Section \ref{all} we describe all solutions of a Schwarz-type 2-point interpolation problem for $\E$.
In Section \ref{autos} we identify a rich class of automorphisms of $\E$.  In Section \ref{shilov} we calculate the distinguished boundary of $\E$.   In Section \ref{retract} we pose the question as to whether $\E$ is an analytic retract of a certain convex domain and prove a partial negative result.

 We write $\T$ for the unit circle in $\C$.  As usual, $H^\infty$ denotes the Banach space of bounded analytic functions on $\D$ with supremum norm.  An {\em automorphism} of a domain $\Omega$ is a biholomorphic self-map of $\Omega$; the automorphism group of  $\Omega$ will be denoted by $\Aut \Omega$.    We denote by $\S_{m\times n}$ the class (slightly smaller than the Schur class) of analytic functions $F:\D \to \C^{m\times n}$ such that $||F(\la)|| < 1$ for all $\la \in \D$.

For $Z \in \C^{m\times n}$ such that $||Z|| < 1$ we denote by $\mathcal{M}_Z$ the matricial M\"obius transformation defined for contractive $X\in \C^{m\times n}$ by
\[
\mathcal{M}_Z(X) = (1-ZZ^*)^{-\tfrac 12}(X-Z)(1-Z^*X)^{-1}(1-Z^*Z)^{\tfrac 12}.
\]
Recall that $\mathcal{M}_Z^{-1} = \mathcal{M}_{-Z}$ as self-mappings of the closed unit ball of $\C^{m\times n}$.
We shall denote the $(i,j)$ entry of a matrix $A$ by $[A]_{ij}$.

This paper is based on the first-named author's Ph.D. thesis \cite{alaa}.

\section{Characterization of the tetrablock}  \label{charact}
The following rational functions of $4$ variables play a central role in the study of 
$\E$.
\begin{Definition} \label {defPsi}
For $z \in \C$ and $x=(x_1,x_2,x_3) \in \C^3$ we define
\begin{eqnarray}
\Psi(z,x) &=& \frac{x_3 z - x_1}{x_2z - 1}, \label{dPsi}\\
\Upsilon(z,x) &=& \Psi(z,x_2,x_1,x_3) = \frac{x_3z-x_2}{x_1z-1}, \label{dUps}\\
 D(x) &=& \sup_{z \in \D} | \Psi(z,x)| = || \Psi(.,x)||_{H^\infty}. \label{dD}
\end{eqnarray}
\end{Definition}
We interpret $\Psi(.,x)$ to be the constant function equal to $x_1$ in the event that
$x_1 x_2 = x_3$; thus $\Psi(z,x)$ is defined when $zx_2 \ne 1$ or $x_1 x_2 = x_3$.
The quantity $D(x)$ is finite (and $\Psi(.,x) \in H^\infty$) if and only if either $x_2 \in \D$ or $x_1x_2=x_3$.
Indeed, for $x_2 \in \D$, the linear fractional function $\Psi(.,x)$ maps $\D$ to
the open disc with centre and radius
\begin{equation}  \label{imPsi}
\frac{x_1-\bar x_2 x_3}{1-|x_2|^2}, \quad  \frac{|x_1 x_2 - x_3|}{1-|x_2|^2}
\end{equation}
respectively.  Hence
\begin{equation}  \label{formD}
D(x) = \left \{ \begin{array}{ll} \displaystyle \frac {|x_1 - \bar x_2 x_3| + |x_1 x_2 - x_3|}
{1-|x_2|^2} & \mbox{ if $|x_2| < 1$} \\
|x_1| & \mbox{ if $x_1x_2=x_3$}\\
\infty & \mbox { otherwise.} \end{array}
\right .
\end{equation} 
Similarly, if $x_1 \in \D$, $\Upsilon(.,x)$ maps $\D$ to the open disc with 
centre and radius 
\[
\frac{x_2 - \bar x_1 x_3}{1-|x_1|^2}, \quad  \frac{|x_1 x_2 - x_3|}{1-|x_1|^2}
\]
respectively.
\begin{Theorem} \label{characE}
For $x \in \C^3$ the following are equivalent.
\begin{enumerate}
\item[\rm(1)] $x \in \E;$
\item[\rm(2)] $||\Psi(.,x)||_{H^\infty} < 1$ and if $x_1 x_2=x_3$ then, in addition,
$|x_2| <1$;
\item[\rm(2$^\prime$)] $||\Upsilon(.,x)||_{H^\infty} < 1 $ and if $x_1 x_2=x_3$ then, in addition,
$|x_1| <1$;
\item[\rm (3)] $|x_1 - \bar x_2 x_3| + |x_1 x_2 - x_3| < 1-|x_2|^2$;
\item[\rm (3$^\prime$)] $|x_2 - \bar x_1 x_3| + |x_1 x_2 - x_3| < 1-|x_1|^2$;
\item[\rm (4)] $|x_1|^2 - |x_2|^2 + |x_3|^2 +2|x_2 - \bar x_1 x_3| <1$ and $|x_2| < 1$;
\item[\rm (4$^\prime$)] $- |x_1|^2 + |x_2|^2 + |x_3|^2 +2|x_1 - \bar x_2x_3| <1$ and $|x_1| < 1$;
\item[\rm(5)] $|x_1|^2 + |x_2|^2 - |x_3|^2 + 2|x_1x_2 - x_3| < 1$ 
and $|x_3| < 1$;
\item[\rm(6)] $|x_1 - \bar x_2x_3| + |x_2 - \bar x_1 x_3| <1 - |x_3|^2$;
\item[\rm(7)] there exists a $2 \times 2$ matrix $A=[a_{ij}]$ such that $||A|| <1$  and $x=(a_{11}, a_{22}, \det A)$;
\item[\rm(8)] there exists a symmetric $2 \times 2$ matrix $A=[a_{ij}]$ such that $||A|| <1$  and $x=(a_{11}, a_{22}, \det A)$;
\item[\rm(9)] $|x_3| < 1$ and there exist $\beta_1, \beta_2 \in \C$ such that
$|\beta_1| + |\beta_2| < 1$  and
\[
x_1=\beta_1 + \bar\beta_2 x_3, \quad x_2 = \beta_2+\bar\beta_1 x_3.
\]
\end{enumerate}
\end{Theorem}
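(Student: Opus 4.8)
The plan is to prove the nine statements equivalent by a connected web of implications, leaning throughout on the symmetry $x_1\leftrightarrow x_2$: interchanging $z$ and $w$ fixes the polynomial $1-x_1z-x_2w+x_3zw$ while swapping $x_1$ with $x_2$, so it carries $\E$ to itself, interchanges $\Psi$ with $\Upsilon$, and pairs each unprimed condition with its primed companion; hence it suffices to treat one of each pair. Before anything else I would clear the degenerate locus $x_1x_2=x_3$: there $1-x_1z-x_2w+x_3zw=(1-x_1z)(1-x_2w)$, so $x\in\E$ iff $|x_1|<1$ and $|x_2|<1$, and each of (2)--(9), read with its side clause, collapses to exactly this, so every later argument may assume $x_1x_2\neq x_3$. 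For (1)$\iff$(2) I would write $1-x_1z-x_2w+x_3zw=(1-x_2w)-z(x_1-x_3w)$ and note that, for fixed $w\in\bar\D$, this vanishes for some $z\in\bar\D$ exactly when $|x_1-x_3w|\ge|1-x_2w|$, that is when $|\Psi(w,x)|\ge1$. Thus $x\in\E$ iff $|x_1-x_3w|<|1-x_2w|$ for all $w\in\bar\D$; evaluating at $w=1/x_2$ forces $|x_2|<1$, after which $\Psi(\cdot,x)$ is analytic on $\bar\D$ and the condition becomes $\|\Psi(\cdot,x)\|_{H^\infty}<1$ by the maximum principle. The step (2)$\iff$(3) is then read off the image-disc description \eqref{imPsi} and the formula \eqref{formD} for $D(x)$, the right-hand side of (3) being nonnegative and so automatically forcing $1-|x_2|^2>0$.

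The purely algebraic conditions (3),(4),(4'),(5),(6) I would interrelate through three elementary identities,
\begin{align}
|x_1-\bar x_2x_3|^2-|x_1x_2-x_3|^2 &= (1-|x_2|^2)(|x_1|^2-|x_3|^2),\nn\\
|x_2-\bar x_1x_3|^2-|x_1x_2-x_3|^2 &= (1-|x_1|^2)(|x_2|^2-|x_3|^2),\nn\\
|x_1-\bar x_2x_3|^2-|x_2-\bar x_1x_3|^2 &= (|x_1|^2-|x_2|^2)(1-|x_3|^2).\nn
\end{align}
Each inequality has the form $A+B<C$ with $A,B,C\ge0$; isolating $A$, squaring, and substituting the matching identity turns it into a comparison $A^2<(C-B)^2$, i.e.\ $A<|C-B|$. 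The subtlety is that squaring is reversible only on the branch $C>B$, so equivalence with the original inequality $A<C-B$ holds precisely when the relevant side condition ($|x_2|<1$, $|x_1|<1$ or $|x_3|<1$) is in force; the extra clauses in (4),(4'),(5) are exactly what pin down the correct branch. I would use the identities to pass (3)$\to$(5),(4') and (6)$\to$(4), with companions following by symmetry, and recover the reverse implications from the modulus bounds that hold on $\E$. For (6)$\iff$(9) I note that when $|x_3|<1$ the linear system $x_1=\beta_1+\bar\beta_2x_3$, $x_2=\beta_2+\bar\beta_1x_3$ has the unique solution $\beta_1=(x_1-\bar x_2x_3)/(1-|x_3|^2)$, $\beta_2=(x_2-\bar x_1x_3)/(1-|x_3|^2)$, and $|\beta_1|+|\beta_2|<1$ is exactly (6).

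For the matricial conditions I would use the determinant identity $\det\bigl(I-A\,\diag(z,w)\bigr)=1-a_{11}z-a_{22}w+(\det A)zw$. If $\|A\|<1$ then $\|A\,\diag(z,w)\|<1$ on the closed bidisc, so the left side never vanishes and (7)$\Rightarrow$(1). For the converse I would realise $x$ by minimising $|a_{12}|^2+|a_{21}|^2$ subject to $a_{12}a_{21}=x_1x_2-x_3$ (which forces $\det A=x_3$); the arithmetic--geometric mean inequality gives minimum $2|x_1x_2-x_3|$, attained at the symmetric choice $a_{12}=a_{21}=\sqrt{x_1x_2-x_3}$, yielding (8) and hence (7). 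Here I would be careful that $\|A\|<1$ is equivalent to $I-A^*A$ being positive definite, which for a $2\times2$ Hermitian matrix means both its trace and its determinant are positive: the determinant condition reproduces the inequality of (5), but the trace condition $|x_1|^2+|x_2|^2+2|x_1x_2-x_3|<2$ is a genuinely separate requirement, secured by $|x_3|<1$.

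I expect the main obstacle to be precisely this bookkeeping of side conditions rather than any isolated computation. The squaring manipulations and the matrix test each reduce membership to a determinant-type inequality, but in both settings that inequality has a spurious ``anti-contractive'' solution branch---for instance $(0,0,x_3)$ with $1<|x_3|<2$ satisfies the inequality of (5) while lying outside $\E$---which must be excluded by the accompanying positivity conditions $|x_2|<1$, $|x_1|<1$, $|x_3|<1$ or the degenerate clauses. Establishing these bounds, rather than assuming them, and verifying that each condition's side clause cuts out exactly the right branch, is where the real work lies; dispatching the locus $x_1x_2=x_3$ once at the outset is what keeps the remaining implications clean.
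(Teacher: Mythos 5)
Your architecture largely tracks the paper's: you clear the triangular locus first, prove (1)$\Leftrightarrow$(2) by the same pencil argument (the paper phrases it as $1\notin z\Psi(\Delta,x)$, you as $|x_1-x_3w|<|1-x_2w|$), read (2)$\Leftrightarrow$(3) off the image disc, obtain (8) from the symmetric realization with off-diagonal entries $\sqrt{x_1x_2-x_3}$, and solve the linear system for (6)$\Leftrightarrow$(9). Your genuinely different moves are the three identities driving the block (3),(4),(4$^\prime$),(5),(6) --- the paper instead gets (4) from (2) by expanding $|x_3z-x_1|^2<|x_2z-1|^2$ over $z\in\T$, and gets (5) from (4$^\prime$) via the symmetry $(x_1,x_2,x_3)\mapsto(x_1,\bar x_3,\bar x_2)$ --- and your direct proof of (7)$\Rightarrow$(1) from $\det(I-A\diag(z,w))=1-a_{11}z-a_{22}w+(\det A)zw$, which is cleaner than the paper's detour (7)$\Rightarrow$(5)$\Rightarrow$(1).

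Nevertheless the proposal has a genuine gap, and it is one you found yourself and then argued away. Your point $(0,0,x_3)$ with $1<|x_3|<2$ is \emph{non-triangular}, so the side clause of (5), which is conditional on $x_1x_2=x_3$, says nothing about it: that point satisfies condition (5) exactly as stated (the inequality reads $|x_3|(2-|x_3|)<1$), yet it lies outside $\E$, since $1+x_3zw$ vanishes at $w=1$, $z=-1/x_3\in\D$; equivalently, no contraction $A$ can have $\pi(A)=(0,0,x_3)$, because that forces $|\det A|=|x_3|\le\|A\|^2<1$. Similarly $(0,3,1)$ satisfies (4) as stated ($0-9+1+6=-2<1$, non-triangular) but is outside $\E$, since $1-3w+zw$ vanishes at $(z,w)=(0,\tfrac13)$. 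So the reverse implications (4)$\Rightarrow$(1), (4$^\prime$)$\Rightarrow$(1), (5)$\Rightarrow$(1) that your web requires are not delicate bookkeeping --- they are false. Your two escape routes both fail: the ``extra clauses'' you invoke are vacuous off the triangular locus, and ``modulus bounds that hold on $\E$'' cannot be used when membership of $\E$ is the conclusion. The honest deduction from your own example is that the statement as written is incorrect; it needs $|x_2|<1$ in (4), $|x_1|<1$ in (4$^\prime$) and $|x_3|<1$ in (5) as \emph{unconditional} hypotheses, and with those added your squaring scheme does close. For what it is worth, the paper's own proof has the identical defect: ``on expanding and re-arranging'' shows only that the inequality in (4) is equivalent to $|x_3z-x_1|<|x_2z-1|$ for all $z\in\T$, which without $|x_2|<1$ does not control $\|\Psi(\cdot,x)\|_{H^\infty}$ (again $(0,3,1)$: the supremum over $\T$ is at most $\tfrac12$, but $\Psi(\cdot,x)$ has a pole at $\tfrac13$), and the chains (1)$\Leftrightarrow$(5)$\Rightarrow$(8) inherit the problem.
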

\begin{proof}
Consider the case that $x_1x_2=x_3$: conditions (1) to (8) (apart from (6)) easily reduce to the pair of statements $|x_1| < 1,  |x_2| < 1$. 
Hence we may suppose that $x_1x_2 \neq x_3$
for proof of the equivalence of statements (1) to (5), (7) and (8).  It is clear that $\E$ is symmetric in its first two variables: $(x_1,x_2,x_3) \in \E$ if and only if $(x_2,x_1,x_3) \in \E$.  Hence, if we show that (1) $\Leftrightarrow$ (2)  then it will follow also that (1) $\Leftrightarrow$ (2$^\prime$) since
$\Upsilon(.,x)=\Psi(.,x_2,x_1,x_3)$.  We shall prove 
\[
\begin{array}{ccccc}
(1)&\Leftrightarrow & (2) & \Leftrightarrow & (3)\\
     &         &  \Updownarrow &  & \\
      &	&   (4) &  &
\end{array}
\mbox{   and then   }
\begin{array}{ccccc}
(1) & \Leftrightarrow  & (5) & \Leftarrow & (7)\\
      &                    & \Downarrow &  \textrm{\rotatebox{38}{$\Rightarrow$}}& \\
	&			& (8)  & &
\end{array}
\mbox{ and } 
\begin{array}{ccc}
(1) & \Leftarrow  & (9) \\
 \Downarrow &  \textrm{\rotatebox{38}{$\Rightarrow$}} & \\
 (6) &  &
\end{array}
\]
and the equivalences ($n^\prime$) follow by symmetry.\\
(1)$\Leftrightarrow$(2)  Condition (1) is equivalent to 
\[
z(x_1 - x_3 w) \neq 1 - x_2 w \mbox { for all } z, w \in \Delta,
\]
that is, $|x_2| < 1$ and $ 1 \notin z\Psi(\Delta,x)$ for all $z \in \Delta$.
Hence (1) holds if and only if $\Psi(\Delta,x)$ does not meet the complement of $\D$,
which is so if and only if (2) holds.\\
(2)$\Leftrightarrow$(3)  By equation (\ref{formD}),
\[
||\Psi(.,x)||_{H^\infty} = D(x) = 
\frac{|x_1 - \bar x_2 x_3| +|x_1x_2-x_3|}{1-|x_2|^2},
\]
from which the equivalence is immediate.\\
(2)$\Leftrightarrow$(4)   By the maximum principle, (2) holds if and only if $|x_2| < 1$ and
\[
|x_3 z - x_1|^2 < |x_2 z - 1|^2  \mbox{ for all } z \in \T.
\]
On expanding and re-arranging we find that (2)$\Leftrightarrow$(4).\\
(1)$\Leftrightarrow$(5)  The left hand side of (4$^\prime$) is unchanged if $x_2,x_3$ are replaced by $\bar x_3, \bar x_2$ respectively. Hence $(x_1,x_2,x_3) \in \E$ if and only if
$(x_1,\bar x_3, \bar x_2) \in \E$, which, by the equivalence (1)$\Leftrightarrow$(4), is so if and only if (5) holds.

The following is a routine calculation.
\begin{Lemma} \label{AstarA}
If
\[
A=\left[ \begin{array}{cc} x_1 & b \\ c & x_2  \end{array} \right]
\]
where $bc = x_1x_2 - x_3$ then $\det A  = x_3$,
\begin{equation} \label{xcon}
1-A^*A = \left[ \begin{array}{cc} 1-|x_1|^2 - |c|^2 & -b\bar x_1 - \bar c x_2 \\
-\bar b x_1 - c \bar x_2 & 1 - |x_2|^2 - |b|^2 \end{array} \right]
\end{equation}
and
\begin{equation} \label{formdet}
\det(1 - A^*A) = 1 - |x_1|^2 - |x_2|^2 +|x_3|^2 -|b|^2 - |c|^2.
\end{equation}
\end{Lemma}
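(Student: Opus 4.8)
The plan is to verify the three assertions by direct computation, using the standard $2\times 2$ determinant identity to bypass any messy expansion. The first claim is immediate: since $\det A = x_1 x_2 - bc$ and the hypothesis gives $bc = x_1 x_2 - x_3$, we get $\det A = x_1 x_2 - (x_1 x_2 - x_3) = x_3$.

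Next, to obtain equation (\ref{xcon}) I would write out the conjugate transpose
\[
A^* = \left[ \begin{array}{cc} \bar x_1 & \bar c \\ \bar b & \bar x_2 \end{array} \right]
\]
and form the product $A^* A$ entry by entry. The diagonal entries come out to $|x_1|^2 + |c|^2$ and $|b|^2 + |x_2|^2$, while the off-diagonal entries are $b\bar x_1 + \bar c x_2$ and its conjugate $\bar b x_1 + c \bar x_2$. Subtracting $A^* A$ from the identity matrix then reproduces (\ref{xcon}) exactly.

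Finally, for (\ref{formdet}) the efficient route is the identity $\det(I - M) = 1 - \operatorname{tr} M + \det M$, valid for any $2\times 2$ matrix $M$, applied with $M = A^* A$. Here $\operatorname{tr}(A^* A) = |x_1|^2 + |x_2|^2 + |b|^2 + |c|^2$ (the squared Frobenius norm of $A$) and $\det(A^* A) = |\det A|^2 = |x_3|^2$ by the first part. Substituting yields
\[
\det(1 - A^* A) = 1 - (|x_1|^2 + |x_2|^2 + |b|^2 + |c|^2) + |x_3|^2,
\]
which is (\ref{formdet}). There is no genuine obstacle in this lemma; the only point worth flagging is that expanding the determinant of (\ref{xcon}) head-on would force one to simplify $|b\bar x_1 + \bar c x_2|^2$ against the product of the two diagonal terms, whereas the trace--determinant identity dispenses with that cancellation altogether.
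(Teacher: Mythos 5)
Your proposal is correct: the paper offers no written proof (it simply labels the lemma ``a routine calculation''), and your entry-by-entry verification of $A^*A$ together with the identity $\det(1-M)=1-\operatorname{tr}M+\det M$ applied to $M=A^*A$, using $\det(A^*A)=|\det A|^2=|x_3|^2$, is exactly the routine computation intended. Your trace--determinant shortcut is a mild streamlining of the head-on expansion, which would otherwise require recognizing $|x_1x_2|^2+|bc|^2-2\,\mathrm{Re}\bigl(bc\,\overline{x_1x_2}\bigr)=|x_1x_2-bc|^2=|x_3|^2$.
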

(5)$\Rightarrow$(8)$\Rightarrow$(7)$\Rightarrow$(5)
Suppose (5) holds.    Choose (either) $w$ such that $w^2=x_1x_2 - x_3$ and let
$A=\left[ \begin{array}{cc}x_1 & w\\ w & x_2 \end{array} \right]$.
Since
(5)$\Leftrightarrow$(1)$\Leftrightarrow$(4)$\Leftrightarrow$(4$^\prime$),
the diagonal entries of $1-A^*A$ are positive (see equation (\ref{xcon})), and by equation (\ref{formdet})
\[
\det(1-A^*A) = 1-|x_1|^2 - |x_2|^2 +|x_3|^2 -2|x_1x_2-x_3| >0.
\]
Hence $\|A\| < 1$ and so (5)$\Rightarrow$(8).

Trivially (8)$\Rightarrow$(7).  Suppose (7) holds.  Since
\[
|a_{12}|^2 + |a_{21}|^2 \geq 2|a_{12}a_{21}| = 2|x_1x_2-x_3|,
\]
we have 
\[
1 - |x_1|^2 - |x_2|^2 +|x_3|^2 - 2|x_1x_2 - x_3| \geq 
1 - |x_1|^2 - |x_2|^2 +|x_3|^2 -|a_{12}|^2 - |a_{21}|^2 = \det(1-A^*A) > 0.
\]
Thus (7)$\Rightarrow$(5).

For the remaining implications we do not assume $x_1x_2 \neq x_3$.\\
(1)$\Rightarrow$(6)$\Rightarrow$(9)$\Rightarrow$(1)
Suppose (1).  Then (4) and (4$^\prime$) hold, and on adding these two inequalities we obtain (6).

Now suppose (6).  Certainly $|x_3| < 1$.  Let
\begin{equation} \label{defbetas}
\beta_1 = \frac{x_1 - \bar x_2 x_3}{1-|x_3|^2}, \quad
\beta_2 = \frac{x_2 - \bar x_1 x_3}{1 - |x_3|^2}.
\end{equation}
Inequality (6) tells us that $|\beta_1|+|\beta_2| < 1$ and
it is immediate that
\[
\beta_1 + \bar\beta_2 x_3 = x_1, \quad \beta_2 + \bar\beta_1 x_3 = x_2.
\]
Hence (9) holds.

Suppose (9).  Then $|x_2| \leq |\beta_1| + |\beta_2| < 1$ and
\[
|x_1|^2 - |x_2|^2 = (|\beta_1|^2 - |\beta_2|^2)(1-|x_3|^2) \leq 
(|\beta_1| - |\beta_2|)(1 - |x_3|^2).
\]
Moreover $x_2 - \bar x_1 x_3 = \beta_2(1-|x_3|^2)$, and so
\[
|x_1|^2 - |x_2|^2 + 2|x_2 - \bar x_1 x_3| \leq (|\beta_1| -|\beta_2 | +2|\beta_2|)
(1-|x_3|^2) < 1 - |x_3|^2.
\]
Thus (9)$\Rightarrow$(4)$\Rightarrow$(1).
\end{proof}
There are analogous characterizations of $\bar\E$.
\begin{Theorem} \label{closE}
For $x \in \C^3 $ the following conditions are equivalent.
\begin{enumerate}
\item[\rm (0)] $1 - x_1z -x_2 w + x_3 zw \neq 0$ for all $z, w \in \D$;
\item[\rm(1)] $x \in \bar\E;$
\item[\rm(2)] $||\Psi(.,x)||_{H^\infty} \leq 1$ and if $x_1 x_2=x_3$ then, in addition,
$|x_2| \leq1$;
\item[\rm(2$^\prime$)] $||\Upsilon(.,x)||_{H^\infty} \leq 1 $ and if $x_1 x_2=x_3$ then, in addition, 
$|x_1| \leq1$;
\item[\rm (3)] $|x_1 - \bar x_2 x_3| + |x_1 x_2 - x_3| \leq 1-|x_2|^2$ and if $x_1x_2=x_3$ then, in addition, $|x_1| \leq 1$;
\item[\rm (3$^\prime$)] $|x_2 - \bar x_1 x_3| + |x_1 x_2 - x_3| \leq 1-|x_1|^2$ and if $x_1x_2=x_3$ then, in addition, $|x_2| \leq 1$;
\item[\rm (4)] $|x_1|^2 - |x_2|^2 + |x_3|^2 +2|x_2 - \bar x_1 x_3| \leq1$ and $|x_2| \leq 1$;
\item[\rm (4$^\prime$)] $- |x_1|^2 + |x_2|^2 + |x_3|^2 +2|x_1 - \bar x_2x_3| \leq 1$ and $|x_1| \leq 1$;
\item[\rm(5)] $|x_1|^2 + |x_2|^2 - |x_3|^2 + 2|x_1x_2 - x_3| \leq 1$ 
and $|x_3| \leq 1$;
\item[\rm(6)] $|x_1 - \bar x_2x_3| + |x_2 - \bar x_1 x_3| \leq 1 - |x_3|^2$ 
and if $|x_3|=1$ then, in addition, $|x_1| \leq 1$;
\item[\rm(7)] there exists a $2 \times 2$ matrix $A=[a_{ij}]$ such that $||A|| \leq 1$ and
$x=(a_{11}, a_{22}, \det A)$;
\item[\rm(8)] there exists a symmetric $2 \times 2$ matrix $A=[a_{ij}]$ such that $||A|| \leq 1$ and
$x=(a_{11}, a_{22}, \det A)$;
\item[\rm(9)] $|x_3| \leq 1$ and there exist $\beta_1, \beta_2 \in \C$ such that
$|\beta_1| + |\beta_2| \leq 1$  and
\[
x_1=\beta_1 + \bar\beta_2 x_3, \quad x_2 = \beta_2+\bar\beta_1 x_3.
\]
\end{enumerate}
\end{Theorem}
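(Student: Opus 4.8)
The plan is to lean on Theorem \ref{characE} as heavily as possible. Every passage between the numbered conditions in that proof that is purely algebraic---the use of the formula (\ref{formD}) for $D(x)$, the expansion of $|x_3 z - x_1|^2 - |x_2 z - 1|^2$ on $\T$, the substitution $(x_2, x_3) \mapsto (\bar x_3, \bar x_2)$, and the determinant identities of Lemma \ref{AstarA}---consists of identities that are blind to the difference between $<$ and $\le$. I would therefore re-run those steps verbatim with non-strict inequalities, so that the equivalences among (2)--(5), (7), (8) and among (6), (9) reduce to exactly the manipulations used before. The genuinely new work is (i) to insert condition (0), the natural ``open-bidisc'' description of the closure, and (ii) to justify the extra degenerate clauses ``if $x_1 x_2 = x_3$ \ldots'' and ``if $|x_3| = 1$ \ldots''.

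For the backbone I would first prove (0) $\Leftrightarrow$ (2) by the factorisation used for (1) $\Leftrightarrow$ (2) in Theorem \ref{characE}, namely that $P(z,w) := 1 - x_1 z - x_2 w + x_3 z w$ vanishes if and only if $z\,\Psi(w,x) = 1$. Over the \emph{open} bidisc the equation $z\,\Psi(w,x) = 1$ has a solution with $z \in \D$ precisely when $|\Psi(w,x)| > 1$, so (0) is equivalent to $|\Psi(w,x)| \le 1$ for all $w \in \D$, i.e. $\|\Psi(\cdot,x)\|_{H^\infty} \le 1$, together with the requirement $|x_2| \le 1$ in the degenerate case $x_1 x_2 = x_3$ (where $\Psi(\cdot,x)$ is constant and the factorisation must be read off directly). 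I would then tie (0) to $\bar\E$ by scaling: the identity
\[
1 - (r x_1) z - (r x_2) w + (r^2 x_3) z w = 1 - x_1(rz) - x_2(rw) + x_3 (rz)(rw)
\]
shows that if (0) holds then $(r x_1, r x_2, r^2 x_3) \in \E$ for every $r \in (0,1)$, and letting $r \to 1^-$ gives $x \in \bar\E$. Conversely, for $x \in \bar\E$ I would take $x_n \to x$ with $x_n \in \E$ and pass to the limit in $|\Psi(w, x_n)| < 1$ for each \emph{fixed} $w \in \D$; the open disc guarantees that the denominators $x_2 w - 1$ stay away from $0$ once one knows $|x_2| \le 1$, which is itself inherited from $|x_{n,2}| < 1$. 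This gives (1) $\Rightarrow$ (2), completing the cycle (1) $\Rightarrow$ (2) $\Leftrightarrow$ (0) $\Rightarrow$ (1).

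With the backbone in place the remaining conditions follow as in Theorem \ref{characE}: (2) $\Leftrightarrow$ (3) $\Leftrightarrow$ (4) from (\ref{formD}) and the boundary computation, (1) $\Leftrightarrow$ (5) from the symmetry $(x_2, x_3) \mapsto (\bar x_3, \bar x_2)$, the matrix chain (5) $\Rightarrow$ (8) $\Rightarrow$ (7) $\Rightarrow$ (5) from Lemma \ref{AstarA}---here one only needs to note that a $2 \times 2$ Hermitian matrix with non-negative diagonal entries and non-negative determinant is positive semi-definite, so that the inequalities $\det(1 - A^*A) \ge 0$ coming from (\ref{formdet}) and the diagonal positivity in (\ref{xcon}) now give $\|A\| \le 1$ rather than $\|A\| < 1$---and finally (1) $\Rightarrow$ (6) $\Rightarrow$ (9) $\Rightarrow$ (1) by adding (4) and (4$^\prime$) and using the explicit formulas (\ref{defbetas}) for $\beta_1, \beta_2$, which are legitimate as long as $|x_3| < 1$.

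The delicate point throughout---and where I expect the real work to lie---is the family of degenerate clauses, which are not redundant: without them the bare non-strict inequalities admit spurious points. For instance, when $x_1 x_2 = x_3$ and $|x_2| = 1$ the main inequality of (3) collapses to $0 \le 0$ for every value of $x_1$, and when $|x_3| = 1$ the main inequality of (6) forces $x_1 = \bar x_2 x_3$ and $x_2 = \bar x_1 x_3$ but leaves $|x_1|$ unbounded; in each case the point need not lie in $\bar\E$. The clauses ``$|x_1| \le 1$'' (respectively ``$|x_2| \le 1$'') excise exactly these, and I would check that they hold automatically for $x \in \bar\E$ by passing to the limit in $|\Psi(0, x_n)| = |x_{n,1}| < 1$ and the symmetric statement $|\Upsilon(0, x_n)| = |x_{n,2}| < 1$. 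The case $|x_3| = 1$ in the equivalence (6) $\Leftrightarrow$ (9) must be treated by hand, since the substitution (\ref{defbetas}) degenerates there; one checks separately that $|x_3| = 1$ together with $|x_1| \le 1$ pins $x$ down to the relevant part of $\partial\E$ and produces admissible $\beta_1, \beta_2$ with $|\beta_1| + |\beta_2| \le 1$.
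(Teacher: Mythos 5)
Your overall architecture coincides with the paper's: (0)$\Rightarrow$(1) via the scaling $(rx_1,rx_2,r^2x_3)$; the converse by passing to the limit in $|\Psi(w,\xi)|<1$ as $\xi\to x$ with $\xi\in\E$, for fixed $w\in\D$ (the paper packages this as (1)$\Rightarrow$(0), you as (1)$\Rightarrow$(2), but it is the same limit argument); reduction of the triangular case to the pair $|x_1|\le 1$, $|x_2|\le 1$; a re-run of the algebra of Theorem~\ref{characE} with $\le$ in place of $<$ for the remaining conditions; and a separate hand treatment of $|x_3|=1$ in (6)$\Rightarrow$(9), where the paper takes $\beta_1=tx_1$, $\beta_2=(1-t)x_2$.

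There is, however, a genuine gap, and it sits precisely at the kind of point you single out as delicate. The step (1)$\Leftrightarrow$(5) ``by the symmetry $(x_2,x_3)\mapsto(\bar x_3,\bar x_2)$'' cannot be obtained by re-running the earlier argument with non-strict inequalities, because that substitution does not preserve triangularity: the image point $y=(x_1,\bar x_3,\bar x_2)$ is triangular exactly when $x_2=\bar x_1x_3$, which can happen for non-triangular $x$. What (1)$\Leftrightarrow$(4) applied to $y$ actually yields is the main inequality of (5) \emph{plus the clause} ``if $x_2=\bar x_1x_3$ then $|x_3|\le 1$'', which is neither the clause printed in (5) nor a consequence of it. Concretely, $x=(0,0,2)$ is non-triangular and satisfies (5) as stated, since $0+0-4+2\cdot 2=0\le 1$, yet it fails every other condition ($|x_3|>1$, so $x\notin\bar\E$). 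The same mismatch breaks your chain (5)$\Rightarrow$(8): the non-negativity of the diagonal entries of $1-A^*A$ in equation (\ref{xcon}) comes from conditions (3) and (3$'$) --- that is, from membership of $\bar\E$ --- and not from the main inequality of (5); for this $x$ the symmetric matrix $A$ built from a square root of $x_1x_2-x_3$ has $\|A\|=\sqrt 2$. To be fair, this defect is equally present in the paper's own proof, and indeed in the statement of condition (5) in both Theorem~\ref{characE} and Theorem~\ref{closE}, so you have reproduced it faithfully; but it shows that your guiding principle, that the algebraic steps are ``blind to the difference between $<$ and $\le$'', is not safe here. The clause bookkeeping you rightly insist on must also be done for (5), and carrying it out does not prove the stated equivalence --- it forces one to augment (5) by the additional clause ``if $x_2=\bar x_1x_3$ then, in addition, $|x_3|\le 1$''.
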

\begin{proof}
(0)$\Rightarrow$(1)  Suppose (0) and consider any $\zeta, \eta \in \Delta$.  Since 
$(r\zeta,r\eta) \in \D^2$ for any $r \in (0,1)$ we have 
$1-x_1r\zeta -x_2r\eta +x_3r^2\zeta\eta \neq 0$.  Hence $(rx_1,rx_2,r^2x_3) \in \E$
for $r \in (0,1)$, and so $x \in \bar\E$.

(1)$\Rightarrow$(0)  Suppose $x\in \bar\E$ but $1-x_1z-x_2w+x_3zw =0$ for some
$z,w \in \D$.  Then $z\Psi(w,x) = 1$ and so $|\Psi(w,x)|>1$.  However, 
$|\Psi(w,\xi)| < 1$ for all $\xi \in \E$, and since $x$ is a limit point of such $\xi$
we have $|\Psi(w,x)| \leq 1$, a contradiction.

Consider the case that $x_1x_2=x_3$.  Condition (0) reduces to $1-x_1z \neq 0,\, 1-x_2w\neq 0$ for all $z,w \in \D$, that is, to 
\begin{equation}\label{pair}
|x_1| \leq 1, \quad |x_2| \leq 1.  
\end{equation}
An analysis of cases shows that conditions (2) to (5) and (7) and (8) also reduce to this pair of inequalities. 
In particular, condition (5) becomes
\[
(1-|x_1|)(1-|x_2|) \geq 0 \mbox{ and } (1-|x_1|)+(1-|x_2|) \geq 0,
\]
which is equivalent to the relations (\ref{pair}).  Thus (0) to (5), (7) and (8) are
all equivalent in the case that $x_1x_2=x_3$.

In the case that $x_1x_2 \neq x_3$ the proof of equivalence of (0)-(5), (7) and (8) is much as for Theorem \ref{characE}.  It remains to prove (1)$\Rightarrow$(6)$\Rightarrow$(9)
$\Rightarrow$(1), whether or not $x_1x_2=x_3$.

(1)$\Rightarrow$(6)  We have $|x_1| \leq 1$, for example from (2$^\prime$), and on adding the inequalities in (4) and (4$^\prime$) we deduce (6).\\
(6)$\Rightarrow$(9)  Suppose (6).  Clearly $|x_3| \leq 1$.
If $|x_3| < 1$ then the proof that (6)$\Rightarrow$(9) in Theorem \ref{characE} still applies.  If $|x_3|=1$ then $x_1 = \bar x_2 x_3, \, |x_2|=|x_1| \leq 1$ and we find that (9) holds with $\beta_1 = t x_1, \beta_2 = (1-t) x_2$ for any $t \in [0,1]$.\\
(9)$\Rightarrow$(1) is proved just as in Theorem \ref{characE}.
\end{proof}
\begin{Remark}  \rm (i) Further criteria for membership of $\E$ and $\bar\E$, in terms of
the structured singular value, are given in Theorem \ref{mult1} below.\\
(ii) Note the strange symmetry of $\E$ and $\bar\E$:
\[
(x_1,x_2,x_3) \mapsto (x_1,\bar x_3, \bar x_2)
\]
which we used in the proof and which follows from criterion (4$^\prime$).\\
(iii)  In relation to conditions (8) we observe that, for any $x\in\C^3$, there is either a unique symmetric $2\times 2$ matrix $A$ such that $x=(a_{11},a_{22},\det A)$ (when $x_1x_2=x_3$), or precisely two such $A$s, corresponding to the square roots of $x_1x_2-x_3$.  In the latter case the two $A$s are unitarily equivalent, by conjugation by $\diag(1,-1)$.  Hence we can replace ``There exists a ...'' by ``For every ...'' in (8) if we wish.\\
(iv)  Condition (9) of Theorem \ref{characE} furnishes a foliation of $\E$ by a family
of geodesic discs.  Indeed, for $\beta_1,\beta_2$ such that $|\beta_1|+|\beta_2| < 1$,
the map
\[
\ph_{\beta_1 \beta_2} : \D \to \E: \la \mapsto 
( \beta_1+\bar\beta_2 \la, \beta_2 + \bar\beta_1 \la, \la)
\]
satisfies $\Psi(\omega, .) \circ \ph_{\beta_1 \beta_2} \in \Aut \D$ 
for any $\omega \in \T$,  since $\Psi(\omega,.)$ is analytic from $\E$ to $\D$ and
\[
\Psi(\omega,  \ph_{\beta_1 \beta_2}(\la)) = c \frac{\alpha - \la}{1 - \bar\al\la}
\]
where
\[
c = \omega\frac{1-\bar\omega\bar\beta_2}{1-\omega\beta_2} \in \T, \quad
\al = \frac{\bar\omega\beta_1}{1-\bar\omega\bar\beta_2} \in \D.
\]
Since $\ph_{\beta_1 \beta_2}$ has a left inverse modulo $\Aut \D$ it is a complex geodesic of $\E$.  Since $\beta_1, \beta_2$ are determined, for any $x \in \E$, by equations
(\ref{defbetas}), each point of $\E$ lies on a unique disc $\ph_{\beta_1 \beta_2}(\D)$.
However, points of $\partial\E$ of the form $(x_1, \bar x_1 x_3, x_3)$ with $|x_3|=1$ lie on infinitely many discs $\ph_{\beta_1 \beta_2}(\Delta)$ (these are the points of the distinguished boundary of $\bar\E$;  see Theorem \ref{characdE} below).\\
(v)  Here is a geometric interpretation of the parameters $\beta_1, \beta_2$ in conditions (9).  For $x=(x_1,x_2,x_3) \in \bar\E$ let $\tilde{x}=(x_1,\bar x_3,\bar x_2)$.  As we have observed, $\tilde{x} \in \bar\E$.   In view of the formulae (\ref{imPsi}) and (\ref{defbetas}) we find that the disc $\Psi(\D, \tilde{x})$ has centre $\beta_1$ and radius $|\beta_2|$.   \hfill $\Box$
\end{Remark}

Note that any point $(x_1, x_2, x_3)$ of $\bar\E$ satisfies $x_1 x_2 = x_3$ if and only if any matrix representing it as in (7) of Theorem~\ref{closE} is either upper or lower triangular. This motivates the following definition:
\begin{Definition}\label{triangular}
We say that a point $(x_1, x_2, x_3)$ of $\bar\E$ is {\em triangular} if $x_1 x_2 = x_3$.
\end{Definition}

The characterization theorems show the close relation between $\E$ and two standard Cartan domains: the open unit balls $R_I(2,2), R_{II}(2)$ of the spaces of $2\times 2$ matrices and symmetric $2\times 2$ matrices respectively.  
Denote by $\pi$ the mapping
\begin{equation} \label{defpi}
\pi: \C^{2\times 2} \to \C^3: A=[a_{ij}] \mapsto (a_{11},a_{22},\det A).
\end{equation}
Two of the assertions of Theorem \ref{characE} are that $\E$ is the image under $\pi$ of both the Cartan domains $R_I(2,2)$ and $ R_{II}(2)$.  

Condition (2) of Theorem \ref{characE} shows that we can almost identify $\E$ with the space of M\"obius transformations that map $\Delta$ to $\D$ via the correspondence $x \mapsto \Psi(.,x)$.  For non-triangular $x$ (and equivalently non-constant $\Psi(.,x)$) this correspondence is bijective, but if $x$ is triangular then $\Psi(.,x)$ is the constant function equal to $x_1$, and so the whole disc $\{(x_1, \la, x_1\la): \la \in \D\} \subset \E$ maps to the same constant function $\Psi(.,x)$.  It is nevertheless often useful to think of $\E$ and $\bar\E$ as sets of M\"obius transformations.  In particular this viewpoint reveals a natural binary operation on $\E$, corresponding to the composition of M\"obius transformations.  We make this precise in Section \ref{autos}.
 
We conclude this section with some basic geometric properties of $\E$.  Firstly, both $\E$ and $\bar\E$ are non-convex: if $x= (1,i,i)$ and $y=(-i,1,-i)$ then $x,y \in \bar\E$ but 
$\tfrac 12 (x+y) \notin \bar\E$.  However, $\bar\E$ is contractible by virtue of the following result.
\begin{Theorem} \label{starlike}
$\E$ and $\bar\E$ are starlike about $(0,0,0)$ but are not circled.
\end{Theorem}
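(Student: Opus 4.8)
The plan is to reduce the starlikeness of $\E$ to a single pointwise nonvanishing statement and to read off non-circledness from one explicit point. For $x=(x_1,x_2,x_3)$ I would write $f_x(z,w)=1-x_1z-x_2w+x_3zw$, so that $x\in\E$ exactly when $f_x(z,w)\ne0$ for all $|z|\le1,\ |w|\le1$. The payoff of scaling is the elementary identity
\[
f_{tx}(z,w)=(1-t)+t\,f_x(z,w)\qquad(t\in[0,1]),
\]
got by pulling the factor $t$ out of the three non-constant terms. Since $f_{0\cdot x}\equiv1$, a zero $f_{tx}(z,w)=0$ with $t\in(0,1]$ would force $f_x(z,w)=-(1-t)/t\in(-\infty,0]$. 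Hence, to conclude $tx\in\E$ for every $t\in[0,1]$, it suffices to show that for $x\in\E$ the function $f_x$ omits the whole ray $(-\infty,0]$ on the closed bidisc, not merely the value $0$.

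The key step is the factorization
\[
f_x(z,w)=(1-x_2w)\bigl(1-z\,\Psi(w,x)\bigr),
\]
which is legitimate because $x\in\E$ gives $|x_2|<1$ (condition (3) of Theorem \ref{characE}), so $1-x_2w\ne0$ and $\Psi(\cdot,x)$ of \eqref{dPsi} is continuous on the closed disc with $\max_{|w|\le1}|\Psi(w,x)|=D(x)<1$ by \eqref{formD} and condition (2). For $|z|\le1,\ |w|\le1$ one then has $|x_2w|<1$ and $|z\,\Psi(w,x)|\le D(x)<1$, so each of the two factors lies in the open disc of radius $1$ centred at $1$, and in particular has strictly positive real part. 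The product of two numbers of argument in $(-\tfrac\pi2,\tfrac\pi2)$ has argument in $(-\pi,\pi)$, so $f_x(z,w)$ can be neither $0$ nor a negative real; this is precisely the omission of $(-\infty,0]$ needed above. I expect the only real obstacle to be controlling the \emph{sign} of $f_x$ rather than just its vanishing, and it is exactly this argument-of-a-product observation that dispatches it. Starlikeness of $\bar\E$ is then immediate by a limit: if $x\in\bar\E$ and $x_n\in\E$ with $x_n\to x$, then $tx_n\in\E$ and $tx_n\to tx$, whence $tx\in\bar\E$.

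Finally, to see that neither $\E$ nor $\bar\E$ is circled, I would exhibit a single point whose rotation escapes $\bar\E$. Take $x=(\tfrac12,\tfrac12,\tfrac12)$: condition (3) of Theorem \ref{characE} gives $|x_1-\bar x_2x_3|+|x_1x_2-x_3|=\tfrac14+\tfrac14=\tfrac12<\tfrac34=1-|x_2|^2$, so $x\in\E$. For the unimodular factor $\la=-1$ the corresponding expression for $-x=(-\tfrac12,-\tfrac12,-\tfrac12)$ becomes $\tfrac34+\tfrac34=\tfrac32>\tfrac34$, violating condition (3) of Theorem \ref{closE} (and $-x$ is not triangular), so $-x\notin\bar\E\supseteq\E$. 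Thus $\E$ and $\bar\E$ are starlike about the origin but not circled.
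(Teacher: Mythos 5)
Your proof is correct, but it takes a genuinely different route from the paper's. The paper proves starlikeness by verifying the explicit algebraic identity
\[
|1-rzx_2|^2 - |rx_1 - rzx_3|^2 = r^2\{|1-zx_2|^2 - |x_1-zx_3|^2\} + (1-r)(1+r-2r\,\mathrm{Re}(zx_2)),
\]
and observing that for $x\in\bar\E$, $z\in\Delta$ and $0\le r<1$ the first term on the right is nonnegative (Theorem \ref{closE}, condition (2)) while the second is strictly positive; hence $|\Psi(z,rx)|<1$ for all $z\in\Delta$, so that $rx$ lies in the \emph{open} domain $\E$. This treats $\E$ and $\bar\E$ simultaneously and needs no limiting step. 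You instead exploit the affine dependence $f_{tx}=(1-t)+tf_x$ of the defining function on the scaling parameter, which reduces starlikeness to the statement that $f_x$ omits the ray $(-\infty,0]$ on the closed bidisc, and you obtain that from the factorization $f_x(z,w)=(1-x_2w)\bigl(1-z\Psi(w,x)\bigr)$ — which is correct, and remains valid in the triangular case because $|x_2|<1$ — together with the observation that a product of two numbers with positive real part cannot be a non-positive real. This phase argument replaces the paper's modulus-squared computation; the small price is that starlikeness of $\bar\E$ then needs your closure/limit step, yielding only $tx\in\bar\E$ rather than the paper's sharper conclusion that $rx\in\E$ for $r<1$ even when $x\in\bar\E$. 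For non-circledness the paper uses the boundary point $(1,1,1)\in\bar\E$ with $i(1,1,1)\notin\bar\E$, which settles the open domain $\E$ only via the tacit fact that the closure of a circled set is circled; your example $(\tfrac12,\tfrac12,\tfrac12)\in\E$ with $-(\tfrac12,\tfrac12,\tfrac12)\notin\bar\E$ uses an interior point and so disposes of both $\E$ and $\bar\E$ at once. Your two numerical verifications (membership via condition (3) of Theorem \ref{characE}, non-membership via condition (3) of Theorem \ref{closE}) both check out.
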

\begin{proof}
A straightforward verification shows that, for any $x\in\C^3, z\in\C$ and $ r > 0$,
\begin{equation}\label{ident}
|1-rzx_2|^2 - |rx_1 - rzx_3|^2 = r^2\{|1-zx_2|^2 - |x_1-zx_3|^2\} + (1-r)(1+r-2r\mathrm{Re}(zx_2)).
\end{equation}
Consider $x\in \bar\E, z\in \Delta$ and $0\leq r <1$.  By Theorem \ref{closE}, condition (2), we have $||\Psi(.,x)||_\infty \leq 1$ and hence $|1-zx_2|^2 - |x_1-zx_3|^2 \geq 0$.
It is also true that $1-r >0$ and $1+r-2r\mathrm{Re}(zx_2) > 0$.  It follows from the identity (\ref{ident}) that
\[
|1-rzx_2|^2 - |rx_1 - rzx_3|^2 > 0,
\]
or equivalently $\Psi(z, rx) \in \D$.  Thus $\Psi(.,rx)$ maps $\Delta$ into $\D$, and so $rx \in \E$.  Thus $\E$ and $\bar\E$ are starlike about $(0,0,0)$.  The point $x=(1,1,1)$ is in $\bar\E$ but $ix \notin\bar\E$, so that neither $\bar\E$ nor $\E$ is circled.
\end{proof}
Although $\E$ is not convex, $\E\cap \R^3$ {\em is}.
\begin{Theorem} \label{EcapR3}
$\E\cap \R^3$ is the open tetrahedron with vertices $(1,1,1), (1,-1,-1)$, \\ $ (-1,1,-1)$ and $(-1,-1,1)$.
\end{Theorem}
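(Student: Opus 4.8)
The plan is to specialise characterization (3) of Theorem~\ref{characE} to real points. For the \emph{open} tetrablock this criterion carries no triangularity caveat, which makes it the most convenient of the equivalent conditions; so a real point $x=(x_1,x_2,x_3)$ lies in $\E$ if and only if
\[
|x_1 - x_2 x_3| + |x_1 x_2 - x_3| < 1 - x_2^2 .
\]
I would deliberately avoid condition (5) here, whose triangularity clause is awkward to apply to real points. First I would record that this inequality forces $1-x_2^2>0$, i.e. $|x_2|<1$, since the left-hand side is nonnegative; this legitimises dividing by $1\pm x_2$ later.

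Next, writing $u = x_1 - x_2 x_3$ and $v = x_1 x_2 - x_3$, the key algebraic observation is the pair of factorisations
\[
u + v = (x_1 - x_3)(1 + x_2), \qquad u - v = (x_1 + x_3)(1 - x_2),
\]
together with the elementary real-variable identity $|u|+|v| = \max\{|u+v|,\,|u-v|\}$. Combining these, condition (3) is equivalent to the two inequalities $|u+v| < 1-x_2^2$ and $|u-v| < 1-x_2^2$ holding simultaneously; since $1-x_2^2 = (1-x_2)(1+x_2)$ with $1\pm x_2 > 0$, they reduce after cancellation to
\[
|x_1 - x_3| < 1 - x_2 \quad\text{and}\quad |x_1 + x_3| < 1 + x_2 .
\]

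Finally I would unpack each two-sided inequality into a pair of affine inequalities, obtaining the four constraints
\[
1 + x_1 + x_2 + x_3 > 0, \quad 1 + x_1 - x_2 - x_3 > 0, \quad 1 - x_1 + x_2 - x_3 > 0, \quad 1 - x_1 - x_2 + x_3 > 0,
\]
and then identify the polytope they cut out. The four bounding hyperplanes have normals $(\pm1,\pm1,\pm1)$ with an even number of minus signs, and a direct check shows that setting any three of the four expressions to zero yields exactly one of the points $(1,1,1),\,(1,-1,-1),\,(-1,1,-1),\,(-1,-1,1)$, with the remaining expression strictly positive there; hence this is precisely the open tetrahedron with those four vertices. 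Conversely each such point has $|x_2|<1$, so the whole chain of equivalences is reversible.

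The computation is routine, so there is no deep obstacle; the points requiring care are (i) choosing condition (3) rather than a characterization encumbered by a triangularity clause, (ii) justifying the cancellation of $1\pm x_2$ via $|x_2|<1$ in \emph{both} directions, and (iii) confirming that the four half-spaces bound exactly the named tetrahedron rather than some larger region sharing the same facet directions (equivalently, that the polytope is bounded with precisely these four extreme points).
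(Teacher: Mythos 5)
Your proof is correct, and it takes a genuinely different route from the paper's. The paper argues function-theoretically, essentially from condition (2) of Theorem \ref{characE}: for a real non-triangular point with $|x_2|<1$, the disc $\Psi(\Delta,x)$ has real centre, so $|\Psi(\cdot,x)|$ attains its maximum over $\Delta$ at $z=\pm 1$, and membership in $\E$ reduces to $-1<\Psi(\pm 1,x)<1$ --- which unpacks into exactly your four affine inequalities; the paper must then dispose separately of the cases $|x_2|\ge 1$ and $x_1x_2=x_3$, where that maximum-modulus argument degenerates. You instead run pure algebra on condition (3), exploiting precisely the fact that it carries no triangularity caveat, together with the real-variable identity $|u|+|v|=\max\{|u+v|,|u-v|\}$ and the factorisations $u+v=(x_1-x_3)(1+x_2)$, $u-v=(x_1+x_3)(1-x_2)$. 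This makes your argument uniform (no case analysis at all) and somewhat shorter, whereas the paper's version is more geometric and explains where the four faces come from, namely the values of the M\"obius map $\Psi(\cdot,x)$ at $\pm 1$ --- consistent with the paper's running viewpoint of $\E$ as a family of M\"obius transformations. Both proofs end with the same routine polytope identification, which the paper simply asserts; your vertex check, supplemented by boundedness (pairwise sums of the four inequalities give $|x_i|<1$ for each $i$), makes that step rigorous.
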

\begin{proof}
Let $x\in\R^3, |x_2| < 1$ and suppose $x$ non-triangular.  The centre of the disc $\Psi(\Delta,x)$ is real, to wit
$\frac{x_1-x_2x_3}{1-|x_2|^2}, $
and hence the point $\zeta$ of maximum modulus in $\Psi(\Delta,x)$ is also real.  It follows that $\Psi(.,x)^{-1}(\zeta) \in \R$, and so $\Psi(.,x)$ attains its maximum modulus over $\Delta$ at either $1$ or $-1$.  Hence $x\in\E$ if and only if
\[
 -1 <\Psi(-1,x) < 1 \mbox{  and  } -1 < \Psi(1,x) < 1,
\]
that is,
\begin{eqnarray} \label{4faces}
-x_1+x_2-x_3+1 > 0, &\quad& -x_1-x_2+x_3+1 > 0\\
x_1+x_2+x_3+1 >0, &\quad& x_1-x_2-x_3+ 1 >0. \nn
\end{eqnarray}
The four half-spaces defined by these inequalities intersect in the open tetrahedron with the four vertices in the statement of the theorem, and so $x\in\E$ if and only if $x$ lies in the tetrahedron.

If $|x_2|\geq 1$ then $x$ belongs neither to $\E$ nor to the tetrahedron.  If $x$ is triangular the inequalities (\ref{4faces}) reduce to
\begin{eqnarray*} 
(1-x_1)(1+x_2)> 0, &\quad& (1-x_1)(1-x_2) > 0\\
 (1+x_1)(1+x_2) >0, &\quad& (1+x_1)(1-x_2) >0,
\end{eqnarray*}
which is equivalent to $|x_1|< 1, |x_2|< 1$.  Thus in all cases, for $x\in \R^3$ we have $x\in\E$ if and only if $x$ lies in the tetrahedron.
\end{proof}
\begin{Theorem} \label{polyconvex}
$\bar\E$ is polynomially convex.
\end{Theorem}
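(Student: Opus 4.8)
The plan is to show that the polynomial hull of the compact set $\bar\E$ coincides with $\bar\E$ by exploiting the characterization of $\bar\E$ in terms of the rational functions $\Psi(z,\cdot)$ (condition (2) of Theorem \ref{closE}), after observing that each such function is a uniform limit of polynomials on $\bar\E$.

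First I would record two elementary facts. Since $|x_2|\le 1$ throughout $\bar\E$ (for instance by condition (3) of Theorem \ref{closE}), the coordinate functional $x\mapsto x_2$ has supremum $1$ on $\bar\E$; hence every point $y=(a,b,p)$ of the polynomial hull $\widehat{\bar\E}$ satisfies $|b|\le 1$. Secondly, for $x\in\bar\E$ and $z\in\D$ the denominator $x_2 z-1$ is nonvanishing (as $|x_2 z|<1$), so $\Psi(z,x)$ is a genuine number, and condition (2) of Theorem \ref{closE} gives $|\Psi(z,x)|\le \|\Psi(\cdot,x)\|_{H^\infty}\le 1$.

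The key step is approximation. Fix $z\in\D$ and expand $\Psi(z,x)=(x_1-x_3 z)\sum_{k\ge 0}(x_2 z)^k$. Because $|x_2 z|\le|z|<1$ uniformly over $x\in\bar\E$, this geometric series converges uniformly on $\bar\E$, so the partial sums $p_N(x)=(x_1-x_3 z)\sum_{k=0}^N (x_2 z)^k$ are polynomials converging uniformly to $\Psi(z,\cdot)$ on $\bar\E$, with $\sup_{\bar\E}|p_N|\to\sup_{\bar\E}|\Psi(z,\cdot)|\le 1$. Now take $y=(a,b,p)\in\widehat{\bar\E}$. The hull inequality $|p_N(y)|\le\sup_{\bar\E}|p_N|$ together with the Cauchy property of $(p_N)$ on $\bar\E$ forces $p_N(y)$ to converge with $|\lim_N p_N(y)|\le 1$; on the other hand, since $|bz|<1$, direct evaluation gives $p_N(y)\to (a-pz)/(1-bz)=\Psi(z,y)$. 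Hence $|\Psi(z,y)|\le 1$.

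Letting $z$ range over $\D$ yields $\|\Psi(\cdot,y)\|_{H^\infty}\le 1$, and the first observation supplies the supplementary requirement $|b|\le 1$ needed in the triangular case $ab=p$. By the implication (2)$\Rightarrow$(1) of Theorem \ref{closE}, $y\in\bar\E$, so $\widehat{\bar\E}=\bar\E$ and $\bar\E$ is polynomially convex. The only real subtlety, and the crux of the argument, is the passage from polynomials to the defining rational functions $\Psi(z,\cdot)$; this is precisely what the uniform convergence of the geometric series on $\bar\E$ (guaranteed by $|x_2|\le 1$ there) makes legitimate.
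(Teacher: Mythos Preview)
Your proof is correct and follows essentially the same strategy as the paper's: both arguments rest on the observation that for each fixed $z\in\D$ the function $\Psi(z,\cdot)$ is the uniform limit on $\bar\E$ (the paper works on $\Delta^3\supset\bar\E$) of the polynomial partial sums of its geometric-series expansion. The only difference is packaging: the paper fixes a point $x\notin\bar\E$ and manufactures a single separating polynomial $(1+\ep)^{-1}f_N$, whereas you argue contrapositively by showing that every point of $\widehat{\bar\E}$ satisfies condition~(2) of Theorem~\ref{closE}.
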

\begin{proof}
Let $x\in \C^3\setminus\bar\E$.  We must find a polynomial $f$ such that $|f|\leq 1$ on $\bar\E$ and $|f(x)| > 1$.  If $x$ is triangular it suffices to take $f(x)=x_1$ or $f(x)=x_2$, and if any $|x_j| > 1$ we may take $f(x)=x_j$, so we assume that $x$ is non-triangular and $x \in \Delta^3$.  By Theorem \ref{closE}, condition (2), there exists $z\in\D$ such that $|\Psi(z,x)|> 1$, while $|\Psi(z,.)| \leq 1$ on $\bar\E$.  Let $f_N$ be the polynomial given by  $f_N(x)=(x_1-x_3z)(1+x_2z + x_2^2z^2+\dots+x_2^Nz^N)$; then, for any $y\in\Delta^3$,
\[
|f_N(y)-\Psi(z,y)| \leq \frac{2|z|^{N+1}}{1-|z|}.
\]
Let $0<\ep< \tfrac 13(|\Psi(z,x)|-1)$ and choose $N$ so large that $|f_N -\Psi(z,.)| < \ep$ on $\Delta^3$.  Then $|f_N| < 1+\ep$ on $\bar\E$ and $|f_N(x)| \geq 1+2\ep$.
Hence we can take $f=(1+\ep)^{-1}f_N$ .
\end{proof}
It follows that $\E$ is a domain of holomorphy (for example \cite[Theorem 3.4.2]{krantz}).  However, Theorem \ref{EcapR3} shows that $\E$ does not have a $C^1$ boundary, and consequently much of the theory of pseudoconvex domains does not apply to $\E$.

 \section{A Schwarz lemma for the tetrablock} \label{schwarz}
Criterion (7) of Theorem \ref{closE} tells us that $x \in \bar\E$ if and only if
$x=\pi(A)$ for some contractive $2\times 2$ matrix $A$.
It follows that any $2\times 2$ function $F$ in the Schur class determines an analytic function $\pi\circ F: \D \to \bar\E.$  The interpolation problem for $\bar\E$
can therefore be addressed with the aid of the rich classical interpolation theory of the Schur class: to prove Theorem \ref{schwarzL} we shall use a refinement of the Schur-Nevanlinna reduction process for which the following result will be useful.
\begin{Lemma}\label{anX}
Let $Z \in \C^{2\times 2}$ be such that $\|Z\| < 1$ and let $0\leq \rho < 1$.
Let
\begin{equation} \label{defM}
M(\rho) = \left[ \begin{array}{cc}
 [(1-\rho^2Z^*Z)(1-Z^*Z)^{-1}]_{11}  & [(1-\rho^2)(1-ZZ^*)^{-1}Z ]_{21} 
 \\
   {[(1-\rho^2) Z^* (1-ZZ^*)^{-1}]_{12}}  &  [(ZZ^*-\rho^2)(1-ZZ^*)^{-1}]_{22}
\end{array} \right].
\end{equation}
\begin{enumerate}
\item[\rm (1)]
There exists $X \in \C^{2\times 2}$ such that 
$\|X\| \leq \rho$ and $[\mathcal{M}_{-Z}(X)]_{22} =0$ if and only if $\det M(\rho) \leq 0$.  
\item[\rm (2)]
For any $2\times 2$ matrix $X$, $[\mathcal{M}_{-Z}(X)]_{22} =0$ if and only if there exists $\al \in \C^2 \setminus \{0\}$ such that  
\[
X^*u(\al) = v(\al)
\]
where
\begin{eqnarray} \label{defuv}
 u(\al) &=& (1-ZZ^*)^{-\tfrac 12} (\al_1Ze_1 +\al_2 e_2),  \\
 v(\al) &=&  -(1-Z^*Z)^{-\tfrac 12} (\al_1 e_1 + \al_2 Z^* e_2) \nn
\end{eqnarray}
and $e_1, e_2$ is the standard basis of $\C^2$.  
\item[\rm(3)] In particular, if $\det M(\rho) \leq 0$ then an
$X$ such that $||X|| \leq \rho$ and $[\mathcal{M}_{-Z}(X)]_{22} =0$
is given by
\[
X= \left\{ \begin{array}{cl} \displaystyle \frac{u(\al)v(\al)^*}{\|u(\al)\|^2} & \mbox{ if  } [Z]_{22} \neq 0 \\
	0 & \mbox{ if  } [Z]_{22} =0
\end{array} \right.
\]
for any $\al \in \C^2 \setminus \{0\}$ such that $\left<M(\rho)\al,\al \right> \leq 0$.
\end{enumerate}
\end{Lemma}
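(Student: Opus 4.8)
The plan is to prove part (2) first and then read off (1) and (3) from it, the only extra ingredients being a one-vector interpolation fact and the diagonalisation of a $2\times 2$ Hermitian form. Throughout I shall use the intertwining relations $Zf(Z^*Z)=f(ZZ^*)Z$ and $Z^*f(ZZ^*)=f(Z^*Z)Z^*$, valid for any function $f$ of a non-negative operator and applied here to $f(t)=(1-t)^{\pm\tfrac12}$; these let me slide the square-root factors past $Z$ and $Z^*$. I also record that $\mathcal M_{-Z}(0)=(1-ZZ^*)^{-\tfrac12}Z(1-Z^*Z)^{\tfrac12}=Z$.

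For part (2), write $u(\al)=\al_1u_1+\al_2u_2$, $v(\al)=\al_1v_1+\al_2v_2$ with $u_j=u(e_j)$, $v_j=v(e_j)$. Then a nonzero $\al$ with $X^*u(\al)=v(\al)$ exists precisely when the $2\times 2$ matrix $L=X^*U-V$ is singular, where $U=[u_1\ u_2]=(1-ZZ^*)^{-\tfrac12}[Ze_1\ e_2]$ and $V=[v_1\ v_2]=-(1-Z^*Z)^{-\tfrac12}[e_1\ Z^*e_2]$. So part (2) amounts to the determinant identity stating that $\det L$ is a nonzero scalar multiple of $\overline{[\mathcal M_{-Z}(X)]_{22}}$. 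I would prove it by factoring $(1-Z^*Z)^{-\tfrac12}$ out of $L$ on the left and setting $R=(1-Z^*Z)^{\tfrac12}X^*(1-ZZ^*)^{-\tfrac12}$, so that the columns of the remaining matrix are $(1+RZ)e_1$ and $(R+Z^*)e_2$. The intertwining relations give $1+RZ=(1-Z^*Z)^{\tfrac12}(1+X^*Z)(1-Z^*Z)^{-\tfrac12}$ and $R+Z^*=(1-Z^*Z)^{\tfrac12}(X^*+Z^*)(1-ZZ^*)^{-\tfrac12}$; pulling out the common left factor $(1-Z^*Z)^{\tfrac12}(1+X^*Z)$ and writing $G=(1+X^*Z)^{-1}(X^*+Z^*)$, for which $\mathcal M_{-Z}(X)^*=(1-Z^*Z)^{\tfrac12}G(1-ZZ^*)^{-\tfrac12}$, the whole determinant collapses to $\det(1+X^*Z)\det(1-Z^*Z)^{-\tfrac12}$ times $\det[e_1\ \mathcal M_{-Z}(X)^*e_2]=[\mathcal M_{-Z}(X)^*]_{22}=\overline{[\mathcal M_{-Z}(X)]_{22}}$. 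Since $1+X^*Z$ is invertible wherever $\mathcal M_{-Z}(X)$ is defined, this is the desired identity, so $[\mathcal M_{-Z}(X)]_{22}=0$ iff $\det L=0$ iff such an $\al$ exists. This factorisation, i.e. the bookkeeping of the four square-root factors, is the step I expect to be the main obstacle; the rest is comparatively soft.

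For part (1), part (2) lets me replace ``there is $X$ with $\|X\|\le\rho$ and $[\mathcal M_{-Z}(X)]_{22}=0$'' by ``there are a nonzero $\al$ and an $X$ with $\|X\|\le\rho$ and $X^*u(\al)=v(\al)$''. For fixed $\al$ the inner problem is elementary: a matrix $X$ with $\|X\|\le\rho$ and $X^*u(\al)=v(\al)$ exists iff $\|v(\al)\|\le\rho\|u(\al)\|$, the norm-minimal choice being the rank-one $X^*=v(\al)u(\al)^*/\|u(\al)\|^2$. Hence the condition becomes the existence of a nonzero $\al$ with $\|v(\al)\|^2-\rho^2\|u(\al)\|^2\le0$. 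Expanding both norms and simplifying each coefficient with the intertwining relations identifies this Hermitian form with the matrix $M(\rho)$ of the statement; for example its $(1,1)$ entry is $[(1-Z^*Z)^{-1}]_{11}-\rho^2[Z^*(1-ZZ^*)^{-1}Z]_{11}=[(1-\rho^2Z^*Z)(1-Z^*Z)^{-1}]_{11}$, and the remaining entries come out the same way. Finally, a $2\times 2$ Hermitian form assumes a non-positive value at some nonzero vector iff it fails to be positive definite; since $1-\rho^2Z^*Z$ and $(1-Z^*Z)^{-1}$ are commuting positive matrices their product is positive definite, so $[M(\rho)]_{11}>0$ and $M(\rho)$ is never negative definite, whence by Sylvester's criterion the form assumes a non-positive value iff $\det M(\rho)\le0$. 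This is exactly part (1).

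Part (3) is the explicit extremal solution produced by this reduction. Given $\det M(\rho)\le0$, choose $\al\neq0$ with $\langle M(\rho)\al,\al\rangle\le0$, equivalently $\|v(\al)\|\le\rho\|u(\al)\|$. If $[Z]_{22}\neq0$ one checks that any such $\al$ has $u(\al)\neq0$: were $u(\al)=0$ the inequality would force $v(\al)=0$, i.e. $\al_1 e_1+\al_2 Z^*e_2=0$, whose second entry is $\al_2\overline{[Z]_{22}}$, so $[Z]_{22}\neq0$ gives $\al_2=0$ and then $\al_1=0$, a contradiction. Thus $X=u(\al)v(\al)^*/\|u(\al)\|^2$ is well defined, satisfies $X^*u(\al)=v(\al)$ and $\|X\|=\|v(\al)\|/\|u(\al)\|\le\rho$, and hence $[\mathcal M_{-Z}(X)]_{22}=0$ by part (2). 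If $[Z]_{22}=0$ the choice $X=0$ works, since $[\mathcal M_{-Z}(0)]_{22}=[Z]_{22}=0$ by the identity recorded above. This completes the plan.
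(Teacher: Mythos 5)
Your proof is correct, and its overall skeleton matches the paper's: you prove (2) first, then get (1) by reducing the constrained interpolation of a single vector pair to $\|v(\alpha)\|\le\rho\|u(\alpha)\|$, identifying $\|v(\alpha)\|^2-\rho^2\|u(\alpha)\|^2$ with the Hermitian form of $M(\rho)$, and noting that $[M(\rho)]_{11}>0$ makes failure of positive definiteness equivalent to $\det M(\rho)\le 0$; part (3) via the rank-one extremal $X$ and the case split on $[Z]_{22}$ is also exactly the paper's. Where you genuinely diverge is the mechanism for part (2). The paper writes $\mathcal{M}_{-Z}(X)=(AX+B)(CX+D)^{-1}$ with $A=(1-ZZ^*)^{-1/2}$, $B=(1-ZZ^*)^{-1/2}Z$, $C=(1-Z^*Z)^{-1/2}Z^*$, $D=(1-Z^*Z)^{-1/2}$, and argues softly: the $(2,2)$ entry vanishes iff some nonzero $\xi$ satisfies $(CX+D)\xi=e_2$ and $(AX+B)\xi\perp e_2$, iff $\xi$ is orthogonal to both $(X^*C^*+D^*)e_1$ and $(X^*A^*+B^*)e_2$, iff these two vectors---which are precisely the columns of your $L=X^*U-V$---are linearly dependent; no determinant is ever computed. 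You instead prove the explicit identity $\det L=\det(1+X^*Z)\,\det\bigl((1-Z^*Z)^{-1/2}\bigr)\,\overline{[\mathcal{M}_{-Z}(X)]_{22}}$ by sliding the square-root factors through with the intertwining relations; I checked the factorization, including the cancellation of all four square roots and the evaluation $\det[e_1\ \ w]=w_2$, and it is right. Your route costs more bookkeeping but buys more: a quantitative identity with a manifestly nonzero constant rather than a bare vanishing equivalence, and it makes completely transparent where invertibility of $1+X^*Z$ (equivalently of $1+Z^*X$, i.e.\ the domain of $\mathcal{M}_{-Z}$) is used, whereas the paper needs the auxiliary vector $\xi=(CX+D)^{-1}e_2$ and a rescaling step. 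One caution, which applies equally to the paper: if you actually expand the off-diagonal entries of the Hermitian form (which you, like the paper, leave as ``the remaining entries come out the same way''), the matrix representing the form in the convention $\langle x,y\rangle=y^*x$ is the \emph{transpose} of the displayed $M(\rho)$; this is harmless for the lemma, since transposition of a Hermitian matrix changes neither its determinant nor its positive definiteness (in part (3) it amounts to using $\bar\alpha$ in place of $\alpha$), but it is worth flagging.
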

\begin{proof}
We may write
\[
\mathcal{M}_{-Z}(X) = (AX+B)(CX+D)^{-1}
\]
where
\[
\begin{array}{cc}
A = (1-ZZ^*)^{-\tfrac 12},  & B=(1-ZZ^*)^{-\tfrac 12}Z, \\
C = (1-Z^*Z)^{-\tfrac 12}Z^*, & D = (1-Z^*Z)^{-\tfrac 12}.
\end{array}
\]
With this notation equations (\ref{defuv}) become
\[
u(\al)=\al_1C^*e_1 + \al_2 A^* e_2, \quad v(\al)=-\al_1D^*e_1 -\al_2 B^*e_2.
\]
For any matrix $X$,
\begin{align} 
  [\mathcal{M}_{-Z}&(X)]_{22} =0  \nn \\
& \Leftrightarrow  \ip{(AX+B)(CX+D)^{-1} e_2}{ e_2 } =0   \nn\\
   { } &\Leftrightarrow  \mbox{ for some non-zero $\xi \in \C^2$ } 
\ip{(AX+B)\xi}{e_2} =0 \mbox{ and } (CX+D) \xi =e_2  \nn \\
   { } & \Leftrightarrow \mbox{ for some non-zero $\xi \in \C^2$ } \xi \perp
(X^*A^*+B^*)e_2 \mbox{ and } \xi \perp (X^*C^*+D^*)e_1 \nn\\
 { } & \Leftrightarrow \mbox{there exists } \al \in \C^2 \setminus \{0\} 
\mbox{ such that }  \nn\\
 &  \quad  \al_1(X^*C^*+D^*)e_1+ \al_2(X^*A^*+B^*)e_2 =0 \nn \\
{ } & \Leftrightarrow \mbox{there exists } \al \in \C^2 \setminus \{0\}
\mbox{ such that } \nn \\
    \nn &  \quad X^*(\al_1C^*e_1+\al_2A^*e_2) = -\al_1D^*e_1 - \al_2 B^*e_2\\
& \Leftrightarrow \mbox{there exists } \al \in \C^2 \setminus \{0\}
\mbox{ such that } X^*u(\al) = v(\al). \label{somealpha}
\end{align}
Hence statement (2) holds.
For any $\al$ there exists an $X$ such that  $X^*u(\al) = v(\al)$ and $\|X\| \leq \rho$ if and only if  $||v(\al)|| \leq \rho ||u(\al)||$.  Now
\begin{align}  \label{inuv}
 ||v(\al)||^2 - \rho^2 ||u(\al)||^2 &=\ip{(DD^*-\rho^2CC^*)e_1}{e_1}\al_1\bar\al_1 +
\ip{(BD^*-\rho^2AC^*)e_1}{e_1 } \al_1\bar\al_2 + \nn   \\
  &   \quad \ip{(DB^*-\rho^2CA^*)e_2}{e_1 }\al_2\bar\al_1 +
\ip{(BB^*-\rho^2AA^*)e_2}{e_2 } \al_2\bar\al_2  \nn \\
      &= \ip{ M(\rho)\al}{\al }.
\end{align}
Hence there exists an $X$ such that $||X|| \leq \rho$ and  $[\mathcal{M}_{-Z}(X)]_{22} =0$ if and only if $M(\rho)$ is not positive definite, that is, if and only if $\det M(\rho) \leq 0$, since it is easily seen that the $(1,1)$ entry of $M(\rho)$ is positive.  Statement (1) follows.

When $\det M(\rho) \leq 0$ we may find $\al \neq 0$ such that $\left< M(\rho)\al, \al\right> \leq 0$ and define $u(\al),v(\al)$ by equations (\ref{defuv}).  
Then $\|v(\al)\| \leq \rho\|u(\al)\|$.  If $u(\al)=0$ then also $v(\al)=0$ and equation (\ref{somealpha}) holds with $X=0$ and we have
\[
0=[\mathcal{M}(0)]_{22} = [Z]_{22}.
\]
If $[Z]_{22} \neq 0$ then $u(\al) \neq 0$ and an $X$ satisfying  the relations (\ref{somealpha}) and $\|X\| \leq \rho$ is $u(\al)v(\al)^*\|u(\al)\|^{-2}$.
\end{proof}
We denote by $B$ the Blaschke factor
\begin{equation} \label{defB}
B(\la) = \frac{\la_0 -\la}{1 - \bar\la_0 \la}.
\end{equation}
\begin{Lemma}\label{22zero}
Let $\la_0 \in \D \setminus \{0\}$, let $Z \in \C^{2\times 2}$ satisfy $\|Z\| <1$
and let $M(.)$ be given by equation \eqref{defM}.
\begin{enumerate}
\item[\rm (1)]
 There exists a function $G$ such that
\begin{equation} \label{propG}
  G \in \S_{2\times 2}, \quad [G(0)]_{22} = 0  \mbox{ and } G(\la_0) = Z
\end{equation}
if and only if $\det M(|\la_0|) \leq 0$.
\item[\rm (2)]
A function $G \in \S_{2\times 2}$ satisfies the conditions \eqref{propG}
if and only if there exists  $\al \in \C^2 \setminus\{0\}$ such that $\left< M(|\la_0|)\al,\al\right> \leq 0$ and a Schur function $Q$ such that $Q(0)^*\bar\la_0 u(\al) = v(\al)$
and
$G= \mathcal{M}_{-Z} \circ (BQ)$,
where
 $u(\al), v(\al)$ are given by equations \eqref{defuv}.
\item[\rm (3)]
In particular, if $[Z]_{22} \neq 0$ and $\al \in \C^2\setminus\{0\}$ satisfies
$\ip{ M(|\la_0|)\al}{\al} \leq 0$ then the function
\begin{equation} \label{Galpha}
G(\la) = \mathcal{M}_{-Z}\left( \frac{B(\la)u(\al)v(\al)^*}{\la_0 \|u(\al)\|^2} \right)
\end{equation}
satisfies the conditions \eqref{propG}.
 \end{enumerate}
\end{Lemma}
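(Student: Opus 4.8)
The plan is to carry out a Schur--Nevanlinna reduction of the interpolation problem (\ref{propG}) and then feed the resulting matrix condition into Lemma \ref{anX} with $\rho = |\la_0|$. Two elementary facts drive everything: the Blaschke factor satisfies $B(0) = \la_0$ and $B(\la_0) = 0$; and since $\mathcal{M}_Z^{-1} = \mathcal{M}_{-Z}$ and $\mathcal{M}_Z(Z) = 0$, we have $\mathcal{M}_{-Z}(0) = Z$.

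First I would reduce (\ref{propG}) to a statement about the single matrix $\la_0 Q(0)$. Given $G \in \S_{2\times 2}$ with $G(\la_0) = Z$, set $H = \mathcal{M}_Z \circ G$. Since $\|Z\| < 1$, $\mathcal{M}_Z$ is a biholomorphic self-map of the open ball of $\C^{2\times 2}$, so $H \in \S_{2\times 2}$ and $H(\la_0) = \mathcal{M}_Z(Z) = 0$. The crucial step is the matrix Schwarz lemma: a Schur function vanishing at $\la_0$ factors as $H = BQ$ with $Q$ a Schur function. Each entry $h_{ij}$ vanishes at the simple zero $\la_0$ of $B$, so $H/B$ is analytic, and applying the scalar division/Schwarz--Pick argument to $\langle H(\la)\xi,\eta\rangle$ for all unit vectors $\xi,\eta$ shows $\|Q(\la)\| \le 1$ on $\D$. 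Conversely, $\mathcal{M}_{-Z}\circ(BQ) \in \S_{2\times 2}$ for every Schur function $Q$, because $\|B(\la)Q(\la)\| < 1$ for $\la \in \D$ and $\mathcal{M}_{-Z}$ carries the open ball into itself. Hence $G \in \S_{2\times 2}$ with $G(\la_0) = Z$ if and only if $G = \mathcal{M}_{-Z}\circ(BQ)$ for some Schur function $Q$, in which case $G(\la_0) = \mathcal{M}_{-Z}(0) = Z$ automatically.

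It remains to transcribe $[G(0)]_{22} = 0$. With $G = \mathcal{M}_{-Z}\circ(BQ)$ and $B(0) = \la_0$ we get $G(0) = \mathcal{M}_{-Z}(\la_0 Q(0))$, so the condition reads $[\mathcal{M}_{-Z}(X)]_{22} = 0$ with $X := \la_0 Q(0)$, where $\|X\| \le |\la_0|$ as $\|Q(0)\| \le 1$; conversely any $X$ with $\|X\| \le |\la_0|$ arises this way by taking $Q$ constant. Now Lemma \ref{anX} applies with $\rho = |\la_0|$. Statement (1) is immediate from Lemma \ref{anX}(1): such an $X$ exists iff $\det M(|\la_0|) \le 0$. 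For statement (2), Lemma \ref{anX}(2) rewrites $[\mathcal{M}_{-Z}(X)]_{22} = 0$ as the existence of $\al \ne 0$ with $X^* u(\al) = v(\al)$, that is $Q(0)^* \bar\la_0 u(\al) = v(\al)$; the inequality $\langle M(|\la_0|)\al,\al\rangle \le 0$ is then forced by (\ref{inuv}), since $\|v(\al)\| = |\la_0|\,\|Q(0)^* u(\al)\| \le |\la_0|\,\|u(\al)\|$, while conversely this same inequality is exactly what permits a contraction $Q(0)$ solving the equation. For statement (3), when $[Z]_{22} \ne 0$ Lemma \ref{anX}(3) guarantees $u(\al) \ne 0$ and supplies the rank-one solution $X = u(\al)v(\al)^*/\|u(\al)\|^2$; taking $Q$ to be the constant $X/\la_0$ and substituting into $G = \mathcal{M}_{-Z}\circ(BQ)$ yields precisely the function (\ref{Galpha}), which satisfies $G(\la_0) = Z$ and $[G(0)]_{22} = 0$ by construction.

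The step that I expect to need the most care is the matrix Schwarz lemma, i.e. justifying that $H(\la_0) = 0$ forces a factorization $H = BQ$ with $Q$ contractive on all of $\D$: the entrywise divisibility is clear, but contractivity of the quotient must be obtained from the maximum principle applied to the scalar functions $\langle H(\la)\xi,\eta\rangle$. The degenerate situation $u(\al) = 0$ (equivalently $[Z]_{22} = 0$) also deserves a separate remark, since there $\langle M(|\la_0|)\al,\al\rangle \le 0$ forces $v(\al) = 0$ as well, so $Q(0)^*\bar\la_0 u(\al) = v(\al)$ holds for every $Q$, matching the case split in the formula of Lemma \ref{anX}(3).
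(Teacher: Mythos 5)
Your proof is correct and follows essentially the same route as the paper's: reduce via $\mathcal{M}_Z$ and the Blaschke factorization $H=BQ$ to a condition on the single matrix $\la_0 Q(0)$, then invoke Lemma \ref{anX} with $\rho=|\la_0|$ together with equation (\ref{inuv}). The only difference is cosmetic — you justify the matrix Schwarz--Pick factorization and the degenerate case $[Z]_{22}=0$ explicitly, which the paper leaves implicit, and you obtain statement (1) directly from Lemma \ref{anX}(1) rather than as a corollary of statement (2).
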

\begin{proof}
(2) If $G$ satisfies the conditions \eqref{propG}) then $\mathcal{M}_Z  \circ G \in \S_{2\times 2}$ vanishes at $\la_0$ and hence is of the form $BQ$ for some Schur function $Q$.  Then $G=\mathcal{M}_{-Z} \circ (BQ)$ and moreover
\[
[\mathcal{M}_{-Z}(\la_0 Q(0))]_{22} = 
[\mathcal{M}_{-Z} \circ (BQ)(0)]_{22} = [G(0)]_{22} = 0.
\]
Since $||\la_0 Q(0)|| \leq  |\la_0|$ Lemma \ref{anX} tells us that there exists $\al \neq 0$ such that $\left< M(|\la_0|)\al,\al \right> \leq 0$ and $(\la_0 Q(0))^* u(\al) = v(\al)$.  Thus necessity holds in statement (2).  The argument is reversible, and so (2) holds.

It is clear from statement (2) and equation (\ref{inuv}) that there is a $G$ satisfying conditions (\ref{propG}) if and only if $M(|\la_0|)$ is not positive definite.  Hence statement (1) holds.

(3) is also an easy consequence of (2), obtained by taking $Q$ to be the constant function whose value is the unique rank 1 matrix satisfying $Q^* \bar\la_0 u(\al)=v(\al)$
(as in Lemma \ref{anX}(3)).
\end{proof}
\begin{Remark}   \rm  
If $[Z]_{22} = 0$ then the constant function $G(\la)=Z$ has the desired properties.
\end{Remark}

\begin{Lemma} \label{ifsomethenall}
Let $\varphi: \D \to \bar\E$ be analytic.  If $\varphi$ maps some point of $\D$ into $\E$ then $\ph(\D) \subset \E$.
\end{Lemma}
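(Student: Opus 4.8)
The plan is to exploit the characterisation of membership of $\E$ and $\bar\E$ in terms of the linear fractional functions $\Psi(\omega,\cdot)$, $\omega\in\T$, together with two applications of the maximum modulus principle. Write $\varphi=(\varphi_1,\varphi_2,\varphi_3)$ for the component functions of $\varphi$, each analytic on $\D$. Since $\varphi(\D)\subset\bar\E$, putting $z=0$ in condition (0) of Theorem \ref{closE} shows that $|\varphi_2(\lambda)|\le 1$ for every $\lambda\in\D$. The first step is to pin down the behaviour of $\varphi_2$: by hypothesis there is a point $\lambda_1\in\D$ with $\varphi(\lambda_1)\in\E$, and every point of $\E$ has second coordinate of modulus strictly less than $1$ (from condition (3) of Theorem \ref{characE}, whose right-hand side is $1-|x_2|^2$), so $|\varphi_2(\lambda_1)|<1$. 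Hence $\varphi_2$ is not a unimodular constant, and since $|\varphi_2|\le 1$ throughout $\D$ the maximum modulus principle forces $|\varphi_2(\lambda)|<1$ for all $\lambda\in\D$.

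This strict inequality is exactly what is needed to make the relevant composition well defined. For each fixed $\omega\in\T$ the denominator $\omega\varphi_2(\lambda)-1$ never vanishes on $\D$, so
\[
g_\omega(\lambda)=\Psi(\omega,\varphi(\lambda))=\frac{\omega\varphi_3(\lambda)-\varphi_1(\lambda)}{\omega\varphi_2(\lambda)-1}
\]
is a well-defined analytic function on $\D$. Moreover, for each $\lambda$ the point $\varphi(\lambda)$ lies in $\bar\E$ with $|\varphi_2(\lambda)|<1$, so $\Psi(\cdot,\varphi(\lambda))$ is continuous on $\bar\D$, and condition (2) of Theorem \ref{closE} gives $|g_\omega(\lambda)|\le\|\Psi(\cdot,\varphi(\lambda))\|_{H^\infty}\le 1$. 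Thus $g_\omega$ is a self-map of $\bar\D$ for every $\omega\in\T$.

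Now I would run the maximum modulus principle a second time, this time on $g_\omega$. Since $\varphi(\lambda_1)\in\E$ and $|\varphi_2(\lambda_1)|<1$, condition (2) of Theorem \ref{characE} gives $\|\Psi(\cdot,\varphi(\lambda_1))\|_{H^\infty}<1$, so in particular $|g_\omega(\lambda_1)|<1$. Hence $g_\omega$ is not a unimodular constant, and as $|g_\omega|\le 1$ on $\D$ the maximum principle yields $|g_\omega(\lambda)|<1$ for every $\lambda\in\D$. This holds for each $\omega\in\T$, so for a fixed $\lambda$ the continuous function $\omega\mapsto|\Psi(\omega,\varphi(\lambda))|$ is strictly less than $1$ on the compact set $\T$, whence $\|\Psi(\cdot,\varphi(\lambda))\|_{H^\infty}=\sup_{\omega\in\T}|\Psi(\omega,\varphi(\lambda))|<1$. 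Since $|\varphi_2(\lambda)|<1$, condition (2) of Theorem \ref{characE} then gives $\varphi(\lambda)\in\E$. As $\lambda\in\D$ was arbitrary, $\varphi(\D)\subset\E$, as required.

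The one point that needs genuine care is the first step, the claim that $|\varphi_2|<1$ throughout $\D$. Without it the formula for $g_\omega$ could have poles and the composition $\Psi(\omega,\varphi(\cdot))$ need not even be defined; this is precisely the degenerate situation $|\varphi_2|\equiv 1$, in which (by condition (2) of Theorem \ref{closE}) $\varphi$ lands entirely in triangular boundary points and so never meets $\E$. I therefore expect the main obstacle to be organising the two applications of the maximum principle so that this degenerate case is excluded at the outset; once $|\varphi_2|<1$ is secured, the remainder is a direct translation between membership of $\E$ and the sup-norm bound $D(\varphi(\lambda))<1$.
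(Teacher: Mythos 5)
Your proof is correct and follows essentially the same route as the paper's: first force $|\varphi_2|<1$ on $\D$ (the paper cites the Schwarz--Pick lemma where you use the maximum modulus principle), then apply the maximum principle to the scalar functions $\la\mapsto\Psi(z,\varphi(\la))$, and conclude via condition (2) of Theorem \ref{characE}. The only cosmetic difference is that you range $z$ over $\T$ and recover $\|\Psi(\cdot,\varphi(\la))\|_{H^\infty}<1$ by compactness of $\T$ plus one more maximum-modulus step, whereas the paper ranges $z$ over the closed disc $\Delta$ directly.
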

\begin{proof}
Suppose that $\ph(\la_0)\in\E$ for some $\la_0\in\D$.  Since $\ph_2:\D\to\Delta$ is analytic and $|\ph_2(\la_0)| < 1$ it follows from the Schwarz-Pick Lemma that $\ph_2(\D)
\subset \D$.  Fix $z\in\Delta$.  The function $\la \mapsto \Psi(z,\ph(\la))$ is well defined and analytic on $\D$ and maps $\la_0$ into $\D$; hence it maps all of $\D$ into $\D$.
Now fix $\la\in\D$: the map $\Psi(.,\ph(\la)) $ maps $\Delta$ to $\D$, and hence, by Theorem \ref{characE}, $\ph(\la)\in\E$.
\end{proof}
We now prove the Schwarz Lemma for $\E$, the main result of the paper. \\
{\bf Proof of Theorem \ref{schwarzL}.}
It is clear from Lemma \ref{ifsomethenall} that (1)$\Leftrightarrow$(1$^\prime$).\\
 (1)$\Rightarrow$(2)  Let $\ph: \D \to \E$ be as in (1).  For any $\omega \in \T \quad \Psi(\omega,\ph(.))$ is an analytic self-map of $\D$ and
\[
\Psi(\omega, \ph(0)) = \Psi(\omega,0,0,0) = 0.
\]
By Schwarz' Lemma
\[
|\Psi(\omega,x)| = |\Psi(\omega,\ph(\la_0))| \leq |\la_0|.
\]
On taking the supremum over $\omega \in \T$ we find that $D(x) \leq |\la_0|$, that is,
\[
\frac{|a-\bar b p|+ |ab-p|}{1-|b|^2} \leq |\la_0|.
\]
By the same reasoning with $a,b$ interchanged we have
\[
\frac{|b-\bar a p|+|ab-p|}{1-|a|^2} \leq |\la_0|,
\]
and so (2) holds.\\
(2)$\Rightarrow$(3) is trivial.\\
(4)$\Rightarrow$(1)  If $F=[F_{ij}]$ is as in (4) then the function
\[
\ph = (F_{11}, F_{22}, \det F)
\]
is analytic in $\D$, satisfies
\[
\ph(0) = (0,0,0), \quad \ph(\la_0) = x
\]
and by condition (7) of Theorem \ref{closE}, satisfies $\ph(\D) \subset \bar\E$.\\
(3)$\Rightarrow$(4)  Suppose that $|b| \leq |a|$ and $D(x) \leq |\la_0|$.  Consider the case that $ab=p$.   Here $D(x) = |a|$, and so 
\[
|b| \leq |a| \leq |\la_0|.
\]
By Schwarz' Lemma there are analytic self-maps $f,g$ of $\D$ such that $f(0)=g(0)=0,
\quad f(\la_0) = a$ and $g(\la_0) = b$.  The function $F= \mathrm{diag} (f,g)$ 
then has the required properties, and so (3)$\Rightarrow$(4) when $ab=p$.

Now consider the case that $ab \neq p$. 
  We shall construct $F \in \S_{2\times 2}$ such that
\[
F(0) = \left[ \begin{array}{cc} 0 & *\\ 0 & 0\end{array} \right], \quad
F(\la_0) = \left[ \begin{array}{cc} a &  w\\ \la_0 w & b\end{array} \right].
\]
where  $w$ is a square root of $(ab-p)/\la_0$.
It suffices to find $G \in \S_{2	\times 2}$ such that conditions (\ref{propG}) above hold for
\begin{equation} \label{defZ}
Z = \left[ \begin{array}{cc} a/\la_0 &  w \\ w & b
\end{array} \right],
\end{equation}
for then the function $F(\la)= G(\la)\mathrm{diag}(\la,1)$ has the required properties.  To obtain such a $G$ we shall invoke Lemma \ref{22zero}, which we can do provided that $||Z|| < 1$.

Let
\[
a'=a/\la_0, \quad b'= b/\la_0, \quad p' = p/\la_0.
\]
Since $D(x) \leq |\la_0|$ we have
\[
D(a',b,p') \leq 1 \mbox{ and } D(b',a,p') \leq 1,
\]
so that $(a',b,p'),  (a,b',p')  \in \bar\E$.  Hence
\[
|b| \leq |a| \leq |\la_0|, \quad |p| \leq |\la_0| < 1.
\]
It follows that $a \neq \bar b p$ and
\begin{equation} \label{Agt1}
|ab -p| < |a-\bar bp| + |ab-p| \leq |\la_0|(1-|b|^2).
\end{equation}
Moreover, since $(a, b',p') \in \bar\E$, condition (5) of Theorem \ref{closE} shows that
\[
1 - |a|^2 -|b'|^2 +|p'|^2 \geq \frac{2}{|\la_0|} |ab-p|.
\]
That is,
\begin{equation} \label{dfY1}
 2 \leq Y_1 \stackrel{\rm def}{=} 
 \frac{|\la_0|}{|ab-p|} \left( 1-|a|^2 - \frac{|b|^2}{|\la_0|^2} + \frac{|p|^2}{|\la_0|^2}\right)
\end{equation}
with strict inequality if and only if $(a,b',p') \in \E$, that is, if and only if 
$D(b,a,p) < |\la_0|$.
Likewise
\begin{equation} \label{dfY2}
 2 \leq Y_2 \stackrel{\rm def}{=} 
 \frac{|\la_0|}{|ab-p|} \left( 1-\frac{|a|^2 }{|\la_0|^2}- |b|^2+ \frac{|p|^2}{|\la_0|^2}\right),
\end{equation}
with strict inequality if and only if $D(a,b,p) < |\la_0|$.
\begin{Lemma} \label{normZlt1}
Let $x=(a,b,p) \in \E, \la_0 \in \D\setminus\{0\}$ and $w^2= (ab-p)/\la_0$.   Suppose that $|b| \leq |a|, \, D(x) \leq |\la_0|$ and $ab \neq p$.  Let $Z$ be defined by equation (\ref{defZ}).  Then  $||Z|| \leq 1$, with equality  if and only if $D(x)=|\la_0|$.
\end{Lemma}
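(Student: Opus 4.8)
The plan is to recognise $Z$ as one of the two symmetric matrices representing the point $(a',b,p')$ under the map $\pi$ of (\ref{defpi}), and then to test the positivity of $1-Z^*Z$ directly via Lemma \ref{AstarA}. Applying that lemma with $x_1=a/\la_0$, $x_2=b$, $x_3=p/\la_0$ and both off-diagonal entries equal to $w$ (the role of $b,c$ there), the hypothesis $w^2=(ab-p)/\la_0 = x_1x_2-x_3$ is exactly the relation required, so that $\det Z = p/\la_0$ and formulae (\ref{xcon}) and (\ref{formdet}) apply verbatim. Since $\|Z\|\le 1$ is equivalent to $1-Z^*Z\ge 0$, and a $2\times 2$ Hermitian matrix is positive semidefinite precisely when both its diagonal entries and its determinant are nonnegative, I would reduce the entire lemma to three scalar inequalities in the quantities $|a/\la_0|^2$, $|b|^2$ and $|w|^2 = |ab-p|/|\la_0|$.

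The two diagonal conditions I would obtain from the characterizations of $\bar\E$. As already noted in the proof of Theorem \ref{schwarzL}, the bound $D(x)\le|\la_0|$ gives $D(a',b,p') = D(x)/|\la_0|\le 1$, whence $(a',b,p')\in\bar\E$; moreover $(a',b,p')$ is non-triangular because $ab\ne p$, so the extra triangularity clauses in Theorem \ref{closE} are vacuous. Conditions (3) and (3$'$) of Theorem \ref{closE} for $(a',b,p')$ then read $|a'b-p'|\le 1-|b|^2$ and $|a'b-p'|\le 1-|a'|^2$; as $|a'b-p'| = |w|^2$, these are precisely the statements that the $(2,2)$ and $(1,1)$ entries of $1-Z^*Z$ are nonnegative.

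The determinant is the crux of the argument and also carries the equality statement. By (\ref{formdet}),
\[
\det(1-Z^*Z) = 1 - \frac{|a|^2}{|\la_0|^2} - |b|^2 + \frac{|p|^2}{|\la_0|^2} - \frac{2|ab-p|}{|\la_0|},
\]
and comparing this with the quantity $Y_2$ of (\ref{dfY2}) yields the clean identity $\det(1-Z^*Z) = \tfrac{|ab-p|}{|\la_0|}(Y_2-2)$. Since $Y_2\ge 2$, the determinant is nonnegative, which together with the diagonal estimates shows $1-Z^*Z\ge 0$ and hence $\|Z\|\le 1$. For the equality clause I would use that, $1-Z^*Z$ being positive semidefinite, its smaller eigenvalue vanishes if and only if its determinant vanishes; equivalently $\|Z\|=1$ if and only if $\det(1-Z^*Z)=0$. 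By the identity above this occurs exactly when $Y_2=2$, and, by the remark accompanying (\ref{dfY2}), $Y_2=2$ if and only if $D(x)=|\la_0|$.

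The only delicate point — and the one I expect to be the main obstacle — is the bookkeeping: verifying that the determinant identity with $Y_2$ is exact and that the diagonal entries match conditions (3) and (3$'$) after rescaling by $\la_0$. Once these routine computations are carried out, everything follows from Lemma \ref{AstarA} and the characterizations of $\bar\E$ in Theorem \ref{closE}, with no further analytic input required.
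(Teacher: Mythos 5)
Your proof is correct, and it rests on the same pillars as the paper's own argument: Lemma \ref{AstarA} to compute $1-Z^*Z$, the characterizations in Theorems \ref{closE} and \ref{characE} to test positive semidefiniteness via the two diagonal entries and the determinant, and the identity $\det(1-Z^*Z)=\tfrac{|ab-p|}{|\la_0|}\,(Y_2-2)$ (which is exact, as you suspected). Two details differ, and both are in your favour. First, you work exclusively with the single point $(a',b,p')=\pi(Z)$, extracting both diagonal bounds from conditions (3) and (3$'$) of Theorem \ref{closE} at that point and the determinant bound from condition (5); the paper instead invokes the pair $(a',b,p'),\,(a,b',p')\in\bar\E$, and the second membership is equivalent to $\frac{|b-\bar ap|+|ab-p|}{1-|a|^2}\le|\la_0|$ --- precisely Corollary \ref{whichD}, which the paper asserts as immediate at this stage but in fact deduces only later \emph{from} Theorem \ref{schwarzL}; your one-point argument sidesteps that dependency entirely, so it is logically tighter, not merely shorter. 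Second, you settle the equality clause through the determinant alone: once $1-Z^*Z\ge0$ is known, $\|Z\|=1$ holds iff $\det(1-Z^*Z)=0$, iff $Y_2=2$, iff $D(x)=|\la_0|$ by the strictness statement attached to (\ref{dfY2}) (whose derivation uses only $(a',b,p')$, so no circularity enters); the paper instead tracks strict positivity of all three minors and of both reference points. One cosmetic slip: conditions (3) and (3$'$) do not literally ``read'' $|a'b-p'|\le1-|b|^2$ and $|a'b-p'|\le1-|a'|^2$ --- their left-hand sides carry the additional non-negative terms $|a'-\bar bp'|$ and $|b-\bar a'p'|$ --- but discarding those terms is exactly the harmless weakening your argument needs.
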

\begin{proof}
We have
\[
1-Z^*Z = \left[ \begin{array}{cc} \ds
 1- \left|\frac{a}{\la_0}\right|^2 - |w|^2 & \ds
-\frac{\bar a w}{\bar\la_0} -b\bar w \\
 \ds -\frac{a\bar w}{\la_0} - \bar b w & \ds
1 -|b|^2 -|w|^2 \end{array} \right ],
\]
\[
\det(1-Z^*Z) = 1 - \left|\frac{a}{\la_0}\right|^2 -|b|^2 +
\left|\frac{p}{\la_0}\right|^2 -\frac{2|ab-p|}{|\la_0|}
\]
(see equation (\ref{formdet}) above).   Since $(a',b,p'), (a,b',p') \in \bar\E$, conditions ($3'$) and ($3$) respectively of Theorem \ref{closE} show that the diagonal entries of 
$1-Z^*Z$ are non-negative, while condition (5) of the same theorem shows that 
$\det(1-Z^*Z) \geq 0$.  Hence $1-Z^*Z \geq 0$.  By the corresponding conditions in Theorem \ref{characE}, the diagonal entries and determinant are all strictly positive if and only if $(a',b,p'), (a,b',p') \in \E$,  which is so if and only if $D(x) < |\la_0|$.

\end{proof}
To apply Lemma \ref{22zero} we need to know the sign of $\det M(|\la_0|).$  A routine (if laborious) calculation gives the following.
\begin{Lemma} \label{gotdetM}
Under the assumptions of Lemma {\rm \ref{normZlt1}}, if $D(x) < |\la_0|$ and $M(|\la_0|)$ is defined by equation \eqref{defM}  then
\begin{equation}\label {formM}
M(|\la_0|)\det(1-Z^*Z) =
\end{equation}
\[
  \left[ \begin{array}{cc}
1- |a|^2-|b|^2+|p|^2-|ab-p|\left( |\la_0|+\frac{1}{|\la_0|}\right) & \ds
(1-|\la_0|^2) \left( w+ \frac{p\bar w}{\la_0} \right) \\ \ds
(1-|\la_0|^2) \left(\bar w+ \frac{ \bar p w}{\bar\la_0}\right) & 
\begin{array}{c}
 -|\la_0|^2 +|a|^2+|b|^2- \left|\frac{p}{\la_0}\right|^2\\
+|ab-p|\left( |\la_0|+ \frac{1}{|\la_0|} \right) \end{array}
\end{array} \right]
\]
and
\begin{equation}\label {formdetM}
\det \left(M(|\la_0|)\det(1-Z^*Z)\right) = -(y-y_1)(y-y_2)
\end{equation}
where
\begin{eqnarray*}
 y &=& 2|ab-p| ,\\
y_1&=& |\la_0|\left( 1-|a|^2 -\left|\frac{b}{\la_0}\right|^2 + \left|\frac{p}{\la_0}\right|^2\right)=|ab-p|Y_1, \\
y_2 &=& |\la_0|\left( 1 -\left|\frac{a}{\la_0}\right|^2 -|b|^2+ \left|\frac{p}{\la_0}\right|^2\right)=|ab-p|Y_2. 
\end{eqnarray*} 
\end{Lemma}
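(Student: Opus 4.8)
The plan is to compute $M(|\la_0|)$ entrywise, to clear the matrix inverses by multiplying throughout by $\delta:=\det(1-Z^*Z)$, and to exploit two structural features of the matrix $Z$ of (\ref{defZ}). Since $Z$ is symmetric, $Z^*Z$ and $ZZ^*$ are entrywise complex conjugates of each other, so $\det(1-ZZ^*)=\det(1-Z^*Z)=\delta$ and $1-ZZ^*=\overline{1-Z^*Z}$; moreover, by Lemma \ref{normZlt1}, the hypothesis $D(x)<|\la_0|$ forces $\|Z\|<1$, hence $\delta>0$ and every inverse in (\ref{defM}) is legitimate. Because the inverse of a $2\times2$ matrix is its adjugate over its determinant, $(1-Z^*Z)^{-1}\delta$ and $(1-ZZ^*)^{-1}\delta$ are adjugates, polynomial in the entries of $Z$. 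Throughout I write $\rho=|\la_0|$ and $m_{ij}$ for the $(i,j)$ entry of $M(\rho)\delta$; note $\det\!\big(M(\rho)\delta\big)=m_{11}m_{22}-m_{12}m_{21}$.

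For the two diagonal entries I would first use the additive splittings $1-\rho^2Z^*Z=(1-\rho^2)I+\rho^2(1-Z^*Z)$ and $ZZ^*-\rho^2=-(1-ZZ^*)+(1-\rho^2)I$, which collapse the matrix products in (\ref{defM}) so that the $(1,1)$ and $(2,2)$ entries of $M(\rho)$ reduce to $(1-\rho^2)[(1-Z^*Z)^{-1}]_{11}+\rho^2$ and $-1+(1-\rho^2)[(1-ZZ^*)^{-1}]_{22}$; each bracketed inverse entry is a single diagonal cofactor over $\delta$. Multiplying by $\delta$, inserting $\det(1-Z^*Z)$ from (\ref{formdet}) and substituting $|w|^2=|ab-p|/\rho$, one collects terms to obtain the diagonal entries of (\ref{formM}). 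For the off-diagonal entries I would form $\operatorname{adj}(1-ZZ^*)\,Z$ and $Z^*\operatorname{adj}(1-ZZ^*)$ and extract, respectively, their $(2,1)$ and $(1,2)$ entries (the index flips come from (\ref{defM})). Here the decisive simplification is $w(1-|w|^2)=w-w^2\bar w=w-\tfrac{ab-p}{\la_0}\bar w$, coming from $w^2=(ab-p)/\la_0$, which lets the combinations collapse to $w+\tfrac{p}{\la_0}\bar w$ and its conjugate analogue, giving $m_{12}=(1-\rho^2)(w+p\bar w/\la_0)$ and $m_{21}=(1-\rho^2)(\bar w+\bar p w/\bar\la_0)$, as in (\ref{formM}).

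For (\ref{formdetM}) I would then expand $m_{11}m_{22}-m_{12}m_{21}$ as a polynomial in $q:=|ab-p|$. The product $m_{11}m_{22}$ is a product of two factors each affine in $q$, while $m_{12}m_{21}=(1-\rho^2)^2(w+p\bar w/\la_0)(\bar w+\bar p w/\bar\la_0)$, in which the only non-modulus quantity arising is $\mathrm{Re}(\bar p\,ab)$. The crucial step is to eliminate it through the identity $\mathrm{Re}(\bar p\,ab)=\tfrac12(|a|^2|b|^2+|p|^2-q^2)$, obtained by expanding $q^2=|ab-p|^2$; after this substitution $\det(M(\rho)\delta)$ depends only on $|a|,|b|,|p|$ and $q$ and is a quadratic in $y=2q$. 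A short computation shows its leading coefficient is $-4$, matching $-(y-y_1)(y-y_2)$, and it then remains only to verify that the $y_1,y_2$ of (\ref{dfY1})--(\ref{dfY2}) are its roots, by matching the linear and constant coefficients as functions of $|a|^2,|b|^2,|p|^2$.

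I expect the only genuine difficulty to be the bookkeeping in this last expansion, and in particular the recognition of the single non-obvious cancellation that drives the whole result: the phase term $\mathrm{Re}(\bar p\,ab)$ produced by the off-diagonal product must be converted via $q^2=|a|^2|b|^2-2\mathrm{Re}(\bar p\,ab)+|p|^2$ and then cancels against the matching contribution of $m_{11}m_{22}$, leaving a factorisable expression in the moduli alone. Everything else is routine algebra, consistent with the authors' own description of the computation as ``routine (if laborious)''.
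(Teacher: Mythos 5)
Your proposal is correct, and it is essentially the paper's own argument: the paper offers no details, describing the lemma only as ``a routine (if laborious) calculation,'' and your computation is exactly that calculation carried out sensibly — the splittings $1-\rho^2Z^*Z=(1-\rho^2)I+\rho^2(1-Z^*Z)$ and $ZZ^*-\rho^2=-(1-ZZ^*)+(1-\rho^2)I$, the adjugate trick, the cancellation $w-|w|^2w+\tfrac{ab}{\la_0}\bar w=w+\tfrac{p}{\la_0}\bar w$ via $w^2=(ab-p)/\la_0$, and the elimination of $\mathrm{Re}(\bar p\,ab)$ through $q^2=|a|^2|b|^2-2\mathrm{Re}(\bar p\,ab)+|p|^2$ all check out, as do the leading coefficient $-4$ and the matching of the remaining coefficients of the quadratic in $y$.
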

We resume the proof that (3)$\Rightarrow$(4) when $ab \neq p$ and $|b|\leq |a|$.
Suppose first that $D(x) < |\la_0|$.
By Lemmas \ref{normZlt1} and \ref{gotdetM}
we have $||Z|| <1$ and
\[
\det(M(|\la_0|)\det(1-Z^*Z)) = - |ab-p|^2(2-Y_1)(2-Y_2) < 0,
\]
since $Y_1,Y_2 > 2$ by inequalities (\ref{dfY1}), (\ref{dfY2}).  Since $\det(1-Z^*Z) >0$ it follows that $\det M(|\la_0|) <0$.  By Lemma \ref{22zero} there exists 
$G \in \S_{2\times 2}$ such that $[G(0)]_{22} = 0$ and $G(\la_0) = Z$.  Let
\[
F(\la) = G(\la) \mathrm{diag}(\la, 1), \quad \la \in \D.
\]
Then $F \in \S_{2\times 2}$,
\begin{equation} \label{propF}
F(0) = \left[ \begin{array}{cc} 0 & * \\ 0 & 0  \end{array} \right] \mbox { and }
F(\la_0) = \left[ \begin{array}{cc} a &  w \\ \la_0 w & b \end{array} \right]
\end{equation}
where $w^2 = (ab-p)/\la_0$.  Since $\det F(\la_0)=p$ we have (3)$\Rightarrow$(4) in the case that $|b| \leq |a|$ and
$D(x) < |\la_0|.$  Similarly it holds if $|a| \leq |b|$ and $D(x) < |\la_0|$.

Now suppose that $D(x) = |\la_0|$.  Write $\la_\ep = \la_0(1+\ep)^2$ for $\ep >0$ so small that $|\la_\ep| < 1$.  Note that 
\[
\left(\frac{w}{1+\ep}\right)^2 = \frac{ab-p}{\la_\ep}.
\]
By the above reasoning there exists $F_\ep \in \S_{2\times 2}$ such that
\[
F_\ep(0) = \left[ \begin{array}{cc} 0 & * \\ 0 & 0  \end{array} \right] \mbox { and }
F_\ep(\la_0) = \left[ \begin{array}{cc} a & \frac {w}{1+\ep} \\ 
(1+\ep)\la_0 w & b \end{array} \right].
\]
By Montel's theorem some subsequence of $F_\ep$ converges uniformly on compact subsets
of $\D$ as $\ep \to 0$ to an analytic function $F$.  Clearly $F$ is in the Schur class and
satisfies equation (\ref{propF}).  Hence (3)$\Rightarrow$(4). \hfill $\Box$
\begin{Corollary}\label{KandC}
For any $x=(a,b,p) \in \E$
\begin{eqnarray*}
\mathcal{C}_\E(0,x)&=&\mathcal{K}_\E(0,x)=\delta_\E(0,x) \\
    &=&	\max\left\{ \tanh^{-1}  \frac{|a-\bar b p|+ |ab-p|}{1-|b|^2}, 
    \tanh^{-1} \frac{|b-\bar a p|+|ab-p|}{1-|a|^2}        \right\}
\end{eqnarray*}
where $\mathcal{C}_\E, \mathcal{K}_\E$ and $\delta_\E$ are the Carath\'eodory distance, Kobayashi distance and Lempert functions of $\E$ respectively.
\end{Corollary}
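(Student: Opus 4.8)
The plan is to combine the three standard inequalities between invariant functions with the Schwarz lemma just established. Recall that on any domain one always has
\[
\mathcal{C}_\E(0,x) \le \mathcal{K}_\E(0,x) \le \delta_\E(0,x),
\]
so it suffices to bound $\delta_\E$ above by the claimed value and $\mathcal{C}_\E$ below by the same value. Writing
\[
m = \max\left\{ D(a,b,p),\ D(b,a,p) \right\}
= \max\left\{ \frac{|a-\bar b p|+|ab-p|}{1-|b|^2},\ \frac{|b-\bar a p|+|ab-p|}{1-|a|^2} \right\}
\]
(the two entries being $D(a,b,p)$ and $D(b,a,p)$ via formula (\ref{formD})), the asserted quantity is exactly $\tanh^{-1} m$. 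Since $x\in\E$, conditions (3) and (3$'$) of Theorem \ref{characE} give $m<1$, so $\tanh^{-1}m$ is finite, and one checks that $m=0$ forces $x=0$.

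For the upper bound on $\delta_\E$ I would read the Lempert function straight off Theorem \ref{schwarzL}. By definition $\delta_\E(0,x)$ is the infimum of $\tanh^{-1}|\la_0|$ over all analytic $\ph:\D\to\E$ with $\ph(0)=(0,0,0)$ and $\ph(\la_0)=x$. The equivalence (1$'$)$\Leftrightarrow$(2) of Theorem \ref{schwarzL} shows that, for $\la_0\in\D\setminus\{0\}$, such a disc exists precisely when $m\le|\la_0|$; hence (for $x\neq 0$) the set of admissible moduli $|\la_0|$ is exactly $[m,1)$, and since $\tanh^{-1}$ is increasing the infimum is attained at $|\la_0|=m$, giving $\delta_\E(0,x)=\tanh^{-1}m$. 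The case $x=0$ is trivial, all three functions vanishing in agreement with $m=0$.

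For the matching lower bound on $\mathcal{C}_\E$ I would produce explicit extremal competitors. For each $\omega\in\T$ the map $x\mapsto\Psi(\omega,x)$ is analytic from $\E$ into $\D$: the denominator $x_2\omega-1$ is non-zero because $|x_2|<1$ on $\E$, and $|\Psi(\omega,x)|\le D(x)<1$ by condition (2) of Theorem \ref{characE}; moreover $\Psi(\omega,0)=0$. Taking the supremum over $\omega\in\T$, and using that a linear fractional map with no pole in $\bar\D$ attains its maximum modulus on $\T$, so that $\sup_{\omega\in\T}|\Psi(\omega,x)|=D(a,b,p)$, yields $\mathcal{C}_\E(0,x)\ge\tanh^{-1}D(a,b,p)$. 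Applying the same argument to $\Upsilon(\omega,\cdot)$, where $\Upsilon(z,x)=\Psi(z,x_2,x_1,x_3)$ is likewise analytic from $\E$ to $\D$ (now using $|x_1|<1$) and vanishes at $0$, gives $\mathcal{C}_\E(0,x)\ge\tanh^{-1}D(b,a,p)$. Hence $\mathcal{C}_\E(0,x)\ge\tanh^{-1}m$.

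Combining the two bounds,
\[
\tanh^{-1}m \le \mathcal{C}_\E(0,x) \le \mathcal{K}_\E(0,x) \le \delta_\E(0,x) = \tanh^{-1}m,
\]
forcing equality throughout and proving the corollary. The substantive work is already contained in Theorem \ref{schwarzL}; this corollary is essentially its repackaging, and the only genuinely new point is the Carath\'eodory lower bound. I expect the main (and only minor) obstacle to be verifying that the elementary functions $\Psi(\omega,\cdot)$ and $\Upsilon(\omega,\cdot)$ are Carath\'eodory-extremal, namely that their supremum over $\omega\in\T$ reproduces $D$ exactly; but this is immediate from the maximum modulus principle once one notes these M\"obius functions have no pole in $\bar\D$.
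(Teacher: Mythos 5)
Your proposal is correct and follows essentially the same route as the paper: the Lempert function is read off from the equivalence (1$'$)$\Leftrightarrow$(2) of Theorem \ref{schwarzL}, the Carath\'eodory lower bound comes from the competitors $\Psi(\omega,\cdot)$ (and, by the symmetry $a\leftrightarrow b$, $\Upsilon(\omega,\cdot)$) with supremum over $\omega\in\T$ recovering $D$, and the chain $\mathcal{C}_\E\le\mathcal{K}_\E\le\delta_\E$ closes the argument. Your explicit verification that the admissible moduli form $[m,1)$ and that $m=0$ forces $x=0$ merely spells out what the paper compresses into ``a re-statement of (1$'$)$\Leftrightarrow$(2)''.
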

For definitions of $\mathcal{C}_\E, \mathcal{K}_\E$ and $\delta_\E$ see for example \cite[Chapter 1]{JP}.
\begin{proof}
The equation
\[
\delta_\E(0,x) =
	\max\left\{ \tanh^{-1}  \frac{|a-\bar b p|+ |ab-p|}{1-|b|^2}, 
    \tanh^{-1} \frac{|b-\bar a p|+|ab-p|}{1-|a|^2}        \right\}
\]
is simply a re-statement of the equivalence (1$^\prime$)$\Leftrightarrow$(2) of Theorem \ref{schwarzL}.  By definition
\[
\mathcal{C}_\E(0,x)=\sup \tanh^{-1}|F(x)|
\]
over all analytic maps $F:\E\to\D$ such that $F(0)=0$.  On taking $F=\Psi(\omega,.),  \omega\in\T$, we find
\[
\mathcal{C}_\E(0,x) \geq \sup_{\omega\in\T} \tanh^{-1}|\Psi(\omega,x)| = \tanh^{-1}\frac{|a-\bar b p|+ |ab-p|}{1-|b|^2}
\]
and by symmetry
\[
\mathcal{C}_\E(0,x) \geq \max\left\{ \tanh^{-1}  \frac{|a-\bar b p|+ |ab-p|}{1-|b|^2}, 
    \tanh^{-1} \frac{|b-\bar a p|+|ab-p|}{1-|a|^2}        \right\} =\delta_\E(0,x).
\]
It is always true that
\[
\mathcal{C}_\E \leq\mathcal{K}_\E \leq\delta_\E;
\]
 the Corollary follows.
\end{proof}
\begin{Corollary} \label{whichD}
If $(a,b,p) \in \E$ and $|b| \leq |a|$ then
\[
\frac{|b-\bar a p|+|ab-p|}{1-|a|^2} \leq \frac{|a-\bar bp|+|ab-p|}{1-|b|^2}.
\]
\end{Corollary}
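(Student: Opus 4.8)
The plan is to read the inequality directly off the equivalence of conditions (2) and (3) in Theorem \ref{schwarzL}, applied at a judiciously chosen value of $\la_0$. Writing $x=(a,b,p)$, the first argument of the maximum in condition (2) is precisely $D(x)=\|\Psi(\cdot,x)\|_{H^\infty}$ and the second is $D(b,a,p)=\|\Upsilon(\cdot,x)\|_{H^\infty}$; the corollary asserts exactly that when $|b|\le|a|$ the first dominates the second. So the whole content will be carried by the Schwarz lemma, and the proof reduces to choosing $\la_0$ correctly and checking that the choice is legitimate.

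First I would dispose of the degenerate case $x=0$, in which both sides vanish. For $x\neq 0$ I claim $D(x)>0$. Since $x\in\E$ we have $|b|<1$, so $D(x)=\tfrac{|a-\bar bp|+|ab-p|}{1-|b|^2}=0$ would force $a=\bar bp$ and $p=ab$, whence $a=\bar b(ab)=|b|^2a$ and, as $|b|<1$, $a=0$, then $p=ab=0$; combined with $|b|\le|a|=0$ this gives $x=0$, a contradiction. Hence $0<D(x)<1$, the upper bound holding because $x\in\E$ (Theorem \ref{characE}, condition (2)).

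Now I would set $\la_0:=D(x)$, so that $|\la_0|=\tfrac{|a-\bar bp|+|ab-p|}{1-|b|^2}$ and $\la_0\in\D\setminus\{0\}$, which is exactly what is needed to invoke Theorem \ref{schwarzL}. Because $|b|\le|a|$ and $D(x)\le|\la_0|$ (indeed with equality), the first alternative in condition (3) of Theorem \ref{schwarzL} is satisfied. Since conditions (2) and (3) of that theorem are equivalent, condition (2) must hold for this $x$ and $\la_0$, that is,
\[
\max\left\{ \frac{|a-\bar bp|+|ab-p|}{1-|b|^2},\ \frac{|b-\bar ap|+|ab-p|}{1-|a|^2} \right\}\le|\la_0|=\frac{|a-\bar bp|+|ab-p|}{1-|b|^2}.
\]
A maximum that is bounded above by its own first argument forces the second argument to be no larger, which is precisely the assertion of the corollary.

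There is essentially no hard step here: the substance is entirely supplied by Theorem \ref{schwarzL}. The only point requiring care is the admissibility of the choice $\la_0=D(x)$, i.e.\ ensuring $\la_0\neq 0$ so that the theorem applies; this is exactly why the trivial case $x=0$ must be separated out at the start.
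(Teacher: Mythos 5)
Your proof is correct and is essentially the paper's own argument: the paper proves this corollary by invoking the implication (3)$\Rightarrow$(2) of Theorem \ref{schwarzL} with $\la_0 = D(a,b,p)$, exactly as you do. The only difference is that you explicitly handle the degenerate case $D(x)=0$ (i.e.\ $x=0$) needed to ensure $\la_0 \in \D\setminus\{0\}$, a point the paper leaves implicit.
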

\begin{proof}
This follows from the implication (3)$\Rightarrow$(2) of Theorem \ref{schwarzL}
with $\la_0 = D(a,b,p)$.
\end{proof}

The proof of Theorem \ref{schwarzL} not only demonstrates the existence of an interpolating function $\ph:\D \to \E$ when $D(x) \leq |\la_0|$ but also shows us how to construct a suitable $\ph$.
\vspace*{0.2cm}
\begin{center} \bf Algorithm \end{center}

Let $x=(a,b,p) \in \E$, let $\la_0 \in \D \setminus \{0\}$ and suppose that
$|b| \leq |a|$ and
\[
\frac{|a-\bar bp|+|ab-p|}{1-|b|^2} < |\la_0|.
\]
An analytic function $\ph:\D \to \E$ such that $\ph(0)= (0,0,0)$ and $\ph(\la_0)=x$
can be found as follows.
\begin{enumerate}
\item[\rm 1.] If $b=0$ then let $\ph(\la) = \la x/\la_0$.  Otherwise:
\item[\rm 2.]  Choose $w$ such that $w^2 = (ab-p)/\la_0$ and let
\[
Z= \left[ \begin{array}{cc} a/\la_0 & w\\
 w & b \end{array} \right];
\]
then $||Z|| < 1.$
\item[\rm 3.]  Let $M(.)$ be defined by equation (\ref{defM}) and choose
$\al \in \C^2 \setminus \{0\}$ such that $\left< M(|\la_0|\al,\al \right> \leq 0$
(such an $\al$ exists by Lemmas \ref{normZlt1} and \ref{gotdetM}).
\item[\rm 4.] Let vectors $u, v \in \C^2$  and the Blaschke factor $B$ be given by equations (\ref{defuv}) and (\ref{defB}); note that by Lemma \ref{anX}, $u \neq 0$ since
$[Z]_{22} = b \neq 0$.
\item[\rm 5.] Let $F=[F_{ij}]$ be defined by
\begin{equation}\label{defF}
F(\la) = \mathcal{M}_{-Z} \left( \frac{B(\la)uv^*}{||u||^2}\right) \mathrm{diag}(\la,1);
\end{equation}
then $F \in \S_{2 \times 2}$.
\item[\rm 6.]  Let $\ph= (F_{11},F_{22}, \det F)$.  Then $\ph$ is analytic, maps  $\D$ to $\E$ and satisfies $\ph(0)=(0,0,0), \ph(\la_0) =x$.
\end{enumerate}
\begin{Remark}  \rm 
 (i) When $b = 0$ we have
\[
D(\la x/\la_0) = \left|\frac{\la}{\la_0}\right| D(x) \leq |\la|
\]
and so the simple recipe in Step 1 of the algorithm does indeed produce a mapping
$\ph$ such that $\ph(\D) \subset \E$.  In general, though, this recipe is insufficient.
Consider for example the point $x=(\tfrac 12,\tfrac 12,\tfrac 12) \in \E$: here
$D(x)= \tfrac 23$.  Take $\la_0$ slightly greater than $\tfrac 23$.  One can easily
check that $D(\mathrm{i} rx/\la_0) > 1$ for real $r$ close to $1$.  Hence $\la \mapsto \la x/\la_0$ does not map $\D$ into $\bar\E$.\\
(ii)  In the case that $D(x)=|\la_0|$ a modification of our construction will produce an interpolating function $\ph$: one uses the singular value decomposition of $Z$.  The construction is similar to the one in the next section; the details are left to the reader.
\end{Remark}

\section{Non-uniqueness in the Schwarz lemma} \label{uniq}
In contrast to Schwarz' original Lemma, there is no uniqueness statement in the case that the necessary and sufficient condition (2) of Theorem \ref{schwarzL} holds with equality.
Here is a numerical example.

Let $x=(a,b,p)=(\tfrac 12,\tfrac 14,\tfrac 12)$. We have, since $|b| \leq |a|$,
$$
\max\left\{ \frac{|a-\bar b p|+|ab-p|}{1-|b|^2}, \frac{|b- \bar a p|+|ab-p|}{1-|a|^2}  \right\}= \frac{|a-\bar b p|+|ab-p|}{1-|b|^2} = \tfrac 45.
$$
Let $\la_0 = -\tfrac 45$.  We shall construct infinitely many analytic $\ph: \D \to \bar\E$ such that
$\ph(0)=(0,0,0)$ and $ \ph(\la_0)= x$.
 
Let $ w^2= \frac{ab-p}{\la_0} =\frac{15}{32}$   and
$$
Z=\left[\begin{array}{cc}  a/\la_0 & w\\ w& b \end{array}\right].
$$
Then $||Z|| =1$ and since $Z$ is Hermitian we may diagonalise $Z$ as follows:
$$
Z= \left[
\begin{array}{cc} -\tfrac 58 & w\\w & \tfrac 14 \end{array}\right] =
U^*\left[ \begin{array}{cc} -1 & 0\\ 0 & \tfrac 58 \end{array} \right] U,
$$
where $U$ is the unitary matrix
$$
U=  \left[ \begin{array}{cc} 8w & 4w \\ -3 & 5 \end{array}\right] \diag\left(\frac{1}{\sqrt{39}}, \sqrt{\frac{2}{65}}\right).
$$
If $G$ is a Schur function such that $G(\la_0)=Z$ then
$U^*GU$ is a Schur function whose value at $\la_0$ is $\diag(-1, \tfrac 58)$,
from which it is clear that $U^*GU=\diag(-1,g)$ for some scalar function $g$ in the Schur class satisfying $g(\la_0)= \tfrac 58$.  We then have
$$
[G(0)]_{22} = \frac{1}{13}(10g(0)-3).
$$
It follows that the set of functions $G$ in the Schur class such that $G(\la_0)=Z, \quad
[G(0)]_{22} =0$ consists precisely of the functions $U\diag(-1,g)U^*$ where $g$ is a function in the Schur class such that $g(0)=3/10$ and $g(-4/5) = 5/8$.  There are infinitely many such $g$, since the pseudohyperbolic distance $d(\tfrac{3}{10}, \frac{5}{8}) = \tfrac 25 <  \tfrac 45=|\la_0|.$  For any such $g$ we define $\ph:\D \to \bar\E$
by $\ph= (F_{11}, F_{22}, \det F)$ where $F(\la)=U\diag(-1,g)U^*\diag(\la,1)$.  Note that $\ph_3(\la)=  -\la g(\la)$, so that distinct $g$ give rise to different mappings $\ph$, all analytic and satisfying $\ph(0)=(0,0,0), \ph(\la_0)=x$.

\section{All interpolating functions} \label{all}
The algorithm in Section \ref{schwarz} produces a single analytic function $\ph:\D \to \E$ satisfying a pair of interpolation conditions.  Our method of proof in fact gives more: a description of {\em all} such functions.  In an engineering context one could use the freedom in the solution to meet further performance specifications. 
\begin{Theorem}
Let $x=(a,b,p) \in \E$ and $ \la_0 \in \D \setminus \{0\}$ and suppose that $ab \neq p,
|b| \leq |a|$ and
$D(x) < |\la_0|$.   The set $\mathcal{I}$ of analytic functions $\ph:\D \to \E$ such that $\ph(0)=(0,0,0)$ and $\ph(\la_0)=x$ can be described as follows.

Let $w^2 = (ab-p)/\la_0$ and let $\xi_1,\xi_2$ be the roots of the equation $\xi+1/\xi =Y_2$ where $Y_2$ is defined by equation \eqref{dfY2}.  For any $\sigma >0$ let
\[
Z(\sigma) = \left[ \begin{array}{cc} a/\la_0 & \sigma w \\ \sigma^{-1} w & b \end{array} \right]
\]
and let $M(.)$ be defined by equation \eqref{defM} with $Z=Z(\sigma)$.  For any $\sigma$ such that 
\begin{equation} \label{cndsig}
\xi_1 < \sigma^2 < \xi_2
\end{equation}
we have $||Z(\sigma)|| < 1$ and $M(|\la_0|)$  is not positive definite.  Furthermore, for any $\al \in \C^2 \setminus \{0\}$ such that 
\begin{equation} \label{cndal}
\left< M(|\la_0|)\al,\al \right> \leq 0
\end{equation} 
and any $2\times 2$ function $Q$ in the 
Schur class such that 
\begin{equation} \label{cndQ}
Q(0)^*\bar\la_0u(\al)=v(\al),
\end{equation}
 where $u(\al), v(\al)$ are given by equations \eqref{defuv}, the function $ \pi \circ F$  belongs to $\mathcal{I}$ where 
\begin{equation} \label{gotF}
F(\la)= \mathcal{M}_{-Z(\sigma)}\circ (BQ)(\la) \mathrm{diag}(\la,1).
\end{equation}
  Conversely, every function in $\mathcal{I}$ is of the form $\pi \circ F$ for some choice of $\sigma, \al$ and $Q$ satisfying the  conditions \eqref{cndsig}, \eqref{cndal} and \eqref{cndQ} respectively and for $F$ given by equation \eqref{gotF}.
\end{Theorem}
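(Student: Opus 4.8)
The plan is to establish the two inclusions separately: first that every $\pi\circ F$ of the stated form lies in $\mathcal{I}$, and then conversely that every member of $\mathcal{I}$ arises in this way.

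For the forward direction, fix $\sigma$ with $\xi_1<\sigma^2<\xi_2$. I would first note that $Z(\sigma)$ has diagonal $(a/\la_0,b)$ and determinant $p/\la_0$ regardless of $\sigma$, and compute from (\ref{formdet}) that $\det(1-Z(\sigma)^*Z(\sigma))=\frac{|ab-p|}{|\la_0|}\bigl(Y_2-\sigma^2-\sigma^{-2}\bigr)$, which is positive exactly when $\sigma^2$ lies strictly between the roots $\xi_1,\xi_2$ of $\xi+1/\xi=Y_2$. Since $\sigma^2=1$ lies in this range and returns the matrix $Z$ of Lemma \ref{normZlt1} (which has $\|Z\|<1$ because $D(x)<|\la_0|$), a continuity argument shows both diagonal entries of $1-Z(\sigma)^*Z(\sigma)$ remain positive throughout $(\xi_1,\xi_2)$, so $\|Z(\sigma)\|<1$ there. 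A calculation paralleling Lemma \ref{gotdetM} then shows $\det M(|\la_0|)\le0$, i.e.\ $M(|\la_0|)$ is not positive definite. Given such $\sigma$, $\al$ and $Q$, Lemma \ref{22zero}(2) produces $G=\mathcal{M}_{-Z(\sigma)}\circ(BQ)$ with $[G(0)]_{22}=0$ and $G(\la_0)=Z(\sigma)$; setting $F=G\,\diag(\la,1)$ gives a Schur function with $F(0)=\bigl[\begin{smallmatrix}0&*\\0&0\end{smallmatrix}\bigr]$ and $F(\la_0)=\bigl[\begin{smallmatrix}a&\sigma w\\ \sigma^{-1}\la_0 w&b\end{smallmatrix}\bigr]$, whence $\pi(F(0))=(0,0,0)$ and $\pi(F(\la_0))=x$. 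Finally $\pi\circ F$ maps $\D$ into $\bar\E$ by Theorem \ref{closE}(7), and since it meets $\E$ at $\la_0$ it maps into $\E$ by Lemma \ref{ifsomethenall}; hence $\pi\circ F\in\mathcal{I}$.

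For the converse, take $\ph\in\mathcal{I}$. For each fixed $z$ with $|z|\le1$ the map $\la\mapsto\Psi(z,\ph(\la))$ is an analytic self-map of $\D$ vanishing at $0$, so Schwarz' Lemma gives $|\Psi(z,\ph(\la))|\le|\la|$; taking the supremum over $z$ yields $D(\ph(\la))\le|\la|$ for all $\la\in\D$, which says precisely that the divided point $\bigl(\ph_1(\la)/\la,\ph_2(\la),\ph_3(\la)/\la\bigr)$ lies in $\bar\E$ for every $\la$. The crucial step is to lift this pointwise membership to an analytic contraction-valued $G=\bigl[\begin{smallmatrix}\ph_1/\la&g\\ h&\ph_2\end{smallmatrix}\bigr]$, that is, to factor the analytic function $\tilde\eta:=(\ph_1\ph_2-\ph_3)/\la$ as $gh=\tilde\eta$ with $g,h$ analytic and $\|G(\la)\|\le1$ throughout $\D$; the bound $2|\tilde\eta|\le 1-|\ph_1/\la|^2-|\ph_2|^2+|\ph_3/\la|^2$, which is condition (5) of Theorem \ref{closE} for the divided point, is what supports such a factorization. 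By construction $[G(0)]_{22}=\ph_2(0)=0$ and $F:=G\,\diag(\la,1)$ is a Schur function with $\pi\circ F=\ph$. Now set $Z:=G(\la_0)$, which has diagonal $(a/\la_0,b)$, determinant $p/\la_0$, and hence off-diagonal entries $c,d$ with $cd=w^2$. Conjugating $G$ by a diagonal unitary $\diag(e^{i\theta},1)$ (which preserves both $[G(0)]_{22}=0$ and $\pi\circ F$) I can arrange $c=\sigma w$, $d=\sigma^{-1}w$ with $\sigma=|c|/|w|>0$, so that $Z=Z(\sigma)$; since $\|Z\|\le1$ we have $\sigma^2\in[\xi_1,\xi_2]$, and the strict hypothesis $D(x)<|\la_0|$ forces $\sigma^2\in(\xi_1,\xi_2)$ — for if $\|G(\la_0)\|=1$, then applying the maximum principle to $\la\mapsto\langle G(\la)\xi,\eta\rangle$ for the top singular vectors of $G(\la_0)$ produces a constant singular direction $G(\la)\xi\equiv\eta$, hence $\|G(\la)\|\equiv1$, which contradicts $D(\ph(\la))<|\la|$ near $\la_0$. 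Finally, with $[G(0)]_{22}=0$ and $G(\la_0)=Z(\sigma)$ and $\|Z(\sigma)\|<1$, Lemma \ref{22zero}(2) expresses $G$ as $\mathcal{M}_{-Z(\sigma)}\circ(BQ)$ for some $\al\in\C^2\setminus\{0\}$ with $\langle M(|\la_0|)\al,\al\rangle\le0$ and some Schur $Q$ with $Q(0)^*\bar\la_0u(\al)=v(\al)$, so $\ph=\pi\circ F$ with $F$ given by (\ref{gotF}).

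The main obstacle is the lifting step in the converse: producing an analytic factorization $gh=\tilde\eta$ for which the full matrix $G$ stays contractive on all of $\D$. The pointwise-optimal balanced choice $|g|=|h|=|\tilde\eta|^{1/2}$ is generally not analytic, while naive inner--outer factorizations can violate the norm bound in the interior, so some care is needed to exploit the strict membership $\ph(\D)\subset\E$ together with the bound from condition (5). Establishing this analytic lift of the required triangular shape is the technical heart; the subsequent recovery of $\sigma$ (with its strict interiority) and of $\al,Q$ is then bookkeeping built on Lemmas \ref{anX} and \ref{22zero}.
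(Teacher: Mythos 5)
Your forward direction is correct and essentially the paper's: the only variation is that you establish $\|Z(\sigma)\|<1$ on $\xi_1<\sigma^2<\xi_2$ by the determinant formula plus a continuity argument from $\sigma^2=1$, where the paper checks the diagonal-entry inequality $\sigma^2<K$, $K=|\la_0|(1-|b|^2)/|ab-p|$, directly; both work. The converse, however, contains a genuine gap, and it is precisely the one you flag yourself: you never construct the analytic factorization $gh=\tilde\eta$ for which the matrix $G$ is contraction-valued on all of $\D$, and you defer it as ``the technical heart''. That step \emph{is} the proof; everything after it is, as you say, bookkeeping. The paper's resolution dissolves the obstacle you describe: one does not need the factorization to be balanced pointwise in the interior, only almost everywhere on $\T$. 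By F.~Riesz (inner--outer factorization) choose $f,g\in H^\infty$ with $fg=\ph_1\ph_2-\ph_3$ and $|f|=|g|$ a.e.\ on $\T$, normalized so that $g(0)=0$ (possible since $(\ph_1\ph_2-\ph_3)(0)=0$), and set $F=\left[\begin{smallmatrix}\ph_1 & f\\ g & \ph_2\end{smallmatrix}\right]$. The radial boundary values of $\ph$ lie in $\bar\E$ a.e., so by Lemma \ref{AstarA} together with conditions (3), ($3'$) and (5) of Theorem \ref{closE}, the matrix $1-F^*F$ has nonnegative diagonal entries and nonnegative determinant a.e.\ on $\T$; thus $\|F\|\le1$ a.e.\ on $\T$, and the maximum principle propagates contractivity into $\D$, putting $F$ in the Schur class. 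Note also that your assertion that condition (5) alone ``supports such a factorization'' is incomplete: contractivity of $G$ requires the diagonal conditions (3) and ($3'$) as well, and it is exactly the boundary balance $|f|=|g|$ that makes all three conditions apply. The paper then proceeds as you do (divide out $\diag(\la,1)$, normalize by a diagonal unitary to reach $Z(\sigma)$, invoke Lemma \ref{22zero}(2)).

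A second, lesser flaw: your argument for strict interiority of $\sigma^2$ does not hold up. From $\|G(\la_0)\|=1$ you correctly deduce, via the maximum principle, a constant singular direction $G(\la)\xi\equiv\eta$ and hence $\|G(\la)\|\equiv1$; but this does not contradict $D(\ph(\la))<|\la|$. The condition $D(\ph(\la))<|\la|$ says only that the divided point $\pi(G(\la))$ lies in $\E$, i.e.\ that \emph{some} strict contraction has that $\pi$-image; it does not force the particular matrix $G(\la)$ to be a strict contraction, and indeed $\pi$ sends many norm-one matrices into $\E$ (for instance $\pi\left(\left[\begin{smallmatrix}0 & 1\\ 0 & 0\end{smallmatrix}\right]\right)=(0,0,0)$). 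The paper instead derives strictness upstream: it argues $|f(0)|<1$, hence $\|F(0)\|<1$, and a norm-one value of a Schur-class function at any point would force (by the same constant-singular-direction argument) $\|F(0)\|=1$; strictness then passes to $G(\la_0)=Z(\sigma)$, which yields $\xi_1<\sigma^2<\xi_2$ via the determinant formula. So both your lifting step and your strictness step need to be replaced before the converse direction stands.
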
  
\begin{proof}
A slight modification of the proof of Lemma \ref{normZlt1} shows that $||Z(\sigma)|| < 1$ if and only if 
\[
1 - |b|^2 -\sigma^2\left|\frac{ab-p}{\la_0}\right| >0 \mbox{ and }
1 - \left|\frac{a}{\la_0}\right|^2 -|b|^2+\left|\frac{p}{\la_0}\right|^2
 - \left|\frac{ab-p}{\la_0}\right| (\sigma^2 +\frac{1}{\sigma^2}) >0,
\]
that is, if and only if
\[
\sigma^2 < K \mbox{  and  } \sigma^2 + \frac{1}{\sigma^2} < Y_2
\]
where 
\begin{equation} \label{defK}
K \stackrel{\rm def}{=} \frac{|\la_0|(1-|b|^2)}{|ab-p|}.
\end{equation}
By the inequalities (\ref{Agt1}) and (\ref{dfY2}),
\begin{equation} \label{KY1}
K > 1 \mbox{ and } Y_2 > 2.
\end{equation}
Moreover
\begin{equation} \label{KY2}
K + \frac{1}{K} - Y_2 = \frac {|a-\bar b p|^2}{|\la_0|(1-|b|^2)|ab-p|} > 0.
\end{equation}
Figure 1 is a plot of $\xi+1/\xi$ against $\xi$ that incorporates the relations (\ref{KY1}) and (\ref{KY2}).  It is clear from the plot that $\xi < K$ and $\xi+1/\xi < Y_2$ precisely when $\xi_1 < \xi < \xi_2$, or 
\begin{center} \includegraphics {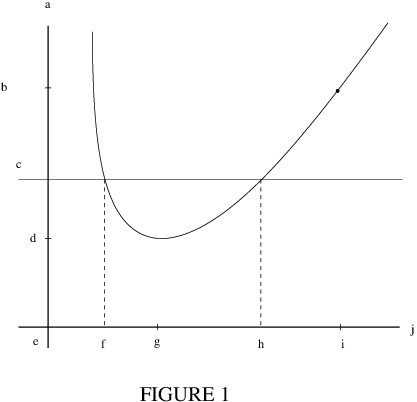} \end{center}
equivalently, when $\xi+1/\xi < Y_2$, the inequality 
$\xi < K$ then being automatically satisfied.  
Figure 1
It follows that $||Z(\sigma)|| < 1$ if and only if $\xi_1 < \sigma^2 < \xi_2$.

We claim that, for the same range of values of $\sigma$, $\det M(|\la_0|) < 0$.
Indeed, a straightforward calculation gives
\[
\det(M(|\la_0|)\det(1-Z(\sigma)^*Z(\sigma))) = -(y-y_1)(y-y_2)
\]
where
\begin{eqnarray*}
 y &=& |ab-p| (\sigma^2 + \frac{1}{\sigma^2}), \\
 y_j &=& |ab-p| Y_j, \quad j=1,2.\\
\end{eqnarray*}
Since
\[
Y_1-Y_2 = \frac{1-|\la_0|^2}{|(ab-p)\la_0|} (|a|^2 - |b|^2) \geq 0,
\]
it is clear that $\det M(|\la_0|) < 0$ when $y<y_2$, or equivalently, when $\sigma^2 + \frac{1}{\sigma^2}  < Y_2$, which is to say, when $\xi_1 < \sigma^2 < \xi_2$.
Thus $||Z(\sigma)|| < 1$ and $M(|\la_0|)$ is not positive definite when condition (\ref{cndsig}) is satisfied.

Suppose that $\sigma, \al$ and $Q$ satisfy conditions (\ref{cndsig}), (\ref{cndal}) and (\ref{cndQ}).  By Lemma \ref{22zero} the function $G =\mathcal{M}_{Z(\sigma)} \circ (BQ)$ belongs to $\S_{2\times 2}$ and satisfies $[G(0)]_{22} =0$ and $G(\la_0) = Z(\sigma)$.  Hence $F$ given by equation (\ref{gotF}) satisfies
\[
 F \in \S_{2\times 2}, \quad F(0) = \left[ \begin{array}{cc} 0 & *\\ 0 & 0 \end{array} \right] \mbox{ and } F(\la_0) = \left[ \begin{array}{cc} a & \sigma w \\ \la_0 \sigma^{-1} w & b \end{array} \right].
\]
Thus the function $\ph = \pi \circ F$ is analytic from $\D$ to $\E$ and satisfies $\ph(0)=(0,0,0)$ and $\ph(\la_0) = (a,b,p) = x.$ Thus $\ph \in \mathcal{I}$.

Conversely, suppose that $\ph \in \mathcal{I}$.  The radial limit function of $\ph$, which we shall again denote by $\ph$, maps $\T$ almost everywhere to $\bar\E$.   By a theorem of F. Riesz, or directly from inner-outer factorization, there exist $f,g \in H^\infty$ such that $fg = \ph_1\ph_2-\ph_3$ and $|f| = |g|$  a.e. on $\T$.  Since $fg(0)= 0$ we can assume that $g(0)=0$.  Let 
\begin{equation} \label{defineF}
F=\left[ \begin{array}{cc} \ph_1 & f \\ g & \ph_2 \end{array} \right].
\end{equation}
We have $\pi \circ F = \ph$.
By Lemma \ref{AstarA}, $1-F^*F$ has diagonal entries $1 - |\ph_1|^2 - |\ph_1\ph_2 - \ph_3|$ and $1-|\ph_2|^2 - |\ph_1\ph_2-\ph_3|$ and determinant $1- |\ph_1|^2 -|\ph_2|^2+|\ph_3|^2 -2|\ph_1\ph_2 - \ph_3|$  a.e. on $\T$,  and since these three functions are non-negative by Theorem \ref{closE}, it follows that $F$ is in the Schur class.  From the facts that $ab \neq p$, $\pi\circ F(\la_0)=(a,b,p)$  and
\[
F(0) = \left[ \begin{array}{cc} 0& f(0) \\ 0 & 0 \end{array} \right]
\]
one sees that $F$ is non-constant and hence $|f(0)| < 1$.  Thus $||F(0)|| < 1$ and so
 $F \in \S_{2\times 2}$.  We shall show that $F$ can be written in the form (\ref{gotF}) for some choice of $\sigma, \al$ and $Q$.

Note that $(fg)(\la_0) = ab-p \neq 0$, so that $f(\la_0), g(\la_0)$ are nonzero.
Let $\sigma = f(\la_0)/w$; then $g(\la_0)= \la_0 \sigma^{-1} w$.  We can suppose that $\sigma > 0$ (if necessary replace $F$ by $U^*FU$ for some constant diagonal unitary $U$).  Thus
\[
F(\la_0) = \left[ \begin{array}{cc} a & \sigma w \\ \la_0 \sigma^{-1} w & b \end{array} \right].
\]
Since  the first column of $F(0)$ is zero we may write $F(\la)= G(\la) \mathrm{diag}(\la,1)$ for some $G \in \S_{2\times 2}$.  We have
\[
G(0) = \left[ \begin{array}{cc} * & * \\ * & 0\end{array}\right], \quad
 G(\la_0) = \left[ \begin{array}{cc} a/\la_0 & \sigma w \\ \sigma^{-1} w & b \end{array} \right] = Z(\sigma).
\]
Since $||G(\la_0)|| < 1$ it follows that  condition (\ref{cndsig}) holds and thence that
$M(|\la_0|)$ is not positive definite.  By Lemma \ref{22zero}(2) there exist $\al, Q$ such that conditions (\ref{cndal}), (\ref{cndQ}) hold and $F$ is given by equation (\ref{gotF}).
\end{proof}
\begin{Remark} \rm
 The $2\times 2$ functions $Q$ in the Schur class that satisfy condition (\ref{cndQ}) can easily be parametrised by standard Nevanlinna-Pick theory (see for example \cite[Theorem 18.5.2 and Example 18.5.2]{BGR}).
\end{Remark}

\section{Automorphisms of the tetrablock} \label{autos}
In this section we shall use ``composition" on $\E$ to describe a large group of automorphisms of $\E$.  

Let $x, y \in \E$.  A simple calculation shows that
\[
\Psi(.,x) \circ \Psi(.,y) = \Psi(., x \diamond y)
\]
where
\begin{eqnarray} \label{defdia}
x\dia y &=& \frac{1}{1-x_2y_1} ( x_1 - x_3y_1, y_2 - x_2y_3, x_1y_2 - x_3y_3)\nonumber \\
  &=& \left( \Psi(y_1,x), \Upsilon(x_2,y), \frac{x_1y_2 - x_3y_3}{1-x_2y_1}\right).
\end{eqnarray}
Note that $1-x_2y_1 \neq 0$ since $|x_2| < 1, |y_1| < 1$, and hence $x\dia y$ is defined.
We shall define $x\dia y$ by equation (\ref{defdia}) for any $x, y \in \C^3$ such that $x_2 y_1 \neq 1$.   For $x,y \in \bar\E$, $x\dia y$ can fail to be defined, but if it {\em is} defined then $\Psi(.,x\dia y)$ is a self map of $\Delta$ and so $x\dia y \in \bar\E$ (in the triangular case we do have $|\Psi(y_1,x)| \leq 1, |\Upsilon(x_2,y)| \leq 1$ by virtue of Theorem \ref{closE}, conditions (2) and ($2'$)).

We can think of $\dia$ as a disguised form of matrix multiplication.  
For $x \in \bar\E$ let 
\[
M_x\stackrel{\rm def}{=}\left[\begin{array}{cc} x_3 & -x_1\\x_2 & -1 \end{array}\right].
\]
In the customary association of M\"obius transformations with $2\times 2$ matrices $\Psi(.,x)$ corresponds to the non-zero multiples of $M_x$  and so $\Psi(.,x\dia y)$ corresponds to
\[
M_{x\dia y}=
\la \left[\begin{array}{cc} x_3 & -x_1\\x_2 & -1 \end{array}\right] 
\left[\begin{array}{cc} y_3 & -y_1\\y_2 & -1 \end{array}\right] = \la
\left[\begin{array}{cc} x_3y_3-x_1y_2 & -x_3y_1+x_1\\x_2y_3-y_2 & -x_2y_1+1 \end{array}\right]
\]
where $\la$ is chosen to make the $(1,1)$ entry of the product equal to $-1$.

It follows from the associativity of matrix multiplication that $\dia$ is associative: for $u,v, w\in \bar\E$,  $(u\dia v)\dia w = u\dia(v\dia w)$ provided both sides are defined, for both sides have representing matrices proportional to $M_u M_v M_w$. 

We define left and right actions of $\Aut\D$ on both $\E$ and $\bar\E$.  
Let us write
\begin{equation} \label{defEsharp}
\E^\sharp \stackrel{\rm def}{=}\{x\in \bar\E: |x_1|<1,|x_2| <1 \},
\end{equation}
so that $\E\subset \E^\sharp \subset \bar\E$.
It is clear from equations (\ref{defdia}) that
$x\dia y$ is defined if $x, y \in \bar\E$ and one of $x,y$ lies in $\E^\sharp$, and moreover that $\E^\sharp$ is closed under the operation $\dia$.
Thus $(\E^\sharp,\dia)$ is a semigroup, with identity $(0,0,-1)$.  It contains $\Aut\D$ in a sense we now explain.\\
Consider any $\up \in \Aut\D$.  We can write
\begin{equation} \label{formup}
\up(z)=  \omega \frac{ z - \alpha}{\bar\alpha z - 1}= \Psi(z, \omega\al,\bar\al,\omega)
  \end{equation}
  for some  $ \alpha \in \D $ and $  \omega \in \T$.   Let
\begin{equation} \label{deftau}
\tau(\up) = (\omega\al,\bar\al,\omega),
\end{equation}
so that $\up=\Psi(.,\tau(\up))$.  Clearly $\tau(\up)$ is non-triangular, and since $||\up||_\infty = 1$ it follows from Theorem \ref{closE} that $\tau(\up) \in \bar\E$.    The first two components of $\tau(\up)$ have modulus less than one, so that $\tau(\up) \in \E^\sharp$.   
\begin{Lemma} \label{tauhom}
$\tau:\Aut \D \to \E^\sharp$ is a unital monomorphism of semigroups.
\end{Lemma}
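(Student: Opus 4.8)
The plan is to verify, one at a time, the three properties bundled into the phrase ``unital monomorphism of semigroups'': injectivity of $\tau$, the homomorphism identity $\tau(\up\circ\psi)=\tau(\up)\dia\tau(\psi)$, and the fact that $\tau$ sends the identity automorphism to the $\dia$-identity $(0,0,-1)$. That $\tau$ actually lands in $\E^\sharp$ has already been checked in the paragraph preceding the lemma, so only these three points remain.

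Injectivity is immediate and costs nothing, because the definition of $\tau$ builds in a left inverse: by construction $\up=\Psi(\cdot,\tau(\up))$ for every $\up\in\Aut\D$. Hence $\tau(\up)=\tau(\psi)$ forces $\up=\Psi(\cdot,\tau(\up))=\Psi(\cdot,\tau(\psi))=\psi$.

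For the homomorphism property I would exploit the defining relation of $\dia$, namely $\Psi(\cdot,x)\circ\Psi(\cdot,y)=\Psi(\cdot,x\dia y)$. Writing $\up=\Psi(\cdot,\tau(\up))$ and $\psi=\Psi(\cdot,\tau(\psi))$, this gives
\[
\up\circ\psi=\Psi(\cdot,\tau(\up))\circ\Psi(\cdot,\tau(\psi))=\Psi(\cdot,\tau(\up)\dia\tau(\psi)),
\]
while on the other hand $\up\circ\psi\in\Aut\D$ is itself represented as $\Psi(\cdot,\tau(\up\circ\psi))$. Thus the two points $\tau(\up\circ\psi)$ and $\tau(\up)\dia\tau(\psi)$ induce the same, non-constant, M\"obius map through $\Psi$. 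To turn this into an equality of points I invoke the bijectivity of the correspondence $x\mapsto\Psi(\cdot,x)$ on non-triangular points noted after Theorem~\ref{characE}: a non-constant $\Psi(\cdot,x)$ forces $x$ to be non-triangular, so both points in question are non-triangular, and equality of the associated M\"obius maps then forces equality of the points. Here $\tau(\up)\dia\tau(\psi)$ is defined and lies in $\E^\sharp$ because $\E^\sharp$ is closed under $\dia$.

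Finally, the unital condition is a one-line substitution. From \eqref{dPsi}, $\Psi(z,(0,0,-1))=\frac{-z}{-1}=z$, so $(0,0,-1)$ represents the identity automorphism; comparing with the normal form \eqref{formup} this is the case $\al=0,\ \omega=-1$, whence $\tau(\mathrm{id})=(0,0,-1)$, the $\dia$-identity already identified. The argument is short throughout, and the only step needing genuine care --- the one I would flag as the main (if mild) obstacle --- is the passage from ``same $\Psi$'' to ``same point'' in the homomorphism step, where one must first confirm non-triangularity before appealing to the injectivity of $x\mapsto\Psi(\cdot,x)$.
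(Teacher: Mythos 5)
Your proof is correct and takes essentially the same route as the paper: the same chain $\Psi(\cdot,\tau(\up\circ\chi))=\up\circ\chi=\Psi(\cdot,\tau(\up))\circ\Psi(\cdot,\tau(\chi))=\Psi(\cdot,\tau(\up)\dia\tau(\chi))$ for the homomorphism property, the same identification $\tau(\iota)=(0,0,-1)$ for unitality, and injectivity from $\up=\Psi(\cdot,\tau(\up))$. The one difference is that you spell out the step the paper compresses into ``Hence'' --- that equality of the associated (non-constant, hence the points are non-triangular) M\"obius maps forces equality of the points, via the injectivity of $x\mapsto\Psi(\cdot,x)$ on non-triangular points --- which is a useful clarification rather than a different argument.
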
 
\begin{proof}
For $\up, \chi \in \Aut\D$ we have
\[
\Psi(.,\tau(\up\circ\chi))=\up\circ\chi=\Psi(.,\tau(\up))\circ\Psi(.,\tau(\chi))=\Psi(.,\tau(\up)\dia\tau(\chi)).
\]
Hence $\tau(\up\circ\chi)=\tau(\up)\dia\tau(\chi)$.  If $\iota$ is the identity automorphism on $\D$, then by equation \ref{deftau} we have $\tau(\iota)= (0,0,-1)$, which is the identity element of $\E^\sharp$.  It is clear that $\tau$ is injective, and so $\tau$ is a unital monomorphism.
\end{proof}
Henceforth we write $\up\cdot x$ for $\tau(\up)\dia x$ and $x\cdot \up$ for $x\dia \tau(\up)$. 
\begin{Lemma} \label{LactionE}
 For any automorphism $\up$ of $\D$ and any $x \in \bar\E$ we have
$\up\cdot x \in \bar\E$ and $ x\cdot\up \in \bar\E$.
 Moreover, if $x$ is in $\E$ then so are $\up\cdot x$ and $x\cdot \up$. 
 \end{Lemma}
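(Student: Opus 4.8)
The first two membership statements follow almost at once from the properties of $\dia$ recorded just before the lemma. Since $\tau(\up)\in\E^\sharp$ for every $\up\in\Aut\D$ (Lemma \ref{tauhom}) and $\E^\sharp\subset\bar\E$, each of the products $\up\cdot x=\tau(\up)\dia x$ and $x\cdot\up=x\dia\tau(\up)$ has one factor in $\E^\sharp$, hence is defined for every $x\in\bar\E$; being a defined $\dia$-product of two elements of $\bar\E$, each lies in $\bar\E$. So the only real content is the refinement $x\in\E\Rightarrow\up\cdot x,\,x\cdot\up\in\E$.

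My plan for the refinement is to test membership of the images in $\E$ through condition (2) of Theorem \ref{characE}, exploiting the composition identity $\Psi(\cdot,u)\circ\Psi(\cdot,v)=\Psi(\cdot,u\dia v)$. Since $\Psi(\cdot,\tau(\up))=\up$, taking $u=\tau(\up)$, resp. $v=\tau(\up)$, gives
\[
\Psi(\cdot,\up\cdot x)=\up\circ\Psi(\cdot,x),\qquad \Psi(\cdot,x\cdot\up)=\Psi(\cdot,x)\circ\up .
\]
If $x\in\E$ then, by Theorem \ref{characE} together with (\ref{formD}), the image $\Psi(\Delta,x)$ is a compact subset of $\D$ (a closed sub-disc of maximal modulus $D(x)<1$, or, in the triangular case, the single point $x_1\in\D$). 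Now $\up$ restricts to a homeomorphism of $\Delta$ onto itself that maps $\D$ into $\D$. Post-composition therefore keeps the image a compact subset of $\D$, so $\|\Psi(\cdot,\up\cdot x)\|_{H^\infty}<1$; and $\up(\D)=\D$ gives $\|\Psi(\cdot,x\cdot\up)\|_{H^\infty}=D(x)<1$. Thus in both cases the $H^\infty$-norm requirement of condition (2) is satisfied.

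The main obstacle is the triangular caveat attached to condition (2): when the image point happens to be triangular, one must separately verify that its second coordinate has modulus $<1$. Because $\up$ is a bijection of $\D$, pre- or post-composition preserves constancy of $\Psi(\cdot,x)$, so $\up\cdot x$ (respectively $x\cdot\up$) is triangular precisely when $x$ is. In that case I would read off the second coordinate directly from (\ref{defdia}), which gives $(\up\cdot x)_2=\Upsilon(\bar\al,x)$ and $(x\cdot\up)_2=\Upsilon(x_2,\tau(\up))$, where $\tau(\up)=(\omega\al,\bar\al,\omega)$. For triangular $x=(x_1,x_2,x_1x_2)$ the first of these collapses to $\Upsilon(\bar\al,x)=x_2$, while the second is the image of $x_2$ under a Möbius self-map of $\D$; since $x\in\E$ forces $|x_2|<1$, both coordinates have modulus $<1$. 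Condition (2) of Theorem \ref{characE} then yields $\up\cdot x,\,x\cdot\up\in\E$. As an alternative to the second computation, one can deduce the $x\cdot\up$ statement from the $\up\cdot x$ statement via the involution $(x_1,x_2,x_3)\mapsto(x_2,x_1,x_3)$, which is a symmetry of $\E$ and $\bar\E$ and which reverses $\dia$.
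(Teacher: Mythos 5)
Your proof is correct and follows essentially the same route as the paper's: the $\bar\E$ statements come from the facts about $\dia$ and $\E^\sharp$ recorded before the lemma, and the $\E$ refinement uses the identity $\Psi(\cdot,\up\cdot x)=\up\circ\Psi(\cdot,x)$, the preservation of triangularity under composition with $\up$, and the computation $(\up\cdot x)_2=\Upsilon(\bar\al,x)=x_2$ in the triangular case, exactly as in the paper. The only difference is cosmetic: you spell out the right-action case both directly (via $\Upsilon(x_2,\tau(\up))$) and via the flip symmetry, where the paper dismisses it with ``likewise''.
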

\begin{proof}
Since $x\in\bar\E$ and $\tau(\up)\in\E^\sharp$ it follows that $\tau(\up)\dia x$ exists and belongs to $\bar\E$.  Likewise $x\dia\tau(\up) \in \bar\E$.  If further $x \in \E$ then $\Psi(.,x)$ maps $\Delta$ into $\D$, and since
\begin{equation} \label{compup}
\Psi(.,\up\cdot x) = \Psi(.,\tau(\up)\dia x) = \up \circ \Psi(.,x),
\end{equation}
it follows that $\Psi(.,\up\cdot x)$ also maps $\Delta$ into $\D$.
Now $\up\cdot x$ is triangular if and only if $\up \circ \Psi(.,x)$ is constant, which is so if and only if $x$ is triangular.  Hence, by Theorem \ref{characE}, if $x$ is non-triangular, $\up\cdot x$  lies in $\E$.  In the case of triangular $x$ the same conclusions hold:  here $\up\cdot x$ is  triangular, and in view of Theorem \ref{closE}, Condition (2), we need also to check that the second component of $\up\cdot x$ lies in $\D$.  By equations (\ref{defdia}), this component is $\Upsilon(\bar\al, x)$, which equals $x_2$ and does lie in $\D$.  Likewise if $x \in\E$ then  $x\cdot \up$ lies in $\E$.
\end{proof}
 Accordingly there are maps
\[
m_1:\bar\E \times \Aut\D \to \bar\E: (x,\up) \mapsto x\cdot \up, \quad
   m_2:(\Aut\D) \times \bar\E \to \bar\E:(\up, x) \mapsto \up\cdot x,
\]
 which restrict to maps $ \E \times \Aut\D \to \E$ and
    $\Aut\D\times \E \to \E$. 
  \begin{Theorem} \label{actionE}
 The maps $m_1$ and $m_2$ define right and left group actions of $\Aut\D$ on
  $\bar\E$ (and by restriction on $\E$) which commute with each other. Moreover the actions on $\E$ and $\bar\E$ are by maps that are holomorphic in a neighbourhood of $\bar\E$. 
 \end{Theorem}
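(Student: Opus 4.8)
The plan is to reduce every assertion to two facts already in hand: the associativity of $\dia$ and the semigroup homomorphism property $\tau(\up\circ\chi)=\tau(\up)\dia\tau(\chi)$ of Lemma \ref{tauhom}. First I would check the action axioms. For the left action $m_2$ the identity axiom follows because $\tau(\iota)=(0,0,-1)$ and a direct substitution in equation (\ref{defdia}) gives $(0,0,-1)\dia x = x$ for every $x$ (the denominator there is $1$), while the composition axiom is the one-line computation
\[
\chi\cdot(\up\cdot x)=\tau(\chi)\dia(\tau(\up)\dia x)=(\tau(\chi)\dia\tau(\up))\dia x=\tau(\chi\circ\up)\dia x=(\chi\circ\up)\cdot x .
\]
Symmetrically, for the right action $m_1$ one has $x\cdot\iota=x$ and, by the same two ingredients, $(x\cdot\up)\cdot\chi = x\cdot(\up\circ\chi)$. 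That $m_1,m_2$ actually land in $\bar\E$ and restrict to self-maps of $\E$ is exactly the content of Lemma \ref{LactionE}, so well-definedness requires no separate argument.

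The commuting of the two actions is again immediate from associativity of $\dia$:
\[
\up\cdot(x\cdot\chi)=\tau(\up)\dia(x\dia\tau(\chi))=(\tau(\up)\dia x)\dia\tau(\chi)=(\up\cdot x)\cdot\chi .
\]
Thus all the algebraic content of the theorem is formal, inherited from the semigroup structure of $(\E^\sharp,\dia)$.

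The only genuinely analytic point is holomorphicity, and I would argue it straight from the rational formula (\ref{defdia}). Writing $\tau(\up)=(\omega\al,\bar\al,\omega)$ with $\al\in\D$, $\omega\in\T$, the map $x\mapsto\up\cdot x=\tau(\up)\dia x$ is
\[
x\mapsto\frac{1}{1-\bar\al x_1}\bigl(\,\omega\al-\omega x_1,\; x_2-\bar\al x_3,\; \omega\al x_2-\omega x_3\,\bigr),
\]
whose only possible singularity is at $x_1=1/\bar\al$. Since $|1/\bar\al|=1/|\al|>1$ while every $x\in\bar\E$ satisfies $|x_1|\le 1$ by Theorem \ref{closE}, the denominator is bounded away from $0$ on $\bar\E$, hence nonvanishing on a neighbourhood of $\bar\E$, so $x\mapsto\up\cdot x$ is holomorphic there. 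The identical computation for $x\cdot\up=x\dia\tau(\up)$ yields denominator $1-\omega\al x_2$, nonvanishing on $\bar\E$ because $|x_2|\le 1$.

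The main obstacle, such as it is, lies entirely in this last step: confirming that the two denominators stay off zero on a full neighbourhood of $\bar\E$, for which the essential inputs are the coordinate bounds $|x_1|,|x_2|\le 1$ on $\bar\E$ together with $\al\in\D$. Everything else is a formal consequence of associativity and Lemma \ref{tauhom}. I would close by remarking that, being a group action, each $x\mapsto\up\cdot x$ has holomorphic inverse $x\mapsto\up^{-1}\cdot x$, so these maps are in fact biholomorphisms of $\E$ (and of $\bar\E$), which is the point that motivates the section.
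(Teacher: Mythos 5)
Your proposal is correct and follows essentially the same route as the paper: identity and composition axioms from the explicit formula for $\dia$ together with Lemma \ref{tauhom} and associativity, commutativity of the two actions from associativity again, and holomorphy from the rational formula with denominator $1-\bar\al x_1$ (respectively $1-\omega\al x_2$) nonvanishing on the neighbourhood $\{|x_1|<1/|\al|\}$ of $\bar\E$. The only (harmless) additions are your explicit treatment of the right-action denominator, which the paper leaves to symmetry, and the closing remark on biholomorphy.
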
 
 \begin{proof}
 It follows from equations (\ref{defdia}) that $\iota\cdot x =(0,0,-1)\dia x =x$, and similarly $x\cdot\iota = x$ for any $x\in\bar\E$.   From the homomorphic property of $\tau$ and the associativity of $\dia$ we have
\[
\up\cdot(\chi\cdot x) =\tau(\up)\dia(\tau(\chi)\dia x)=(\tau(\up)\dia\tau(\chi))\dia x =\tau(\up\circ \chi)\dia x = (\up\circ\chi)\cdot x.
\]
Thus $m_2$ is a left action of $\Aut\D$ on $\E$ and $\bar\E$.  Similarly $m_1$ is a right action.

We must show that the left and right actions commute, that is, that $\up\cdot(x\cdot\chi) =(\up\cdot x)\cdot\chi$ for $\up,\chi, x$ as above.  This also follows from the associativity of the operation $\dia$.

Finally, the actions of $\Aut\D$ on $\E$ are given by rational functions: if $\up$ is given by equation (\ref{formup}) then
\begin{eqnarray*}
\up\cdot x &=& \tau(\up)\dia x= (\omega\al,\bar\al,\omega) \dia x \\
	&=& \frac{1}{1-\bar\al x_1}(\omega(\al-x_1), x_2-\bar\al x_3, \omega(\al x_2-x_3)).
\end{eqnarray*}
For fixed $\up \in \Aut\D$ this is clearly an analytic function of $x$ in the set $\{x\in \C^3: |x_1| < 1/|\al|\}$, which is a neighbourhood of $\bar\E$.
\end{proof}

It follows from Theorem \ref{actionE} that, for $\up,\chi \in \Aut\D$, there are commuting elements $L_\up, R_\chi \in \Aut\E$ given by $L_\up=m_2(\up,.),R_\chi=m_1(.,\up)$.
Another automorphism of $\E$ is the ``flip'' $F$:
\[
F(x_1,x_2,x_3)= (x_2,x_1,x_3), \quad x \in \E^\sharp.
\]
One can verify from equations (\ref{defdia}) that
\[
F(x\dia y) = F(y) \dia F(x), \quad x,y \in \E^\sharp.
\]
Moreover
\[
F(\tau(\up))=F(\omega\al,\bar\al,\omega)=(\al,\omega\al,\omega)=\tau(\up_*)
\]
where $\up_* \in \Aut\D$,
\[
\up_*(z) = \omega \frac{z-\bar\omega\bar\al}{\omega\al z -1}.
\]
\begin{Theorem} \label{agroup}
The set 
\[
G=\{ L_\up R_\chi F^\nu: \up,\chi \in \Aut\D, \nu = 0 \mbox{ or } 1\}
\]
constitutes a group of automorphisms of $\E$.
\end{Theorem}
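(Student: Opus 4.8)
The plan is to exhibit $G$ as a subgroup of $\Aut\E$. Since $L_\up$ and $R_\chi$ lie in $\Aut\E$ (by the remark following Theorem \ref{actionE}) and the flip $F$ is a biholomorphic involution of $\E$ — it is linear, self-inverse, and carries $\E$ onto $\E$ by the symmetry of $\E$ in its first two variables — every element $L_\up R_\chi F^\nu$ is a composition of automorphisms and hence belongs to $\Aut\E$. It therefore suffices to check that $G$ contains the identity and is closed under composition and inversion. The identity is $L_\iota R_\iota F^0$. For the closure properties I would first assemble the basic commutation rules among the generators: from the action laws, $L_\up L_{\up'}=L_{\up\circ\up'}$ and $R_\chi R_{\chi'}=R_{\chi'\circ\chi}$, whence $L_\up^{-1}=L_{\up^{-1}}$ and $R_\chi^{-1}=R_{\chi^{-1}}$; the two families commute, $L_\up R_\chi=R_\chi L_\up$, by Theorem \ref{actionE}; and $F^2=\mathrm{id}$.

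The essential new ingredient is the way $F$ conjugates the two actions. Using $F(x\dia y)=F(y)\dia F(x)$ together with $F(\tau(\up))=\tau(\up_*)$, and noting that $\tau(\up)\in\E^\sharp$ so that every product below is defined, I compute for $x\in\E$
\[
F(L_\up(x)) = F(\tau(\up)\dia x) = F(x)\dia F(\tau(\up)) = F(x)\dia\tau(\up_*) = R_{\up_*}(F(x)),
\]
so that $F L_\up = R_{\up_*} F$; symmetrically $F R_\chi = L_{\chi_*} F$. Thus conjugation by $F$ interchanges the left and right actions while twisting the parameter by $\up\mapsto\up_*$.

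With these rules in hand, closure becomes a bookkeeping exercise of pushing every $F$ to the right. For $\nu=0$,
\[
(L_\up R_\chi)(L_{\up'}R_{\chi'}F^{\nu'}) = L_{\up\circ\up'}\, R_{\chi'\circ\chi}\, F^{\nu'} \in G.
\]
For $\nu=1$ I first move the interior $F$ past $L_{\up'}R_{\chi'}$, using the conjugation relations and commutativity to get $F L_{\up'} R_{\chi'} = L_{\chi'_*} R_{\up'_*} F$, and then collect:
\[
(L_\up R_\chi F)(L_{\up'}R_{\chi'}F^{\nu'}) = L_{\up\circ\chi'_*}\, R_{\up'_*\circ\chi}\, F^{1+\nu'} \in G,
\]
reading $F^{1+\nu'}$ modulo $F^2=\mathrm{id}$. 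Inversion is handled identically: $(L_\up R_\chi)^{-1}=L_{\up^{-1}}R_{\chi^{-1}}\in G$, whereas $(L_\up R_\chi F)^{-1} = F R_{\chi^{-1}} L_{\up^{-1}} = L_{(\chi^{-1})_*} R_{(\up^{-1})_*} F \in G$ after the same pushing of $F$ to the right.

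I expect no serious obstacle here: once the conjugation identities $F L_\up = R_{\up_*} F$ and $F R_\chi = L_{\chi_*} F$ are established, the result reduces to the elementary group-theoretic manipulation above. The only points demanding care are the reversed composition order in the right action ($R_\chi R_{\chi'} = R_{\chi'\circ\chi}$) and the consistent tracking of the twist $\up\mapsto\up_*$; that $F$ is genuinely an automorphism of $\E$, and that all the $\dia$-products that arise are defined, is immediate from the symmetry of $\E$ and from $\tau(\Aut\D)\subset\E^\sharp$.
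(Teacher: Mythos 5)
Your proposal is correct and follows essentially the same route as the paper: both rest on the conjugation relations $FL_\up = R_{\up_*}F$ and $FR_\chi = L_{\chi_*}F$, derived from $F(x\dia y)=F(y)\dia F(x)$ and $F(\tau(\up))=\tau(\up_*)$, after which closure under composition and inversion is routine. You merely spell out the bookkeeping (including the reversed order $R_\chi R_{\chi'}=R_{\chi'\circ\chi}$) that the paper compresses into ``it follows easily,'' and your computations are accurate.
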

\begin{proof}
It is clear that $G \subset \Aut\E$.  We need relations between the generators.
For $x\in\E, \up\in\Aut\D$,
\begin{eqnarray*}
FL_\up(x) &=& F(\up\cdot x) =F(\tau(\up)\dia x) = F(x)\dia F(\tau(\up)) = F(x)\dia\tau(\up_*) = F(x)\cdot \up_*\\
 &=& R_{\up_*}F(x).
\end{eqnarray*}
Similarly $FR_\up =L_{\up_*}F$.   It follows easily that $G$ is closed under the group operation and inversion,
hence is a subgroup of $\Aut\E$.
\end{proof}
We propose the following natural
\begin{Conjecture}\label{allautos}
$G = \Aut\E$:  every automorphism of $\E$ is of the form $L_\up R_\chi F^\nu$ for some $ \up,\chi \in \Aut\D$ and $ \nu = 0 \mbox{ or } 1.$
 \end{Conjecture}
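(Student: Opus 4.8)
The inclusion $G\subseteq\Aut\E$ is Theorem \ref{agroup}, so the content of the conjecture is the reverse inclusion $\Aut\E\subseteq G$. The plan is the standard one for computing an automorphism group: exhibit an $\Aut\E$-invariant reference set on which $G$ already acts transitively, use it to normalise an arbitrary $\Phi\in\Aut\E$ so as to fix a base point, and then identify the isotropy group. A short computation from (\ref{defdia}) shows that the orbit of the origin under the two actions is exactly the set of \emph{triangular} points: one has $\tau(\up)\dia 0=(\omega\al,0,0)$, and a subsequent right translation sends this to $(\zeta,\eta,\zeta\eta)$, so the orbit is $\{(\zeta,\eta,\zeta\eta):\zeta,\eta\in\D\}=\E_{\mathrm{tri}}$, where $\E_{\mathrm{tri}}=\{x\in\E:x_1x_2=x_3\}$. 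Thus $G$ acts transitively on $\E_{\mathrm{tri}}$; since $0\in\E_{\mathrm{tri}}$, once one knows that $\Aut\E$ preserves $\E_{\mathrm{tri}}$ one may compose $\Phi$ with a suitable element of $G$ and assume $\Phi(0)=0$.

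The crux is therefore to show that every automorphism preserves $\E_{\mathrm{tri}}$, and I would do this through the complex geodesics of $\E$, which any biholomorphism must permute. Each point of $\E$ lies on the geodesic disc $\la\mapsto(\beta_1+\bar\beta_2\la,\beta_2+\bar\beta_1\la,\la)$ determined by (\ref{defbetas}); but through a triangular point $(x_1,x_2,x_1x_2)$ there pass, \emph{in addition}, the flat discs $\la\mapsto(x_1,\la,x_1\la)$ and $\la\mapsto(\la,x_2,x_2\la)$. These are genuine complex geodesics, because $\Upsilon(\omega,\cdot)$ (respectively $\Psi(\omega,\cdot)$) restricts to the identity on them and so supplies a holomorphic left inverse; moreover they are precisely the fibres of $x\mapsto\Psi(\cdot,x)$ (respectively $x\mapsto\Upsilon(\cdot,x)$) over constant functions. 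By Corollary \ref{KandC} the maps $\Psi(\omega,\cdot)$ and $\Upsilon(\omega,\cdot)$ are Carath\'eodory extremals, so the locus along which one of them is constant on a geodesic is intrinsic, and I would characterise $\E_{\mathrm{tri}}$ as the set of points lying on such a degenerate geodesic. The delicate point, and the first real obstacle, is to prove that \emph{no} such extra geodesic passes through a non-triangular point, which amounts to a classification of the complex geodesics of $\E$.

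It then remains to show that the isotropy group $\Aut_0\E$ equals $G_0=\{x\mapsto(\mu x_1,\nu x_2,\mu\nu x_3):\mu,\nu\in\T\}\rtimes\langle F\rangle$, the isotropy of $G$ at $0$ obtained from $L_\up,R_\chi$ with $\up,\chi$ rotations. Since $\E$ is bounded, Cartan's uniqueness theorem guarantees that any $\Phi\in\Aut_0\E$ is determined by its derivative $T=d\Phi_0\in GL(3,\C)$, and $T$ must preserve the infinitesimal Kobayashi indicatrix at $0$. Passing to the infinitesimal form of the formula in Corollary \ref{KandC}, this indicatrix is $\{v\in\C^3:\max(|v_1|+|v_3|,|v_2|+|v_3|)<1\}$, whose linear automorphism group one computes to be the diagonal torus together with the flip, namely $\T^3\rtimes\langle F\rangle$.

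Here lies the second, and I expect the decisive, obstacle: the indicatrix admits the full torus $\T^3$, whereas $G_0$ realises only the subtorus $\{(\mu,\nu,\mu\nu)\}$, so the infinitesimal data at the fixed point do \emph{not} by themselves determine the isotropy. One must feed in the global shape of $\E$ to force the third phase to be the product of the first two. I would do this using the invariance of the hypersurface $\E_{\mathrm{tri}}=\{x_3=x_1x_2\}$ from the previous step (or, equivalently, the defining inequality (4) of Theorem \ref{characE}, in which the cross term $|x_2-\bar x_1x_3|$ is not invariant under an independent rotation of $x_3$): matching the quadratic relation $x_3=x_1x_2$ under $T=\diag(\mu,\nu,\rho)$ forces $\rho=\mu\nu$, and a parallel argument disposes of the spurious symmetries after composing with the flip. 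Once $T$ is exhibited as the derivative of some $g\in G_0$, Cartan's theorem gives $\Phi=g\in G$, and together with the normalisation this yields $\Aut\E\subseteq G$. The principal difficulty throughout is the interplay between the geodesic classification needed for the invariance of $\E_{\mathrm{tri}}$ and this infinitesimal-to-global upgrade in the isotropy computation.
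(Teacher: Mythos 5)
Note first that the paper itself contains \emph{no} proof of this statement: it is posed as a conjecture, and only the note added in proof records that it was settled later, in the eprint arXiv:0708.0689. So your proposal must stand on its own, and while its skeleton is sensible, it does not yet constitute a proof. Several of your ingredients are correct and checkable from the paper: the $G$-orbit of the origin is the set $\mathcal{T}$ of triangular points and $G$ acts transitively on it (Remark \ref{orbitof0}, Lemma \ref{transonT}); by Corollary \ref{KandC} the indicatrix at $0$ is $\{v\in\C^3:\max(|v_1|+|v_3|,|v_2|+|v_3|)<1\}$; its linear automorphisms are exactly the diagonal torus together with the flip (the extreme points of its closure form the two sets $\T\times\T\times\{0\}$ and $\{(0,0)\}\times\T$, which an invertible linear map must preserve separately); and the isotropy of $G$ at $0$ is $\{(\mu x_1,\nu x_2,\mu\nu x_3):\mu,\nu\in\T\}$ extended by the flip. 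But the two steps you flag as obstacles are genuine gaps, and the second, as you actually argue it, fails. On the first: your characterisation of $\mathcal{T}$ via ``degenerate geodesics'' can be made automorphism-invariant (Carath\'eodory extremals transport under automorphisms, and the flat discs do carry constant extremals), but the essential half --- that no complex geodesic through a non-triangular point carries a constant Carath\'eodory extremal --- is precisely a classification theorem for the complex geodesics of $\E$, which you do not prove and which the paper's own technology cannot reach: Section \ref{retract} records that even the Lempert-type equality $\mathcal{C}_\E=\delta_\E$ was unknown for $\E$ at the time. Without this, you cannot even normalise $\Phi(0)=0$.

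The decisive failure is in the isotropy step. From $\Phi(0)=0$, $\Phi(\mathcal{T})=\mathcal{T}$ and $d\Phi_0=T$ you cannot ``match the quadratic relation $x_3=x_1x_2$ under $T$'': $\Phi$ is not known to be linear, and invariance of $\mathcal{T}$ yields at first order only that $T$ preserves $T_0\mathcal{T}=\{v_3=0\}$, which the indicatrix already gave you. What it does yield is that $\Phi|_{\mathcal{T}}$ is an automorphism of $\D^2$ fixing the origin, hence coincides on $\mathcal{T}$ with the element $g\in G_0$ whose derivative is $\diag(\mu,\nu,\mu\nu)$; but then $h=g^{-1}\circ\Phi$ fixes $\mathcal{T}$ pointwise, has $dh_0=\diag(1,1,\sigma)$ with $|\sigma|=1$, and nothing in your argument excludes $\sigma\neq 1$. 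Cartan's uniqueness theorem says $h$ is \emph{determined} by $\sigma$, not that $\sigma=1$; and the remark that $\diag(1,1,\sigma)$ is not a linear self-map of $\E$ is beside the point, since $h$ need not equal its linear part. To kill this residual circle you need an input beyond the indicatrix and $\mathcal{T}$. One that works: condition (3) of Theorem \ref{characE} shows that $\E$ is invariant under the weighted circle action $\theta_t(x)=(e^{it}x_1,e^{it}x_2,e^{2it}x_3)$; since $dh_0$ commutes with $d(\theta_t)_0$, Cartan's theorem applied to $\theta_{-t}\circ h\circ\theta_t\circ h^{-1}$ shows that $h$ commutes with every $\theta_t$, whence the Taylor components of $h$ have pure weights, so that $h_1=x_1$, $h_2=x_2$ and $h_3=\sigma x_3+fx_1^2+gx_1x_2+kx_2^2$. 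Fixing $\mathcal{T}$ pointwise forces $f=k=0$ and $\sigma+g=1$; then applying condition (3) of Theorem \ref{characE} to the image of the points $x=(rc,r,c)\in\E$, $0<r,c<1$, gives the requirement $c\bigl(1+r|1-\sigma|\bigr)<1$, which as $r,c\uparrow 1$ forces $\sigma=1$, i.e. $h=\mathrm{id}$. Some argument of this kind is indispensable; without it your proof stops exactly at the point you yourself call decisive, and even with it the invariance of $\mathcal{T}$ remains open, so the proposal as a whole is a programme rather than a proof.
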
 
\begin{Remark}\label{orbitof0} \rm
The orbit of $(0,0,0)$ under $G$ is the set $\mathcal{T}$  of triangular points in $\E$.  Observe that $L_\up, R_\chi$ and $F$ all leave $\mathcal{T}$  invariant  (if $\Psi(.,x)$ is constant then so is $\up\circ\Psi(.,x)$), and so $\mathcal{T}$ is invariant under $G$.  Moreover $G$ acts transitively on $\mathcal{T}$, as the following lemma shows.
\end{Remark}
\begin{Lemma}\label{transonT}
If $x$ is a triangular point of $\E$ then $\up\cdot x \cdot\chi=(0,0,0)$ where $\up, \chi \in \Aut\D$ are given by
\begin{equation}\label{upchi}
\up(z)=\frac{z-x_1}{\bar x_1 z-1} \mbox{  and  } \chi(z) = \frac{z+\bar x_2}{x_2z+1}.
\end{equation}
\end{Lemma}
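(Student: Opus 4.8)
The plan is to reduce the claim to a short computation with the operation $\dia$, using explicit coordinates for $\tau(\up)$ and $\tau(\chi)$. Since $x$ is triangular, condition (2) of Theorem~\ref{characE} gives $|x_1|<1$ and $|x_2|<1$, so $\up$ and $\chi$ genuinely lie in $\Aut\D$ and, by Lemma~\ref{LactionE}, the points $\up\cdot x$ and $\up\cdot x\cdot\chi$ are defined and lie in $\E$. The first task is to write $\up$ and $\chi$ in the normal form $z\mapsto\omega\frac{z-\al}{\bar\al z-1}$ underlying the definition~(\ref{deftau}) of $\tau$. Matching coefficients shows $\up(z)=\Psi(z,\tau(\up))$ with $\omega=1$, $\al=x_1$, hence $\tau(\up)=(x_1,\bar x_1,1)$; for $\chi$ one must take $\omega=-1$ and $\al=-\bar x_2$, giving $\tau(\chi)=(\bar x_2,-x_2,-1)$. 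The sign $\omega=-1$ in the second factor is the only point requiring a little care.

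Next I would compute $\up\cdot x$. Writing $x=(x_1,x_2,x_1x_2)$ and inserting $\tau(\up)$ into the rational formula for the left action established in the proof of Theorem~\ref{actionE},
\[
\up\cdot x=\frac{1}{1-\bar x_1 x_1}\bigl(x_1-x_1,\; x_2-\bar x_1\,x_1x_2,\; x_1x_2-x_1x_2\bigr)=(0,x_2,0),
\]
so $\up\cdot x$ is again triangular with vanishing first and third coordinates. Finally I would apply the defining formula~(\ref{defdia}) for $\dia$ to $(0,x_2,0)\dia\tau(\chi)$ with $\tau(\chi)=(\bar x_2,-x_2,-1)$: the common denominator is $1-x_2\bar x_2=1-|x_2|^2\neq0$, and each of the three numerators collapses to $0$ (for instance the middle one is $-x_2-x_2\cdot(-1)=0$). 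Hence $\up\cdot x\cdot\chi=(0,0,0)$, as required.

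A slightly more conceptual route uses the matrices $M_{\tau(\up)}$, $M_x$ and $M_{\tau(\chi)}$. Since $M_{u\dia v}$ is proportional to $M_uM_v$, associativity gives that $M_{\up\cdot x\cdot\chi}$ is proportional to $M_{\tau(\up)}M_xM_{\tau(\chi)}$; a direct multiplication shows this product is a nonzero scalar multiple of $\diag(0,-1)=M_{(0,0,0)}$, and because the $(2,2)$ entry of every $M_z$ is normalised to $-1$ this forces $\up\cdot x\cdot\chi=(0,0,0)$. Either way the argument is a routine verification; there is no serious obstacle beyond bookkeeping the parameter $\omega=-1$ for $\chi$ and checking that the denominators $1-|x_1|^2$ and $1-|x_2|^2$ are nonzero, which is exactly what triangularity (via $|x_1|,|x_2|<1$) guarantees.
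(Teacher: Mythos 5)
Your proposal is correct, and your second, ``more conceptual'' route is in fact exactly the paper's own proof: since $x$ is triangular, $M_x$ factors as the rank-one product of the column $(x_1,1)^T$ with the row $(x_2,\,-1)$, and the paper multiplies out $M(\up)\,M_x\,M(\chi)$ using this factorization, obtaining $\la\,\diag\bigl(0,\,(1-|x_1|^2)(1-|x_2|^2)\bigr)$ for some nonzero $\la$, i.e.\ a nonzero multiple of $M_{(0,0,0)}$, whence $\up\cdot x\cdot\chi=(0,0,0)$. Your primary route --- computing $\tau(\up)=(x_1,\bar x_1,1)$ and $\tau(\chi)=(\bar x_2,-x_2,-1)$ (with the sign $\omega=-1$, which you rightly flag as the one delicate point) and then evaluating $\up\cdot x=(0,x_2,0)$ and $(0,x_2,0)\dia\tau(\chi)=(0,0,0)$ directly from formula (\ref{defdia}) --- is an equivalent coordinate-level verification, and every step of it checks out, including the definedness of the $\dia$ products since $|x_1|,|x_2|<1$. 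What the paper's matrix version buys is brevity: the rank-one factorization collapses the whole triple product into two short vector computations, and one never needs the coordinates of $\tau(\up)$ or $\tau(\chi)$ at all. What your coordinate version buys is the explicit intermediate point $\up\cdot x=(0,x_2,0)$, which makes visible how the left action kills the first and third coordinates and the right action then kills the second.
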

\begin{proof}
Let $M(\chi),M(\up)$ be the $2\times 2$ matrices corresponding to $\chi, \up$ respectively.   $M_x$ has rank one and
\begin{eqnarray*}
M_{\up\cdot x \cdot\chi}&=&\la M(\up) M_x M(\chi) =
	\la \left[\begin{array}{cc} 1 & -x_1 \\ \bar x_1 & -1\end{array}\right]
	\left[\begin{array}{c} x_1 \\ 1 \end{array} \right] \left[\begin{array}{cc} x_2 & -1\end{array}\right]
	\left[\begin{array}{cc} 1 & \bar x_2 \\  x_2 & 1\end{array}\right] \\
	&=&	\la\left[\begin{array}{cc} 0 & 0 \\ 0  & (1-|x_1|^2)(1-|x_2|^2) \end{array}\right]
\end{eqnarray*}
for some non-zero $\la$.  It follows that $\up\cdot x \cdot\chi=(0,0,0)$.
\end{proof}
By combining this lemma with the Schwarz Lemma, Theorem \ref{schwarzL}, we can obtain an explicit necessary and sufficient condition for the existence of an analytic map $\ph:\D \to \bar\E$ mapping any given pair of points in $\D$ to a given pair of points $x,y \in \E$ of which one is triangular.  
\begin{Corollary} \label{schwpick}
Let $x,y \in \E$, let $\la_1,\la_2$ be distinct points of  $\D$ and suppose that $x_1x_2=x_3$.  There exists an analytic map $\ph: \D \to \bar\E$ such that $\ph(\la_1) = x$ and 
$\ph(\la_2) = y$ if and only if
\begin{eqnarray*}
  &\max &\left\{  \frac{(1-|x_1|^2)|y_3-y_1y_2|+|y_1-\bar y_2y_3-x_1(1+|y_1|^2-|y_2|^2-|y_3|^2)+x_1^2(\bar{y}_1-y_2\bar{y}_3)|}{|1-\bar x_1y_1|^2-|y_2-\bar x_1y_3|^2} , \right. \\
     &&\left.  \frac{(1-|x_2|^2)|y_3-y_1y_2|+|y_2-\bar y_1y_3-x_2(1-|y_1|^2+|y_2|^2-|y_3|^2)+x_2^2(\bar y_2-y_1\bar y_3)|}{|1-\bar{x}_2y_2|^2-|y_1-\bar{x}_2y_3|^2}   \right\}\\
 {} &&\leq \quad |d(\la_1,\la_2)|
\end{eqnarray*}
where $d$ denotes the pseudohyperbolic distance on $\D$.
\end{Corollary}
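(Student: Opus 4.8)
The plan is to exploit the fact that the group $G$ of Theorem \ref{agroup} acts transitively on the triangular points of $\E$ (Remark \ref{orbitof0}) in order to reduce the two-point problem to the one-point problem solved by Theorem \ref{schwarzL}. Since $x_1x_2=x_3$, the point $x$ is triangular, so Lemma \ref{transonT} supplies $\up,\chi\in\Aut\D$, namely $\up(z)=(z-x_1)/(\bar x_1z-1)$ and $\chi(z)=(z+\bar x_2)/(x_2z+1)$, with $\up\cdot x\cdot\chi=(0,0,0)$. Write $\Phi=L_\up R_\chi$ for the automorphism $v\mapsto\up\cdot v\cdot\chi$ of $\E$; by Theorem \ref{actionE} and Lemma \ref{LactionE} it is the restriction of a map holomorphic near $\bar\E$ that carries $\bar\E$ bijectively onto $\bar\E$ and $\E$ onto $\E$. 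I would then choose $m\in\Aut\D$ with $m(\la_1)=0$ and set $\la_0=m(\la_2)$, so that $|\la_0|=|d(\la_1,\la_2)|\ne0$. The correspondence $\ph\mapsto\Phi\circ\ph\circ m^{-1}$ is a bijection between the analytic maps $\D\to\bar\E$ carrying $\la_1,\la_2$ to $x,y$ and the analytic maps $\psi:\D\to\bar\E$ with $\psi(0)=(0,0,0)$ and $\psi(\la_0)=y'$, where $y':=\up\cdot y\cdot\chi\in\E$.

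By the equivalence (1)$\Leftrightarrow$(2) of Theorem \ref{schwarzL}, applied to the point $y'\in\E$ and to $\la_0$, such a $\psi$ exists if and only if $\max\{D(y'),\,D(Fy')\}\le|\la_0|=|d(\la_1,\la_2)|$, where $F$ is the flip of the first two coordinates, $D(y')=\|\Psi(\cdot,y')\|_{H^\infty}$, and $D(Fy')=\|\Upsilon(\cdot,y')\|_{H^\infty}$. It therefore remains only to evaluate these two $H^\infty$ norms in terms of $x$ and $y$ and to match them with the two expressions in the statement.

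The key simplification is that each norm depends on only one of $x_1,x_2$. Indeed, by equation (\ref{compup}) and its right-action analogue $\Psi(\cdot,v\cdot\chi)=\Psi(\cdot,v)\circ\chi$ we have $\Psi(\cdot,y')=\up\circ\Psi(\cdot,y)\circ\chi$; since $\chi\in\Aut\D$, pre-composition by $\chi$ leaves the supremum over $\D$ unchanged, so $D(y')=\sup_{z\in\D}|\up(\Psi(z,y))|$ involves $y$ and $x_1$ alone. Composing the two linear fractional maps gives
\[
\up(\Psi(z,y))=\frac{(y_3-x_1y_2)z+(x_1-y_1)}{(\bar x_1y_3-y_2)z+(1-\bar x_1y_1)},
\]
and the image-disc computation behind equation (\ref{imPsi}) (for $z\mapsto(Az+B)/(Cz+D)$ the image of $\D$ is the disc of centre $(B\bar D-A\bar C)/(|D|^2-|C|^2)$ and radius $|AD-BC|/(|D|^2-|C|^2)$) expresses $D(y')$ as the sum of the modulus of the centre and the radius. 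Here $|D|^2-|C|^2=|1-\bar x_1y_1|^2-|y_2-\bar x_1y_3|^2$, which is the claimed denominator. The determinant collapses to $AD-BC=(1-|x_1|^2)(y_3-y_1y_2)$, while grouping $B\bar D-A\bar C$ by powers of $x_1$ produces $y_1-\bar y_2y_3-x_1(1+|y_1|^2-|y_2|^2-|y_3|^2)+x_1^2(\bar y_1-y_2\bar y_3)$; these are exactly the two summands of the first listed numerator. Symmetrically, $\Upsilon(\cdot,y')=\chi_*\circ\Upsilon(\cdot,y)\circ\up_*$ shows $D(Fy')=\sup_{z\in\D}|\chi_*(\Upsilon(z,y))|$ depends on $y$ and $x_2$ alone, and the identical computation with $(y_1,y_2,x_1)$ replaced by $(y_2,y_1,x_2)$ delivers the second listed expression.

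The only real labour is the expansion in the last paragraph, but it telescopes cleanly, as the factorization of $AD-BC$ shows; the conceptual core is the reduction of the first paragraph together with the observation that pre-composition by a disc automorphism preserves an $H^\infty$ norm, which is what decouples $x_1$ from $x_2$. I expect the one point genuinely needing care to be the verification that $\Phi$ permutes \emph{all} of $\bar\E$ (so that the bijection $\ph\leftrightarrow\psi$ respects the target $\bar\E$ and not merely $\E$), but this is precisely the content of Lemma \ref{LactionE} and the holomorphy assertion of Theorem \ref{actionE}, so no new difficulty arises.
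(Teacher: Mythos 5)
Your proof is correct, and its skeleton is exactly the paper's: invoke Lemma \ref{transonT} to move the triangular point $x$ to $(0,0,0)$ by the automorphism $L_\up R_\chi$, normalise $\la_1,\la_2$ to $0,\,d(\la_1,\la_2)$ by a disc automorphism, and apply the equivalence (1)$\Leftrightarrow$(2) of Theorem \ref{schwarzL} to $y'=\up\cdot y\cdot\chi\in\E$, arriving at the criterion $\max\{D(y'),D(Fy')\}\le d(\la_1,\la_2)$, which is the paper's inequality (\ref{criter}). Where you genuinely diverge is in converting this criterion into the stated formula. The paper stays with the representing matrices: from $M_{y'}=\zeta M(\up)M_yM(\chi)$ it reads off $y_1'y_2'-y_3'$ as a determinant, and it extracts $y_1'-\bar y_2'y_3'$ and $1-|y_2'|^2$ from the second column of the identity for $M_{y'}JM_{y'}^*$ with $J=\diag(-1,1)$; there the $x_2$-dependence of the first quantity disappears because conjugating $J$ by the (Hermitian) matrix of $\chi$ produces only the scalar factor $(1-|x_2|^2)$, which cancels in the quotient. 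You instead evaluate the $H^\infty$ norms directly: $\Psi(\cdot,y')=\up\circ\Psi(\cdot,y)\circ\chi$, pre-composition by $\chi\in\Aut\D$ preserves the sup norm, and the centre-plus-radius formula underlying (\ref{imPsi}), applied to the explicitly composed M\"obius map, yields the first expression (and the flipped version via $\up_*,\chi_*$ and Theorem \ref{agroup} yields the second). Your route makes the decoupling of $x_1$ from $x_2$ conceptually transparent from the outset -- it is a norm-invariance statement rather than an algebraic cancellation -- at the price of expanding the composed linear fractional map by hand; the paper's $J$-identity delivers the numerator, the determinant term and the (manifestly positive) denominator in a single matrix computation. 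The only small point worth adding to your write-up is why the disc-image formula is applicable, i.e.\ why the pole of $\up\circ\Psi(\cdot,y)$ lies outside $\Delta$: this is immediate because $\Psi(\cdot,y)$ maps $\Delta$ into $\D$ (as $y\in\E$) and $\up$ is analytic on $\D$, so the composition is analytic on $\Delta$.
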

\begin{proof}
Let $\up,\chi$ be given by equations (\ref{upchi}), so that $\up\cdot x \cdot\chi =(0,0,0), $ and let $y'= \up\cdot y \cdot\chi $.
 Some automorphism of $\D$ maps $\la_1,\la_2$ to $0,d(\la_1,\la_2)$, and so, by Theorem \ref{schwarzL}, the required map $\ph$ exists if and only if
\begin{equation} \label {criter}
\max \left\{ \frac{|y'_1-\bar y'_2y'_3|+|y'_1y'_2-y'_3|}{1-|y'_2|^2}, \frac{|y'_2-\bar y'_1y'_3|+|y'_1y'_2-y'_3|}{1-|y'_1|^2}\right\}
 \leq d(\la_1,\la_2).
\end{equation}
We have
\[
M_{y'}=\zeta \left[\begin{array}{cc} 1 & -x_1 \\ \bar x_1 & -1 \end{array}\right] M_y  
\left[\begin{array}{cc} 1 & \bar x_2 \\ x_2 & 1 \end{array}\right] 
\]
for some non-zero $\zeta$.  Hence
\[
y'_1y'_2-y'_3 = \det M_{y'} = -\zeta^2 (1-|x_1|^2)(1-|x_2|^2)(y_1y_2-y_3).
\]
Furthermore, if $J=\mathrm{diag(-1,1)}$,
\[
M_y J M_y^* = \left[ \begin{array}{cc} |y_1|^2-|y_3|^2 & y_1-\bar y_2y_3 \\ \bar y_1-y_2\bar y_3 & 1-|y_2|^2 \end{array} \right].
\]
Since
\[
  \left[ \begin{array}{cc} 1 & \bar x_2 \\ x_2 & 1 \end{array}\right]  J  \left[ \begin{array}{cc} 1 & \bar x_2 \\ x_2 & 1 \end{array}\right] = (1-|x_2|^2)J,
\]
we have
\[
M_{y'}JM_{y'}^*=|\zeta|^2 (1-|x_2|^2) \left[\begin{array}{cc} y_3-x_1y_2 & x_1 - y_1  \\ \bar x_1 y_3-y_2 & 1 - \bar x_1y_1 \end{array}\right] J \left[\begin{array}{cc} y_3-x_1y_2 & x_1 - y_1  \\ \bar x_1 y_3-y_2 & 1 - \bar x_1y_1 \end{array}\right]^*.
\]
The entries in the second column of this identity give us
\begin{eqnarray*}
 y'_1-\bar y'_2y'_3 &=& |\zeta|^2(1-|x_2|^2)\{(x_1-y_1)(1-x_1\bar y_1) - (y_3-x_1y_2)(x_1\bar y_3 - \bar y_2)\},\\
   &=&|\zeta|^2(1-|x_2|^2)\{ -y_1+\bar y_2y_3 +x_1(1+|y_1|^2-|y_2|^2-|y_3|^2)-x_1^2(\bar y_1-y_2\bar y_3)\},\\
1-|y'_2|^2 &=& |\zeta|^2(1-|x_2|^2)\{|1-x_1\bar y_1|^2 - |x_1\bar y_3 - \bar y_2|^2\}.
\end{eqnarray*}
On substituting these formulae and their symmetric analogues into the criterion (\ref{criter}) we obtain the statement in the lemma.
\end{proof}
Here is a less concrete but more assimilable version of this result.
\begin{Corollary} \label{KCd}
If $x,y \in\E$ and at least one of $x,y$ is a triangular point then
\[
\mathcal{C}_\E(x,y)=\mathcal{K}_\E(x,y)=\delta_\E(x,y). 
\]
\end{Corollary}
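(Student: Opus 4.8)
The plan is to reduce everything to the homogeneous case already settled in Corollary \ref{KandC}, using the fact that the Carath\'eodory, Kobayashi and Lempert functions are invariants of biholomorphic maps. Concretely, it suffices to produce an automorphism of $\E$ carrying the triangular point to the origin $(0,0,0)$, and then to quote Corollary \ref{KandC}.

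First, since $\mathcal{C}_\E$, $\mathcal{K}_\E$ and $\delta_\E$ are all symmetric in their two arguments, I may assume without loss of generality that it is $x$ that is triangular, i.e. $x_1x_2 = x_3$. By Lemma \ref{transonT} there exist $\up,\chi \in \Aut\D$ (given explicitly by equations (\ref{upchi})) such that $\up\cdot x\cdot\chi = (0,0,0)$. By Theorems \ref{actionE} and \ref{agroup} the map $g = L_\up R_\chi : z \mapsto \up\cdot z\cdot\chi$ is an automorphism of $\E$, and $g(x) = (0,0,0)$; moreover $y' := g(y)$ again lies in $\E$.

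Next I invoke the biholomorphic invariance of the three functions (see \cite[Chapter 1]{JP}): for any $g \in \Aut\E$ and any $x,y \in \E$,
\[
\mathcal{C}_\E(x,y) = \mathcal{C}_\E(g(x),g(y)), \quad
\mathcal{K}_\E(x,y) = \mathcal{K}_\E(g(x),g(y)), \quad
\delta_\E(x,y) = \delta_\E(g(x),g(y)).
\]
Applying this with the automorphism $g$ above gives $\mathcal{C}_\E(x,y) = \mathcal{C}_\E(0,y')$, and likewise for $\mathcal{K}_\E$ and $\delta_\E$. Finally, Corollary \ref{KandC} applied to the point $y' \in \E$ yields $\mathcal{C}_\E(0,y') = \mathcal{K}_\E(0,y') = \delta_\E(0,y')$, and combining these equalities gives $\mathcal{C}_\E(x,y) = \mathcal{K}_\E(x,y) = \delta_\E(x,y)$.

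I expect no genuine computational obstacle here, as the explicit automorphisms of Lemma \ref{transonT} and the homogeneous computation of Corollary \ref{KandC} already carry the weight. The only point requiring care is the reduction itself: the hypothesis that one of the points is \emph{triangular} is precisely what furnishes, via the transitivity of $G$ on the set $\mathcal{T}$ of triangular points of $\E$ (Remark \ref{orbitof0}), an automorphism sending that point to the origin. For a generic pair $x,y$, neither triangular, no such reduction to $(0,0,0)$ is available, which is exactly why the hypothesis appears in the statement.
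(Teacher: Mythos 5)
Your proof is correct and is precisely the paper's own argument: the paper deduces the result ``from Corollary \ref{KandC}, the invariance of $\mathcal{C}_\E,\mathcal{K}_\E$ and $\delta_\E$ under automorphisms and Remark \ref{orbitof0}'', which is exactly your reduction of the triangular point to the origin via the automorphism $L_\up R_\chi$ of Lemma \ref{transonT}. You have merely written out the details that the paper leaves implicit.
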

The result is immediate from Corollary \ref{KandC}, the invariance of $\mathcal{C}_\E,\mathcal{K}_\E$ and $\delta_\E$ under automorphisms and Remark \ref{orbitof0}.

 \section{The distinguished boundary of the tetrablock} \label{shilov}
Let $\Omega$ be a domain in $\C^n$ with closure $\bar\Omega$ and let $A(\Omega)$ be the algebra of continuous scalar functions on $\bar\Omega$ that are holomorphic on $\Omega$.  A {\em boundary} for $\Omega$ is a subset $C$ of $\bar\Omega$ such that every function in $ A(\Omega)$ attains its maximum modulus on $C$.  It follows from the  
theory of uniform algebras  \cite[Corollary 2.2.10]{browder} that (at least when $\bar\Omega$ is polynomially convex, as in the case of $\E$) there is a smallest closed boundary of $\Omega$, contained in all the closed boundaries of $\Omega$ and called the {\em distinguished boundary} of $\Omega$ (or the {\em Shilov boundary} of $A(\Omega)$).  In this section we shall determine the distinguished boundary of $\E$; we denote it by $\dE$.

Clearly, if there is a function $g\in A(\E)$  and a point $p \in \bar\E$ such that $g(p) = 1$ and $ | g(x) | < 1$ for all $x \in \bar\E \setminus\{p\}$, then $p$ must belong to $\dE$. We call such a point $p$ a {\em peak point} of $\bar\E$ and the function $g$ a {\em peaking function} for $p$. 
  
 An {\em analytic disc} in $\bar\E$  is a non-constant analytic function
  $f:\D \to \bar\E$.  It follows easily from the maximum modulus principle that no element of the image $f(\D)$ can be a peak point. 
\begin{Theorem} \label{characdE}
For $x \in \C^3$ the following are equivalent.
\begin{enumerate}
\item[(1)] $x_1 = \bar x_2 x_3, |x_3| = 1 $ and $ |x_2| \leq 1$;
\item[(2)] either $x_1 x_2 \neq x_3$ and $\Psi(\cdot, x)$ is an automorphism of $\D$ or $x_1 x_2 = x_3$ and $|x_1| = |x_2| = |x_3| =1$;
 \item[(3)]  $x $ is a peak point of $\bar\E$;
\item[(4)] there exists a $2 \times 2$ unitary matrix $U$ such that $x=\pi(U)$;
\item[(5)] there exists a symmetric $2 \times 2$ unitary matrix $U$ such that $x=\pi(U)$;
\item[(6)]  $x \in \dE$, the distinguished boundary of $\E$
\item[(7)] $x\in\bar\E$ and $|x_3|=1$.
\end{enumerate}
\end{Theorem}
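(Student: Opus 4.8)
The plan is to prove the seven conditions equivalent through the web (1)$\Leftrightarrow$(7), (1)$\Leftrightarrow$(2), (1)$\Rightarrow$(5)$\Rightarrow$(4)$\Rightarrow$(7), together with (7)$\Rightarrow$(3)$\Rightarrow$(6)$\Rightarrow$(7), which links all seven. The equivalences (1)$\Leftrightarrow$(7) and (1)$\Leftrightarrow$(2) are geometric. For (1)$\Leftrightarrow$(7) I would feed $x\in\bar\E$ with $|x_3|=1$ into condition (6) of Theorem \ref{closE}: the right-hand side $1-|x_3|^2$ is then $0$, forcing $x_1=\bar x_2x_3$ and $x_2=\bar x_1x_3$, while the extra clause gives $|x_1|=|x_2|\le 1$; conversely these relations put $x$ on the boundary of the inequality in (6). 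For (1)$\Leftrightarrow$(2) I would use the explicit image disc $(\ref{imPsi})$: when $|x_2|<1$ and $x$ is non-triangular, $\Psi(\cdot,x)$ is an automorphism of $\D$ precisely when that image disc is $\D$ itself, i.e. its centre is $0$ and its radius is $1$, which unwinds to $x_1=\bar x_2x_3$ and $|x_3|=1$; the degenerate case $|x_2|=1$ then lands in the triangular alternative with $|x_1|=|x_2|=|x_3|=1$.

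For the matricial conditions I would build the unitary by hand. Given (1), set $U=\left[\begin{smallmatrix} x_1 & b\\ b & x_2\end{smallmatrix}\right]$ with $b$ a square root of $x_1x_2-x_3$; using $x_1=\bar x_2x_3$ and $|x_3|=1$ one checks $|x_1|^2+|b|^2=|b|^2+|x_2|^2=1$ and that (either) root makes the off-diagonal entry of $U^*U$ vanish, so $U$ is a symmetric unitary with $\pi(U)=x$, giving (5). Then (5)$\Rightarrow$(4) is trivial, and (4)$\Rightarrow$(7) is immediate: a unitary $U$ has $|\det U|=1$ and $\|U\|=1$, so $\pi(U)\in\bar\E$ by Theorem \ref{closE}(7) and its third coordinate has modulus $1$.

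The genuinely function-theoretic part is the triangle (7)$\Rightarrow$(3)$\Rightarrow$(6)$\Rightarrow$(7). The implication (3)$\Rightarrow$(6) is the standard uniform-algebra fact that a peak point lies in every closed boundary, hence in $\dE$. For (6)$\Rightarrow$(7) I would show that $K=\{x\in\bar\E:|x_3|=1\}$ is itself a closed boundary, so that minimality of $\dE$ gives $\dE\subseteq K$. Indeed every $x\in\E$ lies, by condition (9) of Theorem \ref{characE}, on a disc $\la\mapsto\ph_{\beta_1\beta_2}(\la)=(\beta_1+\bar\beta_2\la,\beta_2+\bar\beta_1\la,\la)$ with $|\beta_1|+|\beta_2|<1$, whose third coordinate is $\la$; this affine disc extends continuously to $\overline{\D}$ with image in $\bar\E$, so for any $f\in A(\E)$ the maximum of $|f\circ\ph_{\beta_1\beta_2}|$ over $\overline{\D}$ is attained on $|\la|=1$, i.e. at a point of $K$. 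Taking the supremum over $x\in\E$ gives $\sup_{\bar\E}|f|=\max_K|f|$, as required.

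The main obstacle is (7)$\Rightarrow$(3): exhibiting, for each $x^0\in K$, a peaking function in $A(\E)$. The trap is that the tempting choice $\Psi(\omega,\cdot)$ is \emph{not} continuous on $\bar\E$ at the triangular points with $|x_2|=1$, so it is unavailable and one must work with honest polynomials. I would split into two cases. If $x^0=(\mu_1,\mu_2,\mu_1\mu_2)$ is triangular with $|\mu_1|=|\mu_2|=1$, the diagonal unitary $U^0=\diag(\mu_1,\mu_2)$ yields $g(x)=\det\tfrac12(I+(U^0)^*A)=\tfrac14(1+\bar\mu_1x_1+\bar\mu_2x_2+\overline{\mu_1\mu_2}x_3)$, which is independent of the off-diagonal entries of $A$ and peaks at $x^0$. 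For non-triangular $x^0$ (so $|x_1^0|=|x_2^0|<1$) I would first use the automorphism $L_\up$ of Theorem \ref{actionE} with $\al=x_1^0$ to move $x^0$ to the normal form $(0,0,\omega_0)$, $|\omega_0|=1$ (both new coordinates collapse, the second because $x_2^0=\bar x_1^0x_3^0$), and then peak there with the polynomial $g(x)=\tfrac12(1+\bar\omega_0x_3)\,\bar\omega_0(x_3-x_1x_2)$: the first factor has modulus $1$ only when $x_3=\omega_0$, while the second has modulus $|x_1x_2-x_3|$, which by condition (5) of Theorem \ref{closE} is $\le1$ with equality only when $x_1=x_2=0$ and $|x_3|=1$, so the product peaks exactly at $(0,0,\omega_0)$. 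Pulling back through the biholomorphism $L_\up$, which preserves $A(\E)$ and hence peak points, gives a peaking function for $x^0$. Checking that these polynomials really have modulus $\le1$ throughout $\bar\E$, with sharp equality analysis, is where the inequalities of Theorem \ref{closE} do the decisive work.
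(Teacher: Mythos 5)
Your proposal is correct, and it diverges from the paper's proof at the structural level in a way worth recording. The constructive ingredients essentially coincide with the paper's: the peaking function $\tfrac14(1+\bar x_1y_1+\bar x_2y_2+\bar x_3y_3)$ for triangular points; the reduction of a non-triangular point to a single normal form by the group action of Theorem~\ref{actionE}, followed by a polynomial peaking function built from $y_3-y_1y_2$ (the paper peaks at $(0,0,-1)$ with $\tfrac12\bigl((y_3-y_1y_2)-1\bigr)$, while your product $\tfrac12(1+\bar\omega_0y_3)\,\bar\omega_0(y_3-y_1y_2)$ at $(0,0,\omega_0)$ works equally well, by the same appeal to conditions (3) and (5) of Theorem~\ref{closE}); and the symmetric unitary whose off-diagonal entry is a square root of $x_1x_2-x_3$. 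The real difference is how (3), (6) and (7) are tied into the equivalence. The paper proves (3)$\Rightarrow$(2) directly --- its hardest step, requiring a Cayley-transform construction of an analytic disc in $\bar\E$ through any boundary point at which $\Psi(\cdot,x)$ fails to be an automorphism, plus the disc $z\mapsto(z,x_2,zx_2)$ in the triangular case --- and then obtains (6)$\Rightarrow$(3) from Browder's theorem that the peak points of a uniform algebra with metrizable character space form a boundary, together with closedness of the peak set. You bypass both: your cycle (7)$\Rightarrow$(3)$\Rightarrow$(6)$\Rightarrow$(7) needs only the elementary fact that a peak point lies in every boundary, plus your direct proof that $K=\{x\in\bar\E:|x_3|=1\}$ is a closed boundary, via the foliation of $\E$ by the flat discs $\ph_{\beta_1\beta_2}$ of condition (9) of Theorem~\ref{characE} (whose boundary circles lie in $K$ by condition (9) of Theorem~\ref{closE}), whence $\dE\subseteq K$ by minimality of the distinguished boundary. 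This is more elementary and self-contained --- the only uniform-algebra input is the existence of a smallest closed boundary --- whereas the paper's route yields as a by-product the geometrically informative fact that every point of $\partial\E$ outside $\dE$ lies on an analytic disc in $\bar\E$, which your argument does not provide. Finally, your (4)$\Rightarrow$(7) ($\|U\|=1$ and $|\det U|=1$, then condition (7) of Theorem~\ref{closE}) is a genuine shortcut over the paper's orthonormal-columns computation for (4)$\Rightarrow$(1), the loop being closed instead through (7)$\Leftrightarrow$(1).
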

\begin{proof}
 We first prove the equivalence of conditions (1) to (5); the proof is most easily presented as two completely separate cases. We first consider the simpler case $x_1 x_2 = x_3$.
 We show that each of the conditions is equivalent to the applicable part of~(2): $x_1 x_2 = x_3$ and $|x_1| = |x_2| = |x_3| =1$.

\noindent
(1)$\Leftrightarrow$(2) If (1) holds for a triangular point $(x_1, x_2, x_3)$ then $|x_1x_2|=|x_3|=1$, and since $|x_1|  \le 1,|x_2| \leq 1$ we must have 
 $|x_1| = |x_2| = |x_3| =1$, and hence (2) holds.  The converse is elementary.

\noindent (2)$\Rightarrow$(3) Let $x$ satisfy (2). Define $g:\bar\E \to \C$ by 
\[
  g(y_1, y_2, y_3) = (\bar x_1 y_1 + \bar x_2 y_2 + \bar x_3 y_3 + 1)/ 4.
\]
 For  $y$ in $\bar\E$ we have $|y_i | \leq 1, i=1,2,3$, and so $| g(y) | \leq 1$. Further, if $| g(y) | =1$, then each $\bar x_i y_i$ must be 1 and so $y=x$. This shows that $g$ is a peaking function for $x$ relative to $\bar\E$.  Hence $x$ is a peak point of  $\bar\E$.

\noindent (3)$\Rightarrow$(2) Consider a triangular point $x$ that is a peak point of $\bar\E$.  Suppose that $|x_3| < 1$.  Note that $|x_1x_2| < 1$ and so either $| x_1 | <  1$ or $| x_2 | <  1$. We assume that $| x_1 | <  1$. Consider the function given by $g(z) = (z, x_2, z x_2)$. It follows from condition (3) of Theorem~\ref{closE} that $g(\D) \subset \bar\E$. Thus $g$ is an analytic disc in $\bar\E$ which contains the point $x$.  This contradicts the hypothesis that  $x$ is a peak point, and so we have $|x_3|=1.$  Since $|x_1x_2|=1$ it is also true that $|x_1|=|x_2|= 1$.

\noindent (2)$\Rightarrow$(5)$\Rightarrow$(4)$\Rightarrow$(2). If $x$ satisfies (2) then it is clear that the diagonal matrix  $U=\diag(x_1, x_2)$ satisfies condition (5).  Trivially (5)$\Rightarrow$(4). Any unitary which is triangular is diagonal and hence (4) implies (2).

Thus conditions (1) to (5) are equivalent in the triangular case.

 Now consider the non-triangular case, $x_1 x_2 \neq x_3$.  Note that if $x\in\bar\E$ then $|x_1| <1$ and $|x_2| <1$, for otherwise conditions ($3$) and ($3'$) of Theorem~\ref{closE} show that  $x\in\bar\E$ is triangular. 
 
\noindent (1)$\Leftrightarrow$(2)
 If $x$ satisfies (1) then $\bar x_1x_3=x_2\bar x_3 x_3=x_2$ and so
 \[ 
 \Psi(z,x) = \frac{ x_3 z - x_1 }{ x_2 z - 1} =  \frac{ x_3 z - x_3 \bar x_2 }{ x_2 z - 1} 
 =  x_3 \frac{ z -  \bar x_2 }{ x_2 z - 1} 
 \]
 and $\Psi(.,x)$  is an automorphism of $\D$.
  Conversely, if $ \Psi(.,x)$  is an automorphism of $\D$ then $x\in\bar\E$ and the image $\Psi(\D,x)$ has centre~0 and radius~1. As we noted in equation~(\ref{imPsi}) the centre is $(x_1 - \bar x_2 x_3)/(1 - |x_2|^2)$ and  the radius is 
 $|x_1 x_2 - x_3| /( 1 - |x_2|^2)$.
 Thus $x_1=\bar x_2x_3, x_2 = \bar x_1 x_3$ and 
  $|x_2 \bar x_2 x_3 - x_3| /( 1 - |x_2|^2) = |x_3| = 1$.  Hence (2) implies (1).
  
 \noindent (4)$\Rightarrow$(1)$\Rightarrow$(5)$\Rightarrow$(4)
Suppose (4): there exists a unitary matrix 
 \begin{equation*}
 U = \left[ \begin{array}{cc} x_1 &  b \\ c & x_2
\end{array} \right],
\end{equation*}
such that $\det U = x_1 x_2 - b c = x_3$. It is immediate  that $|x_3| = |\det U| =1$ and
$|x_1| \leq ||U|| = 1, |x_2| \leq 1$.  Since the columns of $U$ are orthonormal $x_1\bar b + c\bar x_2 =0$ and so
\[
0=b(x_1\bar b + c\bar x_2) = x_1|b|^2 + bc\bar x_2 = x_1(1-|x_2|^2) +(x_1x_2 - x_3) \bar x_2 = x_1 - \bar x_2 x_3.
\]
Thus (4)$\Rightarrow$(1).  Suppose (1) holds.  Let $\zeta\in\T$ be a square root of $-x_3$. Then $\bar x_1\zeta + \bar\zeta x_2 =0$ and so
  \begin{equation*}
 U = \left[ \begin{array}{cc} x_1 &  \zeta \sqrt{1 - |x_2|^2} \\ \zeta\sqrt{1 - |x_2|^2}  & x_2
\end{array} \right]
\end{equation*}
 is a symmetric unitary matrix satisfying the  conditions of (5).  Trivially (5)$\Rightarrow$(4).
 
\noindent (2)$\Rightarrow$(3)
 Suppose $x$ satisfies (2) (and is non-triangular).
 We will exhibit a peaking function for $x$. 
  Let $\up$ be the inverse of the automorphism $\Psi(.,x)$  of
   $\D$.  Since
\[
\Psi(.,\up\cdot x)=\Psi(.,\tau(\up)\dia x) = \up \circ \Psi(.,x) = \mathrm{id}_\D = \Psi(.,0,0,-1),
\]
it follows that $\up\cdot x = (0,0,-1)$.

There is a natural right action of $\Aut\D$ on $A(\E)$:  for $\chi\in \Aut\D, g\in A(\E)$,
\[
g\cdot \chi(x) = g(\chi\cdot x) = g(\tau(\chi)\dia x).
\] 
If $g$ is a peaking function for a point $y\in\bar\E$ then $g\cdot\chi^{-1}$ is a peaking function for $\chi\cdot y$.  Thus it suffices to find a peaking function for the point $(0,0,-1)$. Consider the function $g(y) =  ((y_3 - y_1 y_2) - 1)/2$ on $\bar\E$. It follows from condition (3) of Theorem~\ref{closE}, that $| y_3 - y_1 y_2| \leq 1$ and hence
     $| g(y) | \leq 1 $ on $\bar\E$. Certainly $|g(0,0,-1)|=1$, and if
 $ | g(y) | = 1 $, then we must have
   $y_3 - y_1 y_2 = -1$ and, again by condition (3) of Theorem~\ref{closE}, 
    $ | y_1 |^2 =  | y_2 |^2 = 0$. Thus 
    $y=(0,0,-1)$, and hence  $g$ peaks at the point $(0,0,-1)$.  Consequently $x$ is a peak point and (2)$\Rightarrow$(3).
 
(3)$\Rightarrow$(2)  Suppose the non-triangular point $x$ is a peak point of $\bar\E$ but $\Psi(.,x)$ is not an automorphism of $\D$: we shall show that $x$ lies on an analytic disc in $\bar\E$ and obtain a contradiction.   The conclusion is trivial if $x\in\E$, and so we can assume that $x\in\partial\E$, the topological boundary of $\E$.  By condition (2) of Theorems~\ref{closE} and \ref{characE},  $\| \Psi \|_{H^\infty} = 1$ and so the closed disc $\Psi(\Delta,x)$ is a proper subset of $\Delta$ that touches $\T$ at a unique point, $\zeta$ say, so that $\Psi(\eta,x) = \zeta$ for some $\eta\in\T$.  Let us make use of the Cayley transform
\[
C_\eta(z) = \frac{\eta+z}{\eta-z},
\]
which maps $\Delta$ to the closed right half plane $\C_+$ and maps $\eta$ to $\infty$.
The (non-constant) M\"obius transformation $C_\zeta \circ\Psi(.,x)\circ C_\eta^{-1}$ maps $\C_+$ to a proper subset of itself and fixes $\infty$; it follows that 
\[
C_\zeta \circ\Psi(.,x)\circ C_\eta^{-1}(z) = az+b
\]
 for some $a>0$ and $b$ such that $\mathrm{Re}~ b >0$.  Let $F(z,w) = az + b + w\mathrm{Re}~ b.$  For each $w\in\D,\  F(.,w)$ is non-constant and maps $\C_+$ to a proper subset of itself.  Thus $C_\zeta^{-1}\circ F(.,w)\circ C_\eta$ is a non-constant M\"obius transformation that maps $\Delta$ to itself, hence can be written $\Psi(.,f(w))$ for some $f(w)\in\bar\E$.  The map $f$ is rational and so is an analytic disc in $\bar\E$, and $f(0)=x$.  This is a contradiction and so 
(3)$\Rightarrow$(2).

We have proved the equivalence of conditions (1) to (5) in both the triangular and non-triangular cases.  Next we show that (3)$\Leftrightarrow$(6).  According to \cite[Theorem 2.3.5]{browder}, (because $\bar\E$ is a metric space) the set $P$ of peak points of $\bar\E$ is a boundary for $\E$; it is clearly contained in every boundary of $\E$, so that (3)$\Rightarrow$(6).  By the equivalence of (1) and (3), $P$ is closed in $\bar\E$.  Hence $P$ is the smallest closed boundary of $\E$, that is $P=\dE$ and so (6)$\Rightarrow$(3). \\ 
(1)$\Leftrightarrow$(7)  If (1) holds then, by condition (3) of Theorem \ref{closE}, $x\in\bar\E$ and hence (7) holds.  If (7) holds then, by condition (6) of Theorem \ref{closE}, $x_1=\bar x_2x_3$, while by condition (3) of the same theorem $|x_2|\leq 1$.  Thus (7)$\Rightarrow$(1).
\end{proof}
\begin{Corollary}
$\dE$ is homeomorphic to $\Delta \times \T$.
\end{Corollary}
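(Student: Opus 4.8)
The plan is to read off an explicit parametrization of $\dE$ directly from Theorem \ref{characdE}. By the equivalence (1)$\Leftrightarrow$(6) established there, a point $x=(x_1,x_2,x_3)$ lies in $\dE$ if and only if
\[
x_1 = \bar x_2 x_3, \qquad |x_3| = 1, \qquad |x_2| \le 1.
\]
The key observation is that such a point is completely determined by the pair $(x_2,x_3)$, since the remaining coordinate is recovered as $x_1 = \bar x_2 x_3$; and as $x$ ranges over $\dE$, the pair $(x_2,x_3)$ ranges over exactly $\Delta \times \T$, the closed unit disc crossed with the unit circle.

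Accordingly, I would define the map
\[
\Phi : \dE \to \Delta \times \T, \qquad (x_1,x_2,x_3) \mapsto (x_2,x_3),
\]
whose candidate inverse is
\[
\Phi^{-1} : \Delta \times \T \to \dE, \qquad (\zeta,\omega) \mapsto (\bar\zeta\omega,\zeta,\omega).
\]
First I would verify that $\Phi^{-1}$ really takes values in $\dE$: for $(\zeta,\omega)\in\Delta\times\T$ the point $(\bar\zeta\omega,\zeta,\omega)$ satisfies the three displayed conditions (its first coordinate equals $\bar\zeta\omega$ by construction, while $|\omega|=1$ and $|\zeta|\le 1$), hence lies in $\dE$ by Theorem \ref{characdE}. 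That $\Phi$ maps into $\Delta\times\T$ is immediate from $|x_3|=1$ and $|x_2|\le 1$. A one-line computation then shows $\Phi\circ\Phi^{-1}=\mathrm{id}$ and $\Phi^{-1}\circ\Phi=\mathrm{id}$, the latter using precisely the relation $x_1=\bar x_2 x_3$ that holds on $\dE$.

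Finally, both maps are continuous: $\Phi$ is the restriction of a coordinate projection, and $\Phi^{-1}$ is given by continuous coordinate formulae. Hence $\Phi$ is a homeomorphism and $\dE \cong \Delta\times\T$. (Alternatively, since $\dE$ is a closed, hence compact, subset of $\bar\E$ and $\Delta\times\T$ is Hausdorff, the continuity of $\Phi$ and its bijectivity would already force $\Phi$ to be a homeomorphism, without exhibiting $\Phi^{-1}$ explicitly.)

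There is essentially no obstacle here: the entire content has been packaged into the characterization of $\dE$ by condition (1) of Theorem \ref{characdE}, and once that identity is in hand the homeomorphism is given by the transparent rule of forgetting the redundant first coordinate. The only point requiring a moment's care is checking that the inverse genuinely lands in $\dE$, i.e.\ that every pair in $\Delta\times\T$ arises from a distinguished-boundary point, which is exactly the surjectivity built into the parametrization $x_1=\bar x_2 x_3$.
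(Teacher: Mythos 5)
Your proof is correct and is essentially the paper's own argument: the paper's entire proof is the single remark that $(x_2,x_3)\mapsto(\bar x_2 x_3,x_2,x_3)$ is a homeomorphism from $\Delta\times\T$ onto $\dE$, which is exactly your $\Phi^{-1}$. You have simply written out the routine verifications (well-definedness via condition (1) of Theorem \ref{characdE}, bijectivity, and continuity in both directions) that the paper leaves implicit.
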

For the map $\Delta \times \T \to \dE:(x_2,x_3) \mapsto (\bar x_2 x_3, x_2, x_3)$ is a homeomorphism.
\begin{Corollary}
$\dE$ is the closure of $\Aut\D$ in $\bar\E$.
\end{Corollary}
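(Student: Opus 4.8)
The plan is to recall that $\Aut\D$ is realized inside $\bar\E$ through the embedding $\tau$ of Lemma \ref{tauhom}, and then to compare $\tau(\Aut\D)$ with the explicit description of $\dE$ furnished by Theorem \ref{characdE} and the preceding corollary. First I would write out $\tau(\Aut\D)$ concretely: by equations (\ref{formup}) and (\ref{deftau}), a point of $\tau(\Aut\D)$ has the form $(\omega\al,\bar\al,\omega)$ with $\al\in\D$ and $\omega\in\T$. Setting $x_2=\bar\al$ and $x_3=\omega$, we have $x_1=\omega\al=\bar x_2 x_3$, $|x_3|=1$ and $|x_2|<1$, so that
\[
\tau(\Aut\D)=\{(\bar x_2 x_3,x_2,x_3): x_2\in\D,\ x_3\in\T\}.
\]

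Next I would invoke the characterization of $\dE$ in condition (1) of Theorem \ref{characdE}, namely $\dE=\{(\bar x_2 x_3,x_2,x_3): x_2\in\Delta,\ x_3\in\T\}$, which is exactly the image of the homeomorphism $\Delta\times\T\to\dE:(x_2,x_3)\mapsto(\bar x_2x_3,x_2,x_3)$ of the previous corollary. Comparing the two displays shows $\tau(\Aut\D)\subset\dE$, and that under this homeomorphism $\tau(\Aut\D)$ corresponds precisely to $\D\times\T$. Since $\D$ is dense in the closed disc $\Delta$ and $\T$ is the whole circle, $\D\times\T$ is dense in $\Delta\times\T$; transporting back through the homeomorphism, $\tau(\Aut\D)$ is dense in $\dE$.

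Finally I would close the argument by observing that $\dE$ is closed in $\bar\E$ — this was established in the proof of Theorem \ref{characdE}, where $\dE$ is identified with the closed set of peak points of $\bar\E$ (alternatively, $\dE\cong\Delta\times\T$ is compact). Consequently the closure of $\tau(\Aut\D)$ taken in $\bar\E$ coincides with its closure taken in $\dE$, and by the density just noted this closure is all of $\dE$. I do not expect a genuine obstacle here: the only point requiring care is the bookkeeping between the open disc $\D$ appearing in $\tau(\Aut\D)$ and the closed disc $\Delta$ appearing in $\dE$, together with the remark that taking closures inside the closed subset $\dE$ is harmless.
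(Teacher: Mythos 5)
Your proof is correct and follows essentially the same route as the paper: identify $\tau(\Aut\D)$ explicitly as $\{(\bar x_2x_3,x_2,x_3):x_2\in\D,\ x_3\in\T\}$ via the definition (\ref{deftau}), compare with the description of $\dE$ from Theorem \ref{characdE}, and conclude by density. The paper simply compresses the density and closedness bookkeeping into the phrase ``clearly a dense subset of $\dE$,'' whereas you spell it out through the homeomorphism with $\Delta\times\T$; the content is the same.
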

\begin{proof}
By the definition (\ref{deftau}), the monomorphism $\tau$ identifies an automorphism of $\D$ with a point $(\omega \al, \bar\al,\omega)$ with $\omega\in\T, \al\in\D$.  Thus $\tau$ identifies $\Aut\D$ with the set $\{x:x_1=\bar x_2x_3, |x_2|<1,|x_3|=1\}$, which is clearly a dense subset of $\dE$.
\end{proof}

\section{The analytic retraction problem} \label{retract}
 We have seen in Corollary \ref{KCd} that $C_\E, K_\E$ and $\delta_\E$ all agree at any pair of points of which one is triangular.  Is it true that $C_\E=K_\E=\delta_\E$?  Note that the analogous equality holds for any convex domain, by a theorem of Lempert \cite{JP}, and so in particular for the convex domain $R_I(2,2)$, the unit ball of the space of $2\times 2$ complex matrices.  We have also seen that $\E$ is closely related to $R_I(2,2)$, by the analytic surjection $\pi:R_I\to \E$.
We ask: is $\E$ an analytic retract of $R_I(2,2)$?  In other words, do there exist analytic maps $h:\E\to R_I$ and $f: R_I\to \E$ such that $f\circ h= \mathrm{id}_\E$?  If the answer is yes then it follows that $C_\E=K_\E=\delta_\E$, since the following observation is a consequence of the fact that analytic maps are contractive for $C$ and $\delta$.
\begin{Lemma}
Let  $E,B$ be domains and let $E$ be an analytic retract of $B$. \\
{\rm (1)} $C_E= C_B|E$ and $\delta_E=\delta_B|E$.\\
 {\rm (2)} If  $C_B=\delta_B$ then $C_E=\delta_E$.
\end{Lemma}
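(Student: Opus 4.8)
The plan is to use the single fundamental property that holomorphic maps are contractive for both the Carath\'eodory distance and the Lempert function: if $g:\Omega_1\to\Omega_2$ is analytic then $C_{\Omega_2}(g(u),g(v))\le C_{\Omega_1}(u,v)$ and $\delta_{\Omega_2}(g(u),g(v))\le\delta_{\Omega_1}(u,v)$ for all $u,v$ (see \cite[Chapter 1]{JP}). I would write the retraction data as analytic maps $h:E\to B$ and $f:B\to E$ with $f\circ h=\mathrm{id}_E$, and interpret $C_B|E$ as the function $(x,y)\mapsto C_B(h(x),h(y))$ on $E\times E$, which coincides with the literal restriction of $C_B$ to $E\times E$ when $h$ is an inclusion. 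Part (1) is then proved by squeezing $C_E(x,y)$ between $C_B(h(x),h(y))$ from both sides.

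For the first inequality I apply contractivity to the analytic map $h$, which gives $C_B(h(x),h(y))\le C_E(x,y)$. For the reverse inequality I apply contractivity to the analytic map $f$ at the points $h(x),h(y)\in B$, obtaining $C_E(f(h(x)),f(h(y)))\le C_B(h(x),h(y))$, and then use $f\circ h=\mathrm{id}_E$ to rewrite the left-hand side as $C_E(x,y)$. Combining the two inequalities yields $C_E(x,y)=C_B(h(x),h(y))$ for all $x,y\in E$, which is the asserted identity $C_E=C_B|E$. The identical argument with $\delta$ in place of $C$ gives $\delta_E=\delta_B|E$, completing part (1).

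Part (2) is then an immediate consequence of part (1): for $x,y\in E$ we have $C_E(x,y)=C_B(h(x),h(y))$ and $\delta_E(x,y)=\delta_B(h(x),h(y))$, so the hypothesis $C_B=\delta_B$ forces $C_E(x,y)=\delta_E(x,y)$. One could alternatively invoke the general inequality $C_E\le\delta_E$ noted earlier, but it is not needed here.

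I do not expect a genuine obstacle: the whole argument rests on the contractivity of analytic maps, and the only point requiring care is that the relation $f\circ h=\mathrm{id}_E$ is exactly what converts the two \emph{one-sided} contraction inequalities into an equality. The conceptual content is simply that a retract inherits the invariant distances of the ambient domain on the nose.
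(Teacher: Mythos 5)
Your proposal is correct and is essentially the paper's own argument: the paper likewise deduces the lemma from the contractivity of analytic maps for $C$ and $\delta$, composing with $h$ and the retraction and using $f\circ h=\mathrm{id}_E$ to turn the two one-sided inequalities into equalities. The only cosmetic difference is that the paper's proof unfolds contractivity explicitly (choosing extremal maps to $\D$ for $C$ and extremal analytic discs for $\delta$ and composing them with the retraction data), whereas you invoke it as a known fact from \cite[Chapter 1]{JP}.
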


We could therefore resolve the question (does $C_\E=\delta_\E$?) if we could find an analytic $h:\E\to R_I$ such that $\pi\circ h = \mathrm{id}_\E$.  In fact there is no such $h$, and we conjecture that $\E$ is not an analytic retract of $R_I(2,2)$.
\begin{Theorem} \label{noretract}
The map $\pi:R_I(2,2) \to \E$ has no analytic right inverse.
\end{Theorem}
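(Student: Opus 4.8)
The plan is to argue by contradiction, so suppose there is an analytic $h:\E\to R_I(2,2)$ with $\pi\circ h=\mathrm{id}_\E$. Since $\pi$ records the two diagonal entries and the determinant, $h$ must take the form
\[
h(x)=\begin{bmatrix} x_1 & b(x)\\ c(x) & x_2\end{bmatrix},
\]
for functions $b,c$ holomorphic on $\E$ with $b(x)c(x)=x_1x_2-x_3$. As $h$ maps into the \emph{open} ball $R_I(2,2)$, each entry is bounded in modulus by $\|h(x)\|<1$; in particular $|b|<1$ and $|c|<1$ on $\E$. Write $g(x):=x_1x_2-x_3$, so that $bc=g$.

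First I would restrict to the set $\mathcal{T}$ of triangular points. By the reduction at the start of the proof of Theorem~\ref{characE}, $\mathcal{T}=\{(x_1,x_2,x_1x_2):|x_1|<1,\ |x_2|<1\}$, which is biholomorphic to the bidisc and hence connected. On $\mathcal{T}$ we have $g=0$, so $bc$ vanishes identically there; since the holomorphic functions on a connected manifold form an integral domain, either $b$ or $c$ vanishes identically on $\mathcal{T}$. By the symmetry $b\leftrightarrow c$ (transposition of $h$) I may assume $c|_{\mathcal{T}}\equiv 0$.

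Next I would upgrade this to global information about $b$. The differential of $g$ never vanishes (for $\partial g/\partial x_3=-1$), so near each point of $\mathcal{T}=\{g=0\}$ the function $g$ serves as a local holomorphic coordinate; as $c$ vanishes where this coordinate does, $c$ is locally divisible by $g$, and the local quotients agree off $\mathcal{T}$ and patch to a holomorphic $\tilde c$ on $\E$ with $c=g\tilde c$. Substituting into $bc=g$ gives $g(b\tilde c-1)\equiv 0$, whence $b\tilde c\equiv 1$ by the identity theorem. Consequently $b$ is \emph{zero-free} on $\E$ and $c=g/b$.

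Finally I would restrict to the analytic disc $\delta:\D\to\E$, $\delta(\zeta)=(0,0,\zeta)$, which lies in $\E$ by condition~(5) of Theorem~\ref{characE} (indeed $-|\zeta|^2+2|\zeta|<1$ for $|\zeta|<1$). Along $\delta$ one has $g\circ\delta=-\zeta$, so the zero-free holomorphic function $\psi:=b\circ\delta:\D\to\D$ satisfies
\[
|\psi(\zeta)|=\frac{|\zeta|}{|c(\delta(\zeta))|}>|\zeta|\qquad(\zeta\in\D),
\]
because $|c|<1$. Since $\psi$ has no zeros, $q(\zeta):=\zeta/\psi(\zeta)$ is holomorphic on $\D$ with $q(0)=0$ and $|q|<1$, so Schwarz's lemma gives $|q(\zeta)|\le|\zeta|$, i.e.\ $|\psi(\zeta)|\ge 1$ for every $\zeta$ --- contradicting $|\psi|<1$. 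I expect the one delicate step to be the passage from $c|_{\mathcal{T}}\equiv 0$ to a globally zero-free $b$, which relies on the smoothness and connectedness of $\mathcal{T}$ and the holomorphic division above; granting that, the contradiction is a one-line application of the classical Schwarz lemma.
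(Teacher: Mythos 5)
Your proof is correct, and after the common opening it takes a genuinely different --- and far more economical --- route than the paper's. Both arguments begin identically: any right inverse $h$ must have diagonal entries $x_1,x_2$ and off-diagonal entries $b,c$ with $bc=x_1x_2-x_3$, and both then establish that one off-diagonal entry is zero-free. Here your treatment is in fact the more careful one: the paper disposes of this step in one sentence by appealing to irreducibility of the polynomial $P(x)=x_1x_2-x_3$, whereas you prove it, using the integral-domain property of the connected triangular set $\mathcal{T}\cong\D^2$ together with local holomorphic division across the smooth hypersurface $\{P=0\}$ (legitimate since $\partial P/\partial x_3=-1\neq 0$) and patching. From there the two proofs diverge completely. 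The paper's argument is boundary-driven: it combines $\det(1-h(x)^*h(x))>0$ with the AM--GM inequality, lets $x$ tend to non-triangular points of $\partial\E$, performs an inner-outer factorization along the discs $\ph_\beta$, concludes that a certain function is a constant inner function, and finally contradicts pluriharmonicity of $\log|\la_1|$ by an explicit second-derivative computation. You instead stay entirely in the interior and restrict to the single disc $\zeta\mapsto(0,0,\zeta)$, on which the constraints say that $-\zeta$ factors as a product of two holomorphic maps $\D\to\D$; the classical Schwarz lemma (applied to $q(\zeta)=\zeta/\psi(\zeta)$, legitimate since $\psi$ is zero-free) forbids this. Your ending is not only elementary --- no radial limits, inner functions or pluriharmonicity --- but proves something formally stronger: the embedded disc $\zeta\mapsto(0,0,\zeta)$ admits no analytic lift through $\pi$ into the open ball $R_I(2,2)$ at all, even though it lifts into the closed ball, e.g.\ by $\zeta\mapsto\left[\begin{smallmatrix}0&1\\ -\zeta&0\end{smallmatrix}\right]$, so your argument isolates exactly where openness of the ball is used. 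One further remark: your steps on $\mathcal{T}$ and the global division are actually dispensable, since on the disc itself $\psi\chi=-\zeta$ has only a simple zero at the origin, so exactly one of $\psi,\chi$ vanishes at $0$, the other factor is then automatically zero-free on $\D$, and the Schwarz lemma applies to it; thus the entire proof can be run on that one disc.
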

\begin{proof}
Suppose $h=[h_{ij}]:\E \to R_I$ satisfies $\pi\circ h=\mathrm{id}_\E$.
Then $h_{11}(x)=x_1, \ h_{22}(x) = x_2$ and $h_{12}h_{21}(x) = x_1x_2-x_3$.
Let us write $P(x)= x_1x_2-x_3.$   Since $P$ is an irreducible polynomial and $h_{12} h_{21} = P$, it follows that $P$ divides one of $h_{12}, h_{21}$ -- say $h_{21} = Pg$ and hence $h_{12}= 1/g$ where $g, 1/g$ are analytic scalar functions on $\E$ and $Pg, 1/g \in H^\infty$.   By equation (\ref{formdet}), for any $x \in \E$,
\begin{eqnarray} \label{ing}
0 < \det(1-h(x)^*h(x)) &=& 1-|x_1|^2 - |x_2|^2 +|x_3|^2 -|x_1x_2-x_3|^2|g(x)|^2 -\frac{1}{|g(x)|^2} \nn \\
	&\leq& 1-|x_1|^2 - |x_2|^2 +|x_3|^2 -2|x_1x_2-x_3|.
\end{eqnarray}
Consider any non-triangular point $y \in \partial\E$.  By conditions (5) of Theorems \ref{characE} and \ref{closE},
\[
1-|y_1|^2-|y_2|^2+|y_3|^2 - 2|y_1y_2 -y_3| =0
\]
and therefore 
\[
1-|x_1|^2 - |x_2|^2 +|x_3|^2 -2|x_1x_2-x_3| \to 0 \mbox {  as  }x\to y, \ x\in\E.
\]
It follows from inequalities (\ref{ing}) that
\[
|x_1x_2-x_3|^2 |g(x)|^2 - \frac{1}{|g(x)|^2} -2|x_1x_2-x_3| \to 0 \mbox{ as }x \to y.
\]
By a refinement of the inequality of the means,
\[
|x_1x_2-x_3|^2 |g(x)|^2 - \frac{1}{|g(x)|^2}  \to 0 \mbox{ as }x \to y.
\]
Since $1/P(x)$ tends to the finite limit $1/P(y)$ as $x \to y$, we have
\begin{equation*} 
|g(x)|^2 - \frac{1}{|P(x)g(x)|^2}  \to 0 \mbox{ as }x \to y,
\end{equation*}
and since $Pg$ is bounded on $\E$,
\begin{equation} \label{Pgsq}
|P(x)^2g(x)^4|  \to 1 \mbox{ as }x \to y.
\end{equation}
Fix non-zero $\beta_1,\beta_2$ such that $|\beta_1|+|\beta_2| < 1$ and let $\beta=(\beta_1,\beta_2$).  We shall restrict the relation (\ref{Pgsq}) to the disc $\ph_\beta(\D) \subset \E$ and obtain a contradiction.

We claim that $\ph_\beta(\D)$ contains a unique triangular point of $\E$.  Indeed, $\ph_\beta(\la)$ is triangular if and only if 
\[
(\beta_1+\bar\beta_2\la)(\beta_2+\bar\beta_1\la)=\la,
\]
or 
\begin{equation} \label{quadeq}
\la^2 - \frac{1-|\beta_1|^2-|\beta_2|^2}{\bar\beta_1\bar\beta_2}\la +\frac{\beta_1\beta_2}{\bar\beta_1\bar\beta_2} =0.
\end{equation}
If the roots of this equation are $\la_1,\la_2$ then $|\la_1\la_2|=1$ and
\[
|\la_1 +\la_2| -2 = \frac{1-|\beta_1|^2-|\beta_2|^2}{|\beta_1\beta_2|} -2 = \frac{1-(|\beta_1|+|\beta_2|)^2}{|\beta_1\beta_2|} >0,
\]
so that exactly one of $\la_1,\la_2$ belongs to $\D$ -- say $|\la_1|<1, |\la_2|>1$.  Note also that $\ph_\beta(\T) \subset \partial\E$ contains no triangular points.

Write down the explicit inner-outer factorisation of $P\circ \ph_\beta$:
\[
P\circ\ph_\beta(\la)= \bar\beta_1\bar\beta_2(\la-\la_1)(\la-\la_2) = \upsilon_\beta(\la) q_\beta(\la)
\]
where
\[
\up_\beta(\la)=\frac{\la-\la_1}{\bar\la_1\la-1},\quad q_\beta(\la)=\bar\beta_1\bar\beta_2(\bar\la_1\la-1)(\la-\la_2).
\]
Observe that $q_\beta$ is bounded away from zero on $\D$.  Let
\[
\psi_\beta= q_\beta (g\circ \ph_\beta)^2.
\]
Since $g\circ\ph_\beta$ is analytic on $\D$ and $\up_\beta q_\beta g\circ\ph_\beta \in H^\infty$, it follows that $g\circ\ph_\beta \in H^\infty$.  Hence both $\psi_\beta$ and $1/\psi_\beta \in H^\infty$.  Moreover, by relation (\ref{Pgsq}),
\[
|\psi_\beta(\la)|^2  \to 1 \mbox{  as  } \la\to \omega \in \T.
\]
Thus the radial limits of $\psi_\beta$ have modulus $1$ everywhere on $\T$, so that $\psi_\beta$ is inner.  Since $1/\psi_\beta \in H^\infty$, $\psi_\beta$ is constant, and hence
\[
 1=|\psi_\beta(0)| = |q_\beta(0) g\circ\ph_\beta(0)^2| = |\bar\beta_1\bar\beta_2 \la_2 g(\beta_1,\beta_2,0)^2|
\]
and therefore
\[
 |\beta_1\beta_2 g(\beta_1,\beta_2,0)^2| = \frac{1}{|\la_2|} = |\la_1|.
\]
Thus $|\la_1|$ is the modulus of an analytic function  of $\beta$ on the domain $\{(\beta_1,\beta_2): \beta_1 \neq 0, \beta_2 \neq 0, |\beta_1|+|\beta_2|<1\}$,
and hence $\log |\la_1|$ is a pluriharmonic function on this domain.

  Let $u(z)$ be the unique root in $\D$ of the quadratic equation (\ref{quadeq}) with 
$\beta_1=\beta_2= z$.  On the planar domain $0<|z|<\tfrac 12$, $u$ does not vanish and $\log|u(.)|$ is a harmonic function.  We have
\[
\bar z^2 u(z)^2 - (1-2z\bar z)u(z) + z^2 =0.
\]
Implicit differentiation of this relation (together with the implicit function theorem) yields
\[
\frac{\partial u}{\partial \bar z} = \frac{ -2u(\bar zu+z)}{2\bar z^2u-1+2z\bar z} = 
 u\frac{\partial u}{\partial  z}
\]
(one can check that the denominator of the middle term never vanishes when $|z| < \tfrac 12$),
\[
\frac{\partial^2 u}{\partial \bar z\partial z} = \frac{2(u-4z\bar z u - 4z^2)}{(2\bar z^2u -1 + 2z\bar z)^3}.
\]
Thus
\begin{eqnarray*}
 \frac{\partial^2 \log |u(z)|}{\partial \bar z\partial z} &=& \mathrm{Re} \left\{
\frac{1}{u} \frac{\partial^2 u}{\partial \bar z\partial z} - \frac{1}{u^2} \frac{\partial u}{\partial \bar z}\frac{\partial u}{\partial  z} \right\} \\
	&=& 8\mathrm{Re} \left\{ \frac{u - 2z\bar z u - \bar z^2u- z^2}{u(2\bar z^2u -1+2z\bar z)^3} \right\}.
\end{eqnarray*}
The right hand side is not identically zero: for example, it is non-zero at the point $z=\tfrac 13, u= \frac{7-3\sqrt{5}}{2}$.  This is a contradiction, and so the postulated analytic $h:\E\to R_I$ does not exist.
\end{proof}
\begin{Remark} \rm
{\em A fortiori} $\pi: R_{II}(2)\to\E$ has no right inverse either.
\end{Remark}

\section{Relation to the $\mu$-synthesis problem} \label{mu}
In the theory of robust control the {\em structured singular value} of an $m \times n$ matrix $A$, denoted by $\mu(A)$, is a cost function that generalizes the usual operator norm of $A$ and encodes structural information about the perturbations of $A$ that are being studied.  In this context a ``structure'' is identified with a linear subspace of $\C^{n \times m}$.  Let 
$E$ be such a subspace, and write
\[
\mu_E(A) = \left( \inf \{ ||X|| : X \in E, 1-AX  \mbox{ is singular} \} \right)^{-1},
\]
where we adopt the natural interpretation that $\mu_E(A)=0$ in the event that $1-AX$ is non-singular for all $X \in E$.  If $E =\C^{n \times m}$ then $\mu_E= ||.||$, while if $m=n$ and $E$ is the space of scalar multiples of the identity matrix then $\mu_E$ is the spectral radius.  For a given $E \subset \C^{n \times m}$ the $\mu$-synthesis problem is to construct, if possible, an analytic $m \times n$-matrix-valued function on $\D$ or the right half plane subject to a finite number of interpolation conditions such that
\[
\mu_E(F(\la)) \leq 1 \mbox{ for all $\la$ in the domain of $F$}.
\]
In the case that $\mu_E= ||.||$ the $\mu$-synthesis problem is the classical Nevanlinna-Pick problem, for which there is a detailed theory (e.g. \cite{BGR}).  More generally,
engineers have had some success in computing numerical solutions of $\mu$-synthesis problems \cite{matlab}, but there is a dearth of convergence results and existence theorems.  At present there is not even a sufficient theory to enable the numerical methods to be tested satisfactorily.  There is a clear need for a better understanding of the solvability or otherwise of $\mu$-synthesis problems.  Bercovici, Foia\c{s} and Tannenbaum \cite{BFT0,BFT1,BFT2} obtained some solvability criteria with the aid of variants of the commutant lifting theorem; however, the criteria they obtained are not easy to check.  A solution for the special case of the spectral Nevanlinna-Pick problem (that is, with $\mu_E$ being the spectral radius) for $2\times 2$ matrix functions and $2$ interpolation points follows from the theory  \cite{AY1, AY2}  of the symmetrised bidisc.
For more than two interpolation points even this very special case of $\mu$-synthesis is not yet well understood.

In the engineering literature (for example \cite{DP}) the space $E$ of matrices is usually taken to be given by a block diagonal structure.  If we confine ourselves to $2\times 2$ matrices the next natural case for study is that of the space of diagonal matrices:
\[
E= \mathrm{Diag} \stackrel{\rm def}{=} \{ \diag(z,w): z,w \in \C \}.
\]
This paper arises out of a study of the $\mu$-synthesis problem for $2\times 2$ matrices
where $\mu=\mu_\mathrm{Diag}$.  There is a simple connection between $\E$ and the set of matrices for which $\mu < 1$.
\begin{Theorem} \label{mult1}
An element $x$ of $\C^3$ belongs to $\E$ if and only if there exists 
$A \in \C^{2\times 2}$ such that $\mu_{\mathrm{Diag}} (A)< 1$ and $x = \pi(A).$  Similarly, $x \in \bar\E$ if and only if there exists 
$A \in \C^{2\times 2}$ such that $\mu_{\mathrm{Diag}} (A)\leq 1$ and $x = \pi(A).$ 
\end{Theorem}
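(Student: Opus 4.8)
The plan is to reduce both equivalences to a single determinant identity linking the structured singular value to the defining polynomial of $\E$. For $A=[a_{ij}]\in\C^{2\times 2}$ and $X=\diag(z,w)$ a direct expansion gives
\[
\det(1-AX)=1-a_{11}z-a_{22}w+\det(A)\,zw .
\]
Writing $x=\pi(A)=(a_{11},a_{22},\det A)$, the right-hand side is exactly the polynomial $1-x_1z-x_2w+x_3zw$ of Definition \ref{defE}. Hence, for $X=\diag(z,w)$, the matrix $1-AX$ is singular if and only if $1-x_1z-x_2w+x_3zw=0$; and since $\|X\|=\max(|z|,|w|)$, the reciprocal $\mu_{\mathrm{Diag}}(A)^{-1}$ is precisely the infimum of $\max(|z|,|w|)$ over all zeros $(z,w)$ of this polynomial (interpreted as $+\infty$ when there are none, consistent with $\mu_{\mathrm{Diag}}(A)=0$).

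First I would prove the two implications that start from a matrix. If $\mu_{\mathrm{Diag}}(A)<1$ then the infimum above exceeds $1$, so the polynomial has no zero with $|z|\le 1$ and $|w|\le 1$; by Definition \ref{defE} this says $x\in\E$. Likewise, if $\mu_{\mathrm{Diag}}(A)\le 1$ the infimum is at least $1$, so the polynomial has no zero with $|z|<1$ and $|w|<1$, which is precisely condition (0) of Theorem \ref{closE}, giving $x\in\bar\E$.

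For the converse implications I would produce a representing matrix of small norm and then invoke the elementary bound $\mu_{\mathrm{Diag}}(A)\le\|A\|$. This bound holds because $\|X\|<1/\|A\|$ forces $\|AX\|<1$, so $1-AX$ is invertible by the Neumann series; hence any diagonal $X$ making $1-AX$ singular satisfies $\|X\|\ge 1/\|A\|$, whence $\mu_{\mathrm{Diag}}(A)\le\|A\|$. Now if $x\in\E$, condition (7) of Theorem \ref{characE} furnishes $A$ with $\|A\|<1$ and $\pi(A)=x$, so $\mu_{\mathrm{Diag}}(A)\le\|A\|<1$; and if $x\in\bar\E$, condition (7) of Theorem \ref{closE} furnishes $A$ with $\|A\|\le 1$ and $\pi(A)=x$, so $\mu_{\mathrm{Diag}}(A)\le\|A\|\le 1$.

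The only delicate point, and the step I would treat most carefully, is the bookkeeping of strict versus non-strict inequalities. The condition $\mu_{\mathrm{Diag}}(A)<1$ corresponds to the infimum $\mu_{\mathrm{Diag}}(A)^{-1}$ being strictly greater than $1$, and hence to the polynomial having no zero in the \emph{closed} bidisc $|z|,|w|\le 1$ (matching Definition \ref{defE}), whereas $\mu_{\mathrm{Diag}}(A)\le 1$ corresponds to that infimum being merely $\ge 1$, and hence only to the \emph{open} bidisc $|z|,|w|<1$ (matching condition (0) of Theorem \ref{closE}). Keeping these two correspondences straight is exactly what makes the open and closed versions of the theorem come out with the correct inequalities.
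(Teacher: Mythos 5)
Your proof is correct, but its converse direction takes a genuinely different route from the paper's. Both arguments pivot on the same identity $\det(1-AX)=1-x_1z-x_2w+x_3zw$ for $X=\diag(z,w)$ and $x=\pi(A)$ (your sign $+x_3zw$ is the correct one; the paper's display (\ref{det}) has a sign slip), so that $\mu_{\mathrm{Diag}}(A)$ depends on $A$ only through $\pi(A)$ and measures how the zero set of the defining polynomial of $\E$ meets bidiscs. The direction ``matrix $\Rightarrow$ point'' is then essentially the same in both treatments: the paper phrases $\mu(A)<1$ as $\mu(A)\le 1/r$ for some $r>1$ and inflates the bidisc, while you argue directly with the infimum; your careful matching of the \emph{closed} bidisc with Definition \ref{defE} and of the \emph{open} bidisc with condition (0) of Theorem \ref{closE} is exactly the right bookkeeping. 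Where you diverge is ``point $\Rightarrow$ matrix'': the paper exhibits the explicit representing matrix $\left[\begin{smallmatrix} x_1 & x_1x_2-x_3 \\ 1 & x_2 \end{smallmatrix}\right]$ and, for the open case, relies on the (tacit) compactness argument that a zero variety disjoint from the compact set $\Delta^2$ is disjoint from $(r\D)^2$ for some $r>1$; you instead quote condition (7) of Theorems \ref{characE} and \ref{closE} to obtain a representing matrix with $\|A\|<1$ (respectively $\leq 1$) and finish with the Neumann-series bound $\mu_{\mathrm{Diag}}(A)\le\|A\|$. Your route is shorter and sidesteps the compactness step, at the price of importing the nontrivial equivalence (7) --- which is proved much earlier in the paper, so there is no circularity; the paper's route is self-contained given only Definition \ref{defE} and condition (0) of Theorem \ref{closE}, and it has the side benefit of producing a concrete representing matrix (and, implicitly, of showing that \emph{every} matrix representing $x$ has the same value of $\mu_{\mathrm{Diag}}$).
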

Henceforth we shall write $\mu$ for $\mu_\mathrm{Diag}$.
\begin{proof}
For $r >0$ and $A =[a_{ij}]\in \C^{2\times 2}$ observe that $\mu(A) \leq 1/r$ if and only if 
$||X|| \ge r$ whenever $X \in \mathrm{Diag}$ and $\det(1-AX)=0$.  If $X=\diag(z,w)$
then
\begin{eqnarray} \label{det}
\det(1-AX) &=& (1-a_{11}z)(1-a_{22}w) - a_{12}a_{21}zw \nonumber \\
	&=& 1 - a_{11}z - a_{22}w - (\det A)zw.
\end{eqnarray}
Thus $\mu(A) \leq 1/r$ if and only if the zero variety of the polynomial (\ref{det})
in $z,w$ does not meet the open bidisc $r\D \times r\D$.

Suppose that $\mu(A) < 1$ and $x=(a_{11},a_{22}, \det A)$.  For some $r > 1$ we have $\mu(A) \le 1/r$, and so the zero variety of the polynomial (\ref{det}) is disjoint from
$(r\D)^2$, hence {\em a fortiori} from $\Delta^2$.  Thus $x \in \E$.

Conversely, if $x \in \E$, then the zero variety of (\ref{det}) is disjoint from $(r\D)^2$ for some $r > 1$, and hence the matrix
\[
A=\left[ \begin{array}{cc} x_1 & x_1 x_2 -x_3 \\ 1 & x_2 \end{array} \right]
\]
satisfies $\mu(A) < 1$ and $x=\pi(A)$.

The proof of the second statement is similar.
\end{proof}
We have found the bounded $3$-dimensional domain $\E$ more amenable to study than the unbounded $4$-dimensional domain
\[
\Sigma \stackrel{\rm def}{=} \{ A \in \C^{2\times 2}: \mu(A) < 1\}.
\]
Every analytic function $F:\D \to \Sigma$ induces an analytic function $\pi \circ F:
\D \to \E$, where $\pi $ is the map defined in equation (\ref{defpi})
having the property that $A \in \Sigma$ if and only if $\pi(A) \in \E$.
Conversely, every analytic $\varphi: \D \to \E$ lifts to a map $F:\D \to \Sigma$ such that
$\pi \circ F = \varphi$, for example,
\[
F = \left[ \begin{array}{cc} \varphi_1 & \varphi_1\varphi_2 - \varphi_3\\
	1 & \varphi_2 \end{array} \right].
\]
More is true: the interpolation problems for $\Sigma$ and $\E$ are equivalent in the following sense.
\begin{Theorem} \label {Eandsigma}
Let $\la_1, \dots, \la_n$ be distinct points in $\D$ and let $A_k=[a_{ij}^k] \in \Sigma,
\, 1 \leq k \leq n.$  The following conditions are equivalent.
\begin{enumerate}
\item[\rm (1)] There exists an analytic function $F:\D \to \Sigma$ such that
$F(\la_k)= A_k, \, 1\leq k \leq n$;
\item[\rm(2)] there exists an analytic function $\varphi: \D \to \E$ such that 
$\varphi(\la_k) = \pi(A_k)$ and, if $A_k$ is a diagonal matrix, then 
\[
\varphi_3^\prime(\la_k) = a_{22}^k \varphi_1^\prime(\la_k) + a_{11}^k \varphi_2^\prime(\la_k), \, 1\leq k \leq n.
\]
\end{enumerate}
\end{Theorem}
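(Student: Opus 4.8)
The plan is to prove the two implications separately, the forward one being routine and the reverse one resting on a factorisation of $\varphi_1\varphi_2-\varphi_3$. Throughout I use the fact, extracted from the proof of Theorem \ref{mult1}, that for a single matrix $A$ one has $\mu(A)<1$ if and only if $\pi(A)\in\E$ (both say that $1-a_{11}z-a_{22}w+(\det A)zw$ has no zero in the closed bidisc). For (1)$\Rightarrow$(2) I would simply set $\varphi=\pi\circ F$. Then $\varphi$ is analytic from $\D$ to $\E$, and $\varphi(\la_k)=\pi(A_k)$ is immediate. When $A_k=[a_{ij}^k]$ is diagonal we have $F_{12}(\la_k)=F_{21}(\la_k)=0$, so differentiating $\varphi_3=\det F=F_{11}F_{22}-F_{12}F_{21}$ and using the product rule annihilates the off-diagonal terms at $\la_k$, leaving exactly $\varphi_3'(\la_k)=a_{22}^k\varphi_1'(\la_k)+a_{11}^k\varphi_2'(\la_k)$.

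For (2)$\Rightarrow$(1) I would try to lift the given $\varphi$ to the matrix function $F=\begin{bmatrix}\varphi_1 & f\\ g & \varphi_2\end{bmatrix}$, with $f,g$ analytic on $\D$ to be chosen. Writing $P=\varphi_1\varphi_2-\varphi_3$, the requirement $\pi\circ F=\varphi$ is precisely $fg=P$, and then $F(\la)\in\Sigma$ for every $\la$ because $\pi(F(\la))=\varphi(\la)\in\E$. Thus the whole problem reduces to factoring $P=fg$ subject to the interpolation data $f(\la_k)=a_{12}^k$, $g(\la_k)=a_{21}^k$; these are automatically consistent with $fg=P$, since $a_{12}^k a_{21}^k=a_{11}^k a_{22}^k-\det A_k=P(\la_k)$.

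The construction of $f,g$ is the heart of the argument. At a non-triangular node $P(\la_k)\neq0$ and both factors must be non-zero there, which leaves a free ratio to be fixed. At a triangular node $P(\la_k)=0$; let $m_k=\mathrm{ord}_{\la_k}P$. The role of the derivative hypothesis is to force $P$ to have a \emph{double} zero at each diagonal node: there $P'(\la_k)=a_{22}^k\varphi_1'(\la_k)+a_{11}^k\varphi_2'(\la_k)-\varphi_3'(\la_k)=0$, so $m_k\ge2$. I would then distribute the local vanishing of $P$ between the two factors — one zero to each factor at a diagonal node (possible since $m_k\ge2$), all $m_k$ zeros to $g$ at a strictly upper node and all to $f$ at a strictly lower node — and realise this by setting $f=B_f\,e^{\psi}$ and $g=B_g\,P_1\,e^{-\psi}$. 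Here $B_f,B_g$ are the finite Blaschke products carrying the assigned node zeros, $P_1=P/(B_fB_g)\in H^\infty$ is zero-free at the nodes, and $\psi$ is a polynomial interpolating the finitely many values $\log\!\bigl(a_{12}^k/B_f(\la_k)\bigr)$ (with the obvious strictly-lower analogue) dictated by the data. One then checks that $fg=B_fB_gP_1=P$ and that each prescribed value $f(\la_k)$ or $g(\la_k)$ comes out correctly, the complementary value being forced by $fg=P$ together with $P(\la_k)=a_{12}^k a_{21}^k$.

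The step I expect to be the real obstacle is the degenerate case $P\equiv0$, i.e. $\varphi$ triangular-valued, in which the factorisation above is unavailable. If the triangular data are consistently oriented — say every $a_{21}^k=0$ — one escapes by taking $g\equiv0$ and choosing $f$ to interpolate the $a_{12}^k$, so that $F$ is upper triangular with $\pi\circ F=\varphi$; the diagonal case falls under this too. The hard sub-case is when strictly upper and strictly lower nodes occur simultaneously while $P\equiv0$ is forced, for then $fg\equiv0$ cannot carry non-zero off-diagonal entries of both orientations. Here I would need a separate argument — either to show that such configurations fail to satisfy (2), or to perturb $\varphi_3$ within $\E$ (keeping the interpolation and derivative data, hence keeping $\varphi(\la_k)=\pi(A_k)$) so as to make $P\not\equiv0$ and then invoke the construction above. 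Keeping such a perturbation inside $\E$, where the available slack $(1-|\varphi_1|^2)(1-|\varphi_2|^2)$ can decay at the boundary, is exactly the delicate point on which the reverse implication turns, and is where I would concentrate the detailed work.
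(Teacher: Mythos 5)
Your two implications follow the paper's own proof: $(1)\Rightarrow(2)$ via $\varphi=\pi\circ F$ and the product rule is identical, and for $(2)\Rightarrow(1)$ the paper likewise lifts $\varphi$ to $F=\left[\begin{array}{cc}\varphi_1&f\\ g&\varphi_2\end{array}\right]$ by factoring $P=\varphi_1\varphi_2-\varphi_3$ as $fg$, with $g$ interpolating the $a_{21}^k$ and $f=P/g$. In the generic case $P\not\equiv 0$ your construction is correct, and is in fact more careful than the paper's: the paper demands that $g$ have a \emph{simple} zero at each node with $a_{21}^k=0$ and simultaneously that $g'(\la_k)=P'(\la_k)/a_{12}^k$ when $a_{12}^k\neq 0$, requirements that contradict each other whenever $P$ vanishes to order at least two at such a node; your assignment of $\mathrm{ord}_{\la_k}P$ zeros to the appropriate factor removes this defect.

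The sub-case you single out --- $P\equiv 0$ with strictly upper and strictly lower nodes both present --- is a genuine gap, and no amount of further work will close it: the equivalence is \emph{false} there, so both of your proposed escape routes are blocked. Take $\la_1=0$, $\la_2=\tfrac12$ and
\[
A_1=\left[\begin{array}{cc}0&1\\0&0\end{array}\right],\qquad
A_2=\left[\begin{array}{cc}\tfrac12&0\\1&0\end{array}\right],
\]
which lie in $\Sigma$ because $\pi(A_1)=(0,0,0)$ and $\pi(A_2)=(\tfrac12,0,0)$ lie in $\E$ and $A\in\Sigma$ if and only if $\pi(A)\in\E$. Condition (2) holds with $\varphi(\la)=(\la,0,0)$ (neither $A_k$ is diagonal, so the derivative clause is vacuous). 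Yet (1) fails: if $F:\D\to\Sigma$ satisfies $F(\la_k)=A_k$, then $F_{11}$ is an analytic self-map of $\D$ with $F_{11}(0)=0$ and $F_{11}(\tfrac12)=\tfrac12$, so $F_{11}(\la)=\la$ by Schwarz's lemma; putting $h=F_{12}F_{21}=F_{11}F_{22}-\det F$ and applying condition $(3')$ of Theorem \ref{characE} to $\pi(F(\la))\in\E$ gives
\[
\left|(1-|\la|^2)F_{22}(\la)+\bar\la h(\la)\right|+|h(\la)|<1-|\la|^2,
\]
whence $|h(\la)|(1+|\la|)<2(1-|\la|^2)$, i.e.\ $|h(\la)|<2(1-|\la|)$, and the maximum principle forces $h\equiv 0$; thus $F_{12}\equiv 0$ or $F_{21}\equiv 0$, contradicting $F_{12}(0)=1$ and $F_{21}(\tfrac12)=1$. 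The same rigidity shows that \emph{every} witness $\varphi$ of (2) in this example has $\varphi_1(\la)=\la$ and hence $P\equiv 0$, so the perturbation route cannot work either; and since (2) visibly holds, neither can the route of showing (2) fails. Note that the paper's own construction collapses at precisely this point: with $P\equiv 0$ and $a_{12}^1\neq 0$ its conditions (ii) and (iii) on $g$ are incompatible, and $f=P/g\equiv 0$ cannot take the value $a_{12}^1=1$. So the obstacle you located is a defect not of your write-up but of the theorem as printed; a correct statement must strengthen (2) so as to exclude this degenerate configuration, for instance by requiring either a witness $\varphi$ with $\varphi_1\varphi_2-\varphi_3\not\equiv 0$, or, when no such witness exists, that the strictly triangular matrices among the $A_k$ be all upper triangular or all lower triangular.
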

\begin{proof}
(1) $\Rightarrow$ (2) is easy, for we may take $\varphi = \pi \circ F$.  
Then $\ph_3 = \ph_1 \ph_2 - F_{12}F_{21}$ and $\ph_1(\la_k)=a_{11}^k, \, \ph_2(\la_k)=a_{22}^k.$
If $a_{12}^k = a_{21}^k = 0$ then $F_{12}(\la_k)=F_{21}(\la_k) = 0$ and
\begin{eqnarray*}
\ph_3^\prime (\la_k) &=& \ph_1^\prime \ph_2(\la_k) + \ph_1 \ph_2^\prime(\la_k) - F_{12}^\prime(\la_k)F_{21}(\la_k) - F_{12}(\la_k)F_{21}^\prime(\la_k)\\
	&=& a_{22}^k \ph_1^\prime(\la_k) + a_{11}^k \ph_2^\prime(\la_k).
\end{eqnarray*}

(2)$\Rightarrow$(1)  Let $\ph$ be as in (2), so that
\[
(\ph_1\ph_2 - \ph_3)(\la_k) = a_{11}^k a_{22}^k - \det A_k = a_{12}^k a_{21}^k
\]
and, if $a_{12}^k = a_{21}^k =0$, then $(\ph_1\ph_2 - \ph_3)^\prime(\la_k) =0.$
Choose an analytic function $g$ in $\D$ such that
\begin{enumerate}
\item[\rm(i)] $g(\la_k) = a_{21}^k, \, 1 \leq k \leq n$;
\item[\rm(ii)] $g$ has simple zeros at those $\la_k$ such that $a_{21}^k = 0$ and no other zeros;
\item[\rm(iii)] if $a_{21}^k = 0$ and $a_{12}^k \neq 0$ then 
$g^\prime (\la_k) = (\ph_1 \ph_2 - \ph_3)^\prime(\la_k)/a_{12}^k.$
\end{enumerate}
Let $f= (\ph_1 \ph_2 - \ph_3)/g$ and let
\[
F= \left[ \begin{array}{cc} \ph_1 & f \\ g & \ph_2 \end{array} \right] .
\]
$F$ is analytic in $\D$ and $\pi \circ F = \ph$, so that $F(\D) \subset \Sigma.$
Note that, if $A_k$ is diagonal, then $\ph_1 \ph_2 - \ph_3$ has a multiple zero at
$\la_k$ and $g$ has a simple zero, so that $f(\la_k) = 0 = a_{12}^k$.  If $a_{21}^k =0$ and $a_{12} \neq 0$ then L'Hopital's rule gives $f(\la_k) = a_{12}^k.$  Hence
$ F(\la_k) = A_k, \, 1 \leq k \leq n$.
\end{proof}
{\begin{Remark} \label{closures} \rm
(i) The theorem remains true if we replace $\Sigma$ and $\E$ by their closures.\\
(ii) Generically the target matrices are non-diagonal, in which case the interpolation problem for $\ph:\D \to \E$ does not involve conditions on $\ph^\prime$.\\
(iii) The problem of finding $F$ satisfying (1) in Theorem \ref{Eandsigma} is called the {\em structured Nevanlinna-Pick problem} in \cite{BFT0}.
\end{Remark}
On putting together Theorems \ref{schwarzL} and \ref{Eandsigma} we obtain a Schwarz lemma for $\bar\Sigma$.  
\begin{Theorem} \label{schwarzLSig}
Let $\la_0 \in \D \setminus \{0\}, \zeta \in \C$ and $ A_1, A_2 \in \C^{2\times 2}$, where  
\[
A_1 = \left[\begin{array}{cc} 0 & \zeta\\ 0&0 \end{array} \right]
\mbox{ or } \left[\begin{array}{cc} 0 & 0\\ \zeta & 0 \end{array} \right]
\mbox{ and  } \pi(A_2)=(a,b,p) \in \E.  
\]
There exists an analytic
$2\times 2$ matrix function $F$ such that 
$F(0) = A_1, \, F(\la_0) = A_2$ and $\mu(F(\la))\leq 1$ for all $\la \in \D$ if and only if
\begin{equation} \label{2cases}
\left\{\begin{array}{cl} \displaystyle
\max \left\{ \frac{|a-\bar b p|+|ab-p|}{1-|b|^2}, 
\frac{|b - \bar a p|+|ab-p|}{1-|a|^2} \right\} \leq |\la_0| & 
\mbox{ if } \zeta \neq 0\\ 
\displaystyle \left(\frac{a}{\la_0},\frac{b}{\la_0},\frac{p}{\la_0^2}\right) \in \bar\E &  \mbox{ if } \zeta = 0.
\end{array} \right . 
\end{equation}
\end{Theorem}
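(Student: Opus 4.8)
The plan is to transport the whole problem from $\bar\Sigma$ to the tetrablock via Theorem \ref{Eandsigma}, and then invoke the Schwarz lemma for $\bar\E$ (Theorem \ref{schwarzL}), treating $\zeta\neq0$ and $\zeta=0$ separately. The requirement $\mu(F(\la))\le1$ for all $\la$ is just the statement that $F:\D\to\bar\Sigma$, so by the closure version of Theorem \ref{Eandsigma} (Remark \ref{closures}(i)) the existence of an analytic $F:\D\to\bar\Sigma$ with $F(0)=A_1$ and $F(\la_0)=A_2$ is equivalent to the existence of an analytic $\ph:\D\to\bar\E$ with $\ph(0)=\pi(A_1)=(0,0,0)$ and $\ph(\la_0)=\pi(A_2)=(a,b,p)$, subject to the derivative conditions imposed at those interpolation points where the target matrix is diagonal. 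Since $A_2$ is non-diagonal, no condition is imposed at $\la_0$; and $A_1$ is diagonal precisely when $\zeta=0$, in which case (as $A_1=0$) the imposed condition reads $\ph_3'(0)=0$. This reduces the theorem to a statement about maps into $\bar\E$ only.

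When $\zeta\neq0$ there is no extra condition, so the question becomes exactly whether there is an analytic $\ph:\D\to\bar\E$ with $\ph(0)=(0,0,0)$ and $\ph(\la_0)=(a,b,p)$. That is condition (1) of Theorem \ref{schwarzL}, and its equivalence (1)$\Leftrightarrow$(2) delivers precisely the first alternative of (\ref{2cases}).

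When $\zeta=0$ I must show that an analytic $\ph:\D\to\bar\E$ with $\ph(0)=(0,0,0)$, $\ph_3'(0)=0$ and $\ph(\la_0)=(a,b,p)$ exists if and only if $(a/\la_0,b/\la_0,p/\la_0^2)\in\bar\E$. Sufficiency is constructive: given $y=(a/\la_0,b/\la_0,p/\la_0^2)\in\bar\E$, choose by Theorem \ref{closE}(7) a contraction $B$ with $\pi(B)=y$ and set $\ph(\la)=\pi(\la B)=(\la a/\la_0,\,\la b/\la_0,\,\la^2 p/\la_0^2)$; since $\mu(\la B)=|\la|\,\mu(B)\le1$ this maps $\D$ into $\bar\E$, its third coordinate has a double zero at $0$, and it interpolates correctly at $\la_0$, after which Theorem \ref{Eandsigma} returns the desired $F$. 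For necessity I exploit the homogeneity of $\mu$: any admissible $F$ has $F(0)=A_1=0$, and the crucial point is a Schwarz lemma for $\bar\Sigma$, namely that $F(0)=0$ forces $\mu(F(\la))\le|\la|$. Granting this, evaluation at $\la_0$ gives $\mu(A_2/\la_0)\le1$, so $A_2/\la_0\in\bar\Sigma$ and hence $\pi(A_2/\la_0)=(a/\la_0,b/\la_0,p/\la_0^2)\in\bar\E$, as required.

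The main obstacle is exactly this Schwarz lemma for $\bar\Sigma$. It is where the geometry of $\Sigma$ enters: because $\mu$ is absolutely homogeneous ($\mu(cA)=|c|\mu(A)$), the domain $\Sigma$ is balanced, and because $\Sigma=\pi^{-1}(\E)$ with $\E$ a domain of holomorphy (established after Theorem \ref{polyconvex}), the gauge $\mu$ is logarithmically plurisubharmonic. Consequently $\la\mapsto\log\mu(F(\la))$ is subharmonic on $\D$, is negative there, and, using $F(0)=0$, differs from $\log|\la|$ by a function bounded above near the origin; the maximum principle then yields $\mu(F(\la))\le|\la|$. A little care is needed at $\la=0$, where $\mu(F(\la))$ may vanish so that $\log\mu(F(\la))=-\infty$; this is harmless for subharmonic functions, the removable-singularity and maximum principles applying once one checks that $\log\mu(F(\la))-\log|\la|$ stays bounded above as $\la\to0$.
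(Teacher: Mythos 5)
Your proof is correct, and in the case $\zeta=0$ it follows a genuinely different --- and in fact sounder --- route than the paper's. For $\zeta\neq 0$ the two proofs coincide: Theorem \ref{Eandsigma} in its closure form (Remark \ref{closures}(i)) imposes no derivative conditions because neither target matrix is diagonal, and the equivalence (1)$\Leftrightarrow$(2) of Theorem \ref{schwarzL} yields the first alternative of (\ref{2cases}). For $\zeta=0$ the paper instead argues through norm-contractive interpolation: the desired $F$ exists if and only if some Schur-class $G$ has $G(\la_0)=A_2/\la_0$, if and only if $\|A_2/\la_0\|\le 1$, if and only if (Theorem \ref{closE}(7)) $\pi(A_2/\la_0)\in\bar\E$. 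Read literally, the first of these equivalences fails in the forward direction and the last in the backward direction, since $\mu(A)\le 1$ does not bound $\|A\|$ and $\pi$ is not injective: taking $A_2=\left[\begin{smallmatrix} 0 & M\\ 0 & 0 \end{smallmatrix}\right]$ with $M>|\la_0|$, the function $F(\la)=(\la/\la_0)A_2$ satisfies $F(0)=0$, $F(\la_0)=A_2$ and $\mu(F(\la))\equiv 0$, yet $\|A_2/\la_0\|>1$; the theorem itself survives because its actual criterion, $\pi(A_2/\la_0)=(0,0,0)\in\bar\E$, does hold here. Your two arguments are precisely the needed repairs. For sufficiency you choose a contraction $B$ with $\pi(B)=(a/\la_0,b/\la_0,p/\la_0^2)$ and pass $\ph(\la)=\pi(\la B)$, whose third component has a double zero at $0$, through Theorem \ref{Eandsigma}; this restores the exact matricial condition $F(\la_0)=A_2$, which the constant contraction $B$ by itself cannot give. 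For necessity you prove $\mu(F(\la))\le|\la|$ from $F(0)=0$: since $\Sigma=\pi^{-1}(\E)$ is balanced with gauge $\mu$ and is pseudoconvex (a holomorphic preimage of a domain of holomorphy), $\log\mu$ is plurisubharmonic, so $\log\mu(F(\la))$ is subharmonic, nonpositive, and --- because $\mu\le\|\cdot\|$ --- at most $\log|\la|+O(1)$ near $0$; the maximum principle then gives the bound, and Theorem \ref{mult1} applied to $A_2/\la_0$ concludes. What your route costs is two standard pluripotential-theoretic facts used nowhere else in the paper (holomorphic preimages of pseudoconvex domains are pseudoconvex; a balanced domain is pseudoconvex if and only if its gauge is logarithmically plurisubharmonic); what it buys is a complete proof of the necessity direction together with the Schwarz inequality $\mu(F(\la))\le|\la|$, which is of independent interest. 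Should you wish to stay within the paper's own toolkit, necessity can also be obtained elementarily: for fixed $z,w\in\D$ the function $\la\mapsto 1-\ph_1(\la)z/\la-\ph_2(\la)w/\la+\ph_3(\la)zw/\la^2$ is analytic on $\D$ (by the double zero of $\ph_3$) and zero-free on every circle $|\la|=r>\max(|z|,|w|)$, and an argument-principle count as $(z,w)$ is scaled to $(0,0)$ shows it is zero-free on all of $\D$; evaluating at $\la_0$ gives condition (0) of Theorem \ref{closE} for $(a/\la_0,b/\la_0,p/\la_0^2)$.
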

\begin{proof}
If $\zeta \neq 0$ then $A_1$ is not diagonal, and so by Theorem \ref{Eandsigma} and Remark \ref{closures}(i),  there is a function $F$ with the required properties if and only if there exists an
analytic function $\ph:\D \to \bar\E$ such that $\ph(0)= (0,0,0)$ and 
$\ph(\la_0) = (a,b,p)$.  By Theorem \ref{schwarzL} this is so if and only if the
first inequality in conditions (\ref{2cases}) holds.

If $\zeta =0$ then $A_1=0$ and the desired $F$ exists if and only if there is a function
$G$ in the $2\times 2$ Schur class such that $G(\la_0)=A_2/\la_0$, which is so if and only if $\|A_2/\la_0\| \leq 1$, and so, by condition (7) of Theorem \ref{closE}, if and only if 
$(a/\la_0,b/\la_0, p/\la_0^2) \in \bar\E$.
\end{proof}
{\begin{Remark} \label{muschw} \rm
(i)  This result is a solvability criterion for an extremely special type of 
2-point $\mu$-synthesis problem.  It falls far short of what control engineers would like to know, but it does reveal some of the analytic subtleties  of $\mu$-synthesis and may be a starting point for the solution of more general problems.  \\
(ii)  In control problems the interpolation conditions are typically ``tangential'', that is, of the forms $F(\la_j)x_j =y_j$ and $x_j^*F(\la_j)=y_j^*$ for suitable vectors $x_j, y_j$, rather than $F(\la_j)=A_j$ as studied here, but a solution of the general problem must of course include our type of constraint.\\
(iii) The condition that $(\frac{a}{\la_0},\frac{b}{\la_0},\frac{p}{\la_0^2}) \in \bar\E$
can  be written in terms of any of the criteria of Theorem \ref{closE}.  For example,
by condition (5), it is equivalent to
\[
|\la_0|^4 -(|a|^2+|b|^2+2|ab-p|) |\la_0|^2 +|p|^2 \ge 0
\]
and if $ab=p$ then $|a| + |b| \leq 2|\la_0|$.\\
(iv) Observe that a 2-point $\mu$-synthesis problem can be ill-conditioned.
 For example, if $(a,b,p) = (\tfrac 12, \tfrac 12, \tfrac 12)$, then there exists an analytic function $F_\zeta$ in $\D$ such that $\mu(F_\zeta(\la)) \leq 1$ for all $\la \in \D$ and
\[
F_\zeta(0) = A_1, \quad F_\zeta(\la_0) = A_2
\]
if and only if
\[
 |\la_0| \ge \left\{ \begin{array}{cl} \tfrac 23 & \mbox{ if } \zeta \neq 0 \\
	 & \\
	\tfrac 1{\sqrt{2} }& \mbox{ if } \zeta = 0. \end{array} \right.
\]
It follows that if  $\tfrac 23 < |\la_0| < \frac 1{\sqrt{2}}$  the $F_\zeta$ cannot be locally bounded as $\zeta \to 0$.  For such $\la_0$, if $\zeta$ is close to zero then the solutions of the interpolation problem are very sensitive to small changes in $\zeta$.  Any numerical method for the computation of solutions is likely to be unreliable for such data.\\
(v)  The proof shows how to construct a solution of a 2-point problem of the type in Theorem
\ref{schwarzLSig}, at least in the case that the inequality in conditions (\ref{2cases}) holds strictly.  For then, if $\zeta=0$, we may define $\ph =\pi\circ F$ where
$F(\la)= \la A_2/\la_0$, while if $\zeta \neq 0$ then we may take $\ph=\pi\circ F$
where $F$ is constructed according to the algorithm in Section \ref{schwarz}.
\end{Remark}
Similarly, by putting together Theorem \ref{Eandsigma} and Corollary \ref{schwpick} we obtain a partial Schwarz-Pick lemma for $\bar\Sigma$.
\begin{Theorem} \label{SPickSig}
Let $\la_1,\la_2$  be distinct points in $\D$, let $A, B$ be non-diagonal ${2\times 2}$ matrices such that 
$\mu(A)\leq 1, \mu(B)\leq 1$ and $A$ is triangular.  There exists an analytic $2\times 2$ matrix function $F$ on $\D$ such that $F(\la_1)=A, F(\la_2)=B$ and $\mu(F(\la))\leq 1$ for all $\la \in\D$ if and only if
\begin{eqnarray*}
  &\max &\left\{  \frac{(1-|x_1|^2)|y_3-y_1y_2|+|y_1-\bar y_2y_3-x_1(1+|y_1|^2-|y_2|^2-|y_3|^2)+x_1^2(\bar{y}_1-y_2\bar{y}_3)|}{|1-\bar x_1y_1|^2-|y_2-\bar x_1y_3|^2} , \right. \\
     &&\left.  \frac{(1-|x_2|^2)|y_3-y_1y_2|+|y_2-\bar y_1y_3-x_2(1-|y_1|^2+|y_2|^2-|y_3|^2)+x_2^2(\bar y_2-y_1\bar y_3)|}{|1-\bar{x}_2y_2|^2-|y_1-\bar{x}_2y_3|^2}   \right\}\\
 {} &&\leq \quad |d(\la_1,\la_2)|
\end{eqnarray*}
where $ \pi(A)=x, \pi(B)=y$.
\end{Theorem}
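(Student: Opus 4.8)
The plan is to reduce the $\mu$-synthesis interpolation problem for $\bar\Sigma$ to a two-point interpolation problem for $\bar\E$ and then to quote the explicit criterion already obtained in Corollary \ref{schwpick}. First I would invoke Theorem \ref{Eandsigma}, in the closed form guaranteed by Remark \ref{closures}(i): the existence of an analytic $F$ with $F(\la_1)=A$, $F(\la_2)=B$ and $\mu(F(\la))\leq 1$ throughout $\D$ is equivalent to the existence of an analytic $\ph:\D\to\bar\E$ with $\ph(\la_1)=\pi(A)=x$ and $\ph(\la_2)=\pi(B)=y$, together with the derivative side-conditions attached to those $\la_k$ at which the target matrix is diagonal. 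The whole point of assuming $A$ and $B$ non-diagonal is that both side-conditions are then vacuous, so the $\mu$-problem collapses to the bare interpolation problem $\ph(\la_1)=x,\ \ph(\la_2)=y$ for maps into $\bar\E$.

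Next I would observe that the remaining hypotheses translate exactly into the setting of Corollary \ref{schwpick}. By Theorem \ref{mult1} the conditions $\mu(A)\leq 1$ and $\mu(B)\leq 1$ give $x,y\in\bar\E$, and since $A$ is triangular we have $\det A=a_{11}a_{22}$, so $x_1x_2=x_3$; that is, $x$ is a triangular point in the sense of Definition \ref{triangular}. Corollary \ref{schwpick} then asserts that an analytic $\ph:\D\to\bar\E$ carrying $\la_1,\la_2$ to $x,y$ exists if and only if the displayed maximum is at most $|d(\la_1,\la_2)|$, which is precisely the inequality in the statement (the pseudohyperbolic term arising from moving $\la_1,\la_2$ to $0,d(\la_1,\la_2)$ by an automorphism of $\D$). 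Combining this equivalence with the reduction of the previous paragraph finishes the proof.

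The one point that needs care, and which I expect to be the main obstacle, is the open-versus-closed discrepancy. Corollary \ref{schwpick} (and, behind it, Theorem \ref{schwarzL}) is phrased for interior data $x,y\in\E$, whereas $\mu(A)\leq 1$ and $\mu(B)\leq 1$ place $x,y$ only in $\bar\E$. When $|x_1|<1$ and $|x_2|<1$ the normalizing automorphisms $\up,\chi$ of equation (\ref{upchi}) used in the proof of Corollary \ref{schwpick} are genuine elements of $\Aut\D$, the formulae for $y'=\up\cdot y\cdot\chi$ go through verbatim, and one recovers the stated inequality after substituting the expressions for $y'_1-\bar y'_2y'_3$, $1-|y'_2|^2$ and their symmetric analogues into the criterion (\ref{criter}). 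When instead $|x_1|=1$ or $|x_2|=1$ (which $\mu(A)\leq 1$ does permit for a non-diagonal triangular $A$) the normalization degenerates, and here I would fall back on a limiting argument: replace $x$ and $y$ by nearby interior points that remain triangular and satisfy the hypotheses, apply the interior case, and pass to the limit by Montel's theorem exactly as in the final paragraph of the proof of Theorem \ref{schwarzL}. Verifying that $d(\la_1,\la_2)$ and the one-sided limits of the displayed quotients behave continuously across this passage is the only genuinely delicate bookkeeping.
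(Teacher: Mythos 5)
Your first two paragraphs are precisely the paper's own proof: the paper disposes of this theorem in a single sentence, ``by putting together Theorem \ref{Eandsigma} and Corollary \ref{schwpick}'', i.e.\ the reduction through Remark \ref{closures}(i) (non-diagonality of $A$ and $B$ making the derivative side-conditions vacuous), followed by the two-point criterion for $\bar\E$ with one triangular point, exactly as you set it out.

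The open-versus-closed discrepancy you flag in your third paragraph is genuine, but it is an imprecision in the paper's \emph{statement} rather than a step its proof addresses: Corollary \ref{schwpick} requires $x,y\in\E$, while $\mu(A)\le 1$, $\mu(B)\le 1$ only give $x,y\in\bar\E$; the paper silently reads the hypotheses as interior (compare Theorem \ref{schwarzLSig}, where $\pi(A_2)\in\E$ is written explicitly). Be warned, however, that your proposed Montel repair does not work as described. For the ``if'' direction the displayed criterion is a non-strict (closed) inequality, so perturbing boundary data $x,y$ to nearby interior points can destroy it, and you cannot simply ``apply the interior case'' to the perturbed data; and the patch says nothing about the ``only if'' direction, where Montel's theorem is of no use. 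Worse, in genuinely degenerate cases the criterion is not even well defined: if $|x_1|=1$, then any analytic $\ph:\D\to\bar\E$ with $\ph(\la_1)=x$ has $\ph_1$ constant by the maximum principle, and condition (3$^\prime$) of Theorem \ref{closE} forces $\ph_2=\bar x_1\ph_3$; hence $y_1=x_1$ and $y_2=\bar x_1 y_3$, and a direct computation shows that the numerator and denominator of the first quotient both vanish, so the inequality reads $0/0\le d(\la_1,\la_2)$. The correct course is therefore not a limiting argument but the interior reading of the hypotheses, $\pi(A),\pi(B)\in\E$; with that reading your first two paragraphs constitute a complete proof and coincide with the paper's.
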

One can derive a somewhat more complicated criterion in the case of diagonal $A$.

  Bercovici, Foia\c{s} and Tannenbaum \cite{BFT0} use operator-theoretic methods to study a much more general $\mu$-synthesis problem than the special cases in Theorems \ref{schwarzLSig} and \ref{SPickSig}, but they obtain a less detailed result.  For the purpose of comparison we shall state their result, specialised to the situation we are studying here ($2\times 2$-matrix functions, $\mu=\mu_\mathrm{Diag}$).

Suppose we are given distinct points $\la_1, \dots, \la_n \in \D$ and $2\times 2$ matrices $A_1, \dots, A_n$.  For any analytic function $F:\D\to\C^{2\times 2}$ let
\[
 \mu^\infty (F) = \sup_{z\in\D} \mu(F(z)).
\]
We seek to minimise $\mu^\infty(F)$ over all analytic interpolating functions $F$; the formula is in terms of operators.  Let $k_\la$ be the Szeg\H{o} kernel:
\[
k_\la(z) = \frac{1}{1-\bar\la z}, \quad   z\in  \D,
\]
let $H^2$ denote the Hardy space on the disc and let
\[
\mathcal{M} = \mathrm{span}~\{k_{\la_1} \otimes \xi_1, \dots, k_{\la_n} \otimes \xi_n: \xi_1,\dots,\xi_n \in \C^2\},
\]
which is a $2n$-dimensional subspace of the Hilbert space $H^2\otimes\C^2$.  Corresponding to $2\times 2$ matrices $F_1,\dots,F_n$ we define a linear operator $A(F_1, \dots, F_n)$ on $\mathcal{M}$ by
\[
A(F_1,\dots,F_n)^* k_{\la_j} \otimes \xi = k_{\la_j} \otimes  F_j^*\xi.
\]
 Theorem 5 of  \cite{BFT0} states the following.
  {\em The infimum of $\mu^\infty(F)$ over all bounded rational analytic $2\times 2$ functions $F$ on $\D$ such that 
\[
F(\la_j) = A_j, \quad 1 \leq j \leq n,
\]
is equal to
\[
\inf \{ ||A(D_1A_1D_1^{-1}, \dots D_nA_nD_n^{-1})|| : D_1, \dots,D_n \in \mathrm{Diag}\cap GL_2(\C) \}.
\]}
This result gives the infimum over the infinite-dimensional set of $F$s in terms of an infimum over an $n$-dimensional set of $D$s; existing packages for the numerical solution of $\mu$-synthesis problems work by attempting to solve this $n$-dimensional (non-convex, unbounded) optimization problem. 
Note, however, that there is no assertion as to whether the infima are attained.

Both this paper and \cite{BFT0} seek to reduce $\mu$-synthesis problems to classical Nevanlinna-Pick problems, in one case via the introduction of $\E$ and the geometry of the Cartan domain $R_I(2,2)$, in the other directly by diagonal scaling.  We believe the two approaches complement each other, and that there is scope for further progress on $\mu$-synthesis problems through a study of $\E$ and possibly higher-dimensional analogues.

\vspace*{0.5cm}

{\em Added in proof:} Some of the questions raised in this paper are answered in the eprint ``The automorphism group of the tetrablock'', arXiv:0708.0689.  The automorphisms described in Section \ref{autos} do indeed comprise all automorphisms of $\E$, so that Conjecture \ref{allautos} is true.  It is also shown that $\E$ is not an analytic retract of $R_I(2,2)$ or $R_{II}(2)$.

\vspace*{0.5cm}

\nin {A. A. Abouhajar,
 School of Mathematics and Statistics,
Newcastle University,
Newcastle upon Tyne NE1 7RU,
  England}

\vspace*{0.5cm}

\nin {M. C. White,
   School of Mathematics and Statistics,
Newcastle University,
Newcastle upon Tyne NE1 7RU,
  England}

\vspace*{0.5cm}

\nin {N. J. Young,
School of Mathematics,
Leeds University,
Leeds LS2 9JT,
England}

\end{document}